\newcommand{\homotopy}[3] 
{
	\xymatrix{{#1} \ar@2{~>}@<0.25ex>[r]^{{#2}} & {#3}}
}
\newcommand{\Ho}{\cc{H}{{o}}(\cc{C},\Sigma)}
\newcommand{\HoC}{\cc{H}{{o}}(\cc{C})}
\newcommand{\Wofc}{\cc{H}{{o}}_{fc}(\cc{C},\cc{W})}
\newcommand{\Wo}{\cc{H}{{o}}(\cc{C},\cc{W})}
\newcommand{\HoA}{\cc{H}{{o}}(\cc{A},\Sigma)}
\numberwithin{equation}{section}
\newlength\Colsep
\theoremstyle{definition}
        \newtheorem{remark}[equation]{Remark}
        \newtheorem{sinnadastandard}[equation]{\textcolor{white}{-}\hspace{-.3cm}}
	\newtheorem{notation}[equation]{Notation}
\theoremstyle{plain}
        \newtheorem{theorem}[equation]{Theorem}
        \newtheorem{lemma}[equation]{Lemma}
        \newtheorem{proposition}[equation]{Proposition}
        \newtheorem{corollary}[equation]{Corollary}
	    \newtheorem{definition}[equation]{Definition}
\newcommand{\cqd}{\hfill$\Box$}  %fin de demostracion%
\newcommand{\circlearrow}{}% just in case
\DeclareRobustCommand{\circlearrow}{%
  \mathrel{\vphantom{\rightarrow}\mathpalette\circle@arrow\relax}%
}
\newcommand{\circle@arrow}[2]{%
  \m@th
  \ooalign{%
    \hidewidth$#1\circ\mkern1mu$\hidewidth\cr
    $#1\longrightarrow$\cr}%
}
\newcommand{\xRightarrow}[2][]{\ext@arrow 0359\Rightarrowfill@{#1}{#2}}
\newcommand{\xLeftarrow}[2][]{\ext@arrow 0359\Leftarrowfill@{#1}{#2}}
\newcommand{\mr}[1]{\stackrel{#1}{\longrightarrow}}
\newcommand{\ml}[1]{\stackrel{#1}{\longleftarrow}}
\newcommand{\meq}[1]{\stackrel{#1}{=}}
\newcommand{\mrr}[2] 
 {
  \xymatrix@C=4.5ex@R=2.4ex
         {
          {} \ar@<1.2ex>[r]^{#1}
             \ar@<-1.1ex>[r]^{#2}
          & {}
         }
 }
\newcommand{\mrl}[2] 
  {
  \xymatrix@C=4.5ex@R=2.4ex
         {
          {} \ar@<1.2ex>[r]^{#1}             
          & {} \ar@<1.1ex>[l]_{#2}
         }
 }
\newcommand{\Mr}[1]{\stackrel{#1}{\Rightarrow}}
\newcommand{\Mrl}[1]{\stackrel{#1}{\Longrightarrow}}
\newcommand{\Ml}[1]{\stackrel{#1}{\Leftarrow}}
\newcommand{\xr}[1]{\xrightarrow{#1}}
\newcommand{\Xr}[1]{\xRightarrow{#1}}
\newcommand{\mvr}[1]{\xymatrix{{} \ar@{~>}[r]^{#1} & {}}}
\newcommand{\cc}[1]{\mathcal{#1}}
\newcommand{\C}{\mathcal{C}}
\newcommand{\eps}{\varepsilon}
\newcommand{\ff}{\mathsf}
\newcommand{\nn}{\mathbf}
\newcommand{\comw}{\textcolor{white}}
\newcommand{\adj}[2]
{
\ar@/^1ex/[r]^{{#1}}
\ar@{}[r]|{\bot}
\ar@/_1ex/@{<-}[r]_{{#2}} 
}
\newcommand{\jda}[2]{
\ar@/^1ex/[r]^{#1}
\ar@{}[r]|{\top}
\ar@/_1ex/@{<-}[r]_{#2} }
\newcommand{\adjbis}[2]{
\ar@/^2ex/[r]^{{#1}}
\ar@{}[r]|{\bot}
\ar@/_2ex/@{<-}[r]_{{#2}} }
\newcommand{\jdabis}[2]{
\ar@/^2ex/[r]^{#1}
\ar@{}[r]|{\top}
\ar@/_2ex/@{<-}[r]_{#2} }
\newcommand{\idavuelta}[2]{
\ar@/^1ex/[r]^{#1}
\ar@/_1ex/@{<-}[r]_{#2} }
\newcommand{\hpy}
           {
            \hspace{-1ex}
            \xymatrix{ {} \ar@2{~>}@<0.25ex>[r] & {} }
            \hspace{-0.6ex}
            }
\newcommand{\mrhpy}[1]
            {\stackrel{#1}{\hpy}}
\newcommand \germ{\underset{g}{\thicksim}}
\newcommand{\dcell}[1]  % celda down %
          {
		   \ar@<8pt>@{-}[d]+<-4pt,8pt>
           \ar@<-8pt>@{-}[d]+<4pt,8pt>
           \ar@{}[d]|{#1}
          }
\newcommand{\dcellb}[1]   % celda ancha down %
          {
           \ar@<10pt>@{-}[d]+<-5pt,8pt>
           \ar@<-10pt>@{-}[d]+<5pt,8pt>
           \ar@{}[d]|{#1}
          }
\newcommand{\dcellbbis}[1]   % celda ancha down %
{
	\ar@<15pt>@{-}[d]+<-5pt,8pt>
	\ar@<-15pt>@{-}[d]+<5pt,8pt>
	\ar@{}[d]|{#1}
}
\newcommand{\dcellbymedio}[1]   % celda mas ancha pero no tanto como bb down %
          {
           \ar@<15pt>@{-}[d]+<-7.5pt,10pt>
           \ar@<-15pt>@{-}[d]+<7.5pt,10pt>
           \ar@{}[d]|{#1}          
          }
\newcommand{\dcellbis}[1]   % celda mas ancha pero no tanto como bb down %
          {
           \ar@<15pt>@{-}[d]+<-7.5pt,0pt>
           \ar@<-15pt>@{-}[d]+<7.5pt,0pt>
           \ar@{}[d]|{#1}          
          }          
\newcommand{\dcellbymediobis}[1]   % celda mas ancha pero no tanto como bb down, otra version %
          {
           \ar@<15pt>@{-}[d]+<-7.5pt,-5pt>
           \ar@<-15pt>@{-}[d]+<7.5pt,0pt>
           \ar@{}[d]|{#1}          
          }
\newcommand{\deq}        % identidad down %
         {
          \ar@{=}[d]
         }
\newcommand{\ddeq}{\ar@{=}[dd]}
\newcommand{\dddeq}{\ar@{=}[ddd]}         
\newcommand{\ddddeq}{\ar@{=}[dddd]}         
\newcommand{\dddddeq}{\ar@{=}[ddddd]}                  
\newcommand{\ddddddeq}{\ar@{=}[dddddd]}                           
\newcommand{\dddddddeq}{\ar@{=}[ddddddd]}         
\newcommand{\ddddddddeq}{\ar@{=}[dddddddd]}         
\newcommand{\dddddddddeq}{\ar@{=}[ddddddddd]}                  
\newcommand{\ddddddddddeq}{\ar@{=}[dddddddddd]}                  
\newcommand{\dddddddddddeq}{\ar@{=}[ddddddddddd]}                  
\newcommand{\dreq}       % identidad down right %
         {
          \ar@{=}[dr]
         }
\newcommand{\dleq}       % identidad down left %
         {
          \ar@{=}[dl]
         }
\newcommand{\dccell}[1]    % celda de uno a dos lugares down %
          {
           \ar@{-}[ld]
           \ar@{-}[rd]
           \ar@{}[d]|{#1}
          }
\newcommand{\dcellbb}[1]   % celda mas ancha down %
          {
           \ar@<20pt>@{-}[d]+<-10pt,12pt>
           \ar@<-20pt>@{-}[d]+<10pt,12pt>
           \ar@{}[d]|{#1}
          }
\newcommand{\dl}    % una raya para poner a la izquierda (inclinada) %
          {
           \ar@<-2pt>@{-}[d]+<4pt,8pt>
          }
\newcommand{\dr}    % una raya para poner a la derecha (inclinada) %
          {
           \ar@<2pt>@{-}[d]+<-4pt,8pt>
          }
\newcommand{\drbis}    % una raya para poner a la derecha (inclinada) usada para chamullar un dcellbb%
          {
           \ar@<-2pt>@{-}[d]+<-4pt,8pt>
          }
\newcommand{\drmediobis}    % una raya para poner a la derecha (inclinada) usada para chamullar un dcellbb%
          {
           \ar@<-1pt>@{-}[d]+<-4pt,8pt>
          }
\newcommand{\dc}[1]    % para poner el label en el centro %
          {
           \ar@{}[d]|{#1}
          }
\newcommand{\dcr}[1]    % para poner el label en el centro a la derecha para tamanos impares%
          {
           \ar@{}[dr]|{#1}
          }
\newcommand{\dcl}[1]    % para poner el label en el centro a la izquierda para tamanos impares%
          {
           \ar@{}[dl]|{#1}
          }
\newcommand{\uccell}[1]      % celda de uno a dos lugares up %
          {
           \ar@{-}[ur]
           \ar@{}[u]|{#1}
           \ar@{-}[ul]
          }
\newcommand{\uccellb}[1]     % celda ancha de uno a dos lugares up %
          {
           \ar@<-1ex>@{-}[ur]
           \ar@{}[u]|{#1}
           \ar@<1ex>@{-}[ul]
          }
\newcommand{\dcellopa}[1]  % celda down abriendo angosta %
          {
		   \ar@<5pt>@{-}[d]+<5pt,8pt>
           \ar@<-5pt>@{-}[d]+<-5pt,8pt>
           \ar@{}[d]|{#1}
          }
\newcommand{\dcellop}[1]  % celda down abriendo %
          {
		   \ar@<6pt>@{-}[d]+<6pt,8pt>
           \ar@<-6pt>@{-}[d]+<-6pt,8pt>
           \ar@{}[d]|{#1}
          }
\newcommand{\dcellopb}[1]  % celda down abriendo mas ancha%
          {
					 \ar@<7pt>@{-}[d]+<7pt,8pt>
           \ar@<-7pt>@{-}[d]+<-7pt,8pt>
           \ar@{}[d]|{#1}
          }
\newcommand{\dcellopbb}[1]  % celda down abriendo aun mas ancha%
          {
					 \ar@<8pt>@{-}[d]+<8pt,8pt>
           \ar@<-8pt>@{-}[d]+<-8pt,8pt>
           \ar@{}[d]|{#1}
          }
\newcommand{\did}{\ar@2{-}[d]}
\newcommand{\ddid}{\ar@2{-}[dd]}
\newcommand{\dig}{ \ar@2{-}[d] & & }
\newcommand{\op}[1]
          {
           \ar@{-}[ld]
           \ar@{-}[rd]
           \ar@{}[d]|{#1}
          }
\newcommand{\opbis}[1]
          {
           \ar@{-}[ld]
           \ar@{-}[rd]
           \ar@{}[d]|>>>>{#1}
          }
\newcommand{\opb}[1]
          {
           \ar@<-2pt>@{-}[ld]%+<-2pt,0pt>
           \ar@<2pt>@{-}[rd]%+<-2pt,3pt>
           \ar@{}[d]|{#1}
          }        
\newcommand{\opmediob}[1]
          {
           \ar@<-1pt>@{-}[ld]%+<-2pt,0pt>
           \ar@<1pt>@{-}[rd]%+<-2pt,3pt>
           \ar@{}[d]|{#1}
          }  
\newcommand{\opbymedio}[1]
          {
           \ar@<-3pt>@{-}[ld]%+<-2pt,0pt>
           \ar@<3pt>@{-}[rd]%+<-2pt,3pt>
           \ar@{}[d]|{#1}
          }     
\newcommand{\opbb}[1]
          {
           \ar@<-4pt>@{-}[ld]%+<-2pt,0pt>
           \ar@<4pt>@{-}[rd]%+<-2pt,3pt>
           \ar@{}[d]|{#1}
          }               
\newcommand{\opbbb}[1]
          {
           \ar@<-6pt>@{-}[ld]%+<-2pt,0pt>
           \ar@<6pt>@{-}[rd]%+<-2pt,3pt>
           \ar@{}[d]|>>{#1}
          } 
\newcommand{\opbbbbis}[1]
          {
           \ar@<-6pt>@{-}[ld]%+<-2pt,0pt>
           \ar@<6pt>@{-}[rd]%+<-2pt,3pt>
           \ar@{}[d]|<<<<<{#1}
          } 
\newcommand{\opunodos}[1]
          {
           \ar@{-}[ld]
           \ar@{-}[rrd]
           \ar@{}[dr]|{#1}
          }
\newcommand{\opunodosb}[1]
          {
           \ar@<-2pt>@{-}[ld]
           \ar@<2pt>@{-}[rrd]
           \ar@{}[dr]|{#1}
          }
\newcommand{\opdosuno}[1]
          {
           \ar@{-}[lld]
           \ar@{-}[rd]
           \ar@{}[d]|{#1}
          }          
\newcommand{\opdosdos}[1]
          {
           \ar@{-}[lld]
           \ar@{-}[rrd]
           \ar@{}[d]|{#1}
          }    
\newcommand{\opdostres}[1]
          {
           \ar@{-}[lld]
           \ar@{-}[rrrd]
           \ar@{}[d]|{#1}
          }    
\newcommand{\optresuno}[1]
          {
           \ar@{-}[llld]
           \ar@{-}[rd]
           \ar@{}[d]|{#1}
          }   
\newcommand{\optresdos}[1]
          {
           \ar@{-}[llld]
           \ar@{-}[rrd]
           \ar@{}[d]|{#1}
          }    
\newcommand{\optrestres}[1]
          {
           \ar@{-}[llld]
           \ar@{-}[rrrd]
           \ar@{}[d]|{#1}
          }            
\newcommand{\opcincocinco}[1]
          {
           \ar@{-}[llllld]
           \ar@{-}[rrrrrd]
           \ar@{}[d]|{#1}
          }                      
\newcommand{\clputo}[1]
          {
           \ar@{-}[ur]
           \ar@{}[u]|{#1}
           \ar@{-}[ul]
          }
\newcommand{\clold}[1]
          {
           \ar@{-}[ur]
           \ar@{}[u]|{#1}
           \ar@{-}[ul]
          }
\newcommand{\clb}[1]
          {
           \ar@<-1ex>@{-}[ur]
           \ar@{}[u]|{#1}
           \ar@<1ex>@{-}[ul]
          }
\newcommand{\clbb}[1]
          {
           \ar@<-2ex>@{-}[ur]
           \ar@{}[u]|{#1}
           \ar@<2ex>@{-}[ul]
          }
\newcommand{\clmediob}[1]
          {
           \ar@<-.5ex>@{-}[ur]
           \ar@{}[u]|{#1}
           \ar@<.5ex>@{-}[ul]
          } 
\newcommand{\clrightb}[1]
          {
           \ar@<-1ex>@{-}[ur]
           \ar@{}[u]|{#1}
           \ar@{-}[ul]
          }
\newcommand{\clunodos}[1]
          {
           \ar@{-}[urr]
           \ar@{}[u]|{#1}
           \ar@{-}[ul]
          }
\newcommand{\cldosuno}[1]
          {
           \ar@{-}[ur]
           \ar@{}[u]|{#1}
           \ar@{-}[ull]
          }
\newcommand{\cldosdos}[1]
          {
           \ar@{-}[urr]
           \ar@{}[u]|{#1}
           \ar@{-}[ull]
          }         
\newcommand{\cltresdos}[1]
          {
           \ar@{-}[urr]
           \ar@{}[u]|{#1}
           \ar@{-}[ulll]
          }           
\newcommand{\cldostres}[1]
          {
           \ar@{-}[urrr]
           \ar@{}[u]|{#1}
           \ar@{-}[ull]
          }     
\newcommand{\cltrestres}[1]
          {
           \ar@{-}[urrr]
           \ar@{}[u]|{#1}
           \ar@{-}[ulll]
          }            
\newcommand{\clcincocinco}[1]
          {
           \ar@{-}[urrrrr]
           \ar@{}[u]|{#1}
           \ar@{-}[ulllll]
          }
\newcommand{\ardr}{\ar@{-}[dr]}
\newcommand{\ardrr}{\ar@{-}[drr]}
\newcommand{\ardrrr}{\ar@{-}[drrr]}
\newcommand{\ardrrrr}{\ar@{-}[drrrr]}
\newcommand{\ardl}{\ar@{-}[dl]}
\newcommand{\ardll}{\ar@{-}[dll]}
\newcommand{\ardlll}{\ar@{-}[dlll]}
\newcommand{\ardllll}{\ar@{-}[dllll]}
\newcommand{\ardlllll}{\ar@{-}[dlllll]}
\newcommand{\cellrdE}[3] 
{\xymatrix@C=7ex@R=2.4ex
        {
		\ar@<1.9ex>[r]^{#1} 
		\ar@{}@<-1.3ex>[r]^{\!\! {#2} \, \!\Downarrow}
		\ar@<-1.1ex>[r]_{#3} & 
		}
}
\newcommand{\cellrdEcorta}[3] 
{\xymatrix@C=5ex@R=2.4ex
       {
		\ar@<1.6ex>[r]^{#1} 
		\ar@{}@<-1.3ex>[r]^{\!\! {#2} \, \!\Downarrow}
		\ar@<-1.1ex>[r]_{#3} & 
	   }
}
\newcommand{\cellrd}[3] % flechas para adelante, cell para abajo %
 {
  \xymatrix@C=5ex@R=2.4ex
         {
          {} \ar@<1.1ex>[r]^{#1}
             \ar@{}@<-1.3ex>[r]^{\Downarrow \; {#2}}
             \ar@<-1.2ex>[r]_{#3}
          & {}
         }
}
\newcommand{\cellrdb}[3] % flechas para adelante, cell para abajo %
 {
  \xymatrix@C=7ex@R=2.4ex
         {
          {} \ar@<1.4ex>[r]^{#1}
             \ar@{}@<-1.3ex>[r]^{\Downarrow \; {#2}}
             \ar@<-1.1ex>[r]_{#3}
          & {}
         }         
 }
 \newcommand{\scellrd}[3] % mas cortas flechas para adelante, cell para abajo %
 {
  \xymatrix@C=4.5ex@R=2.4ex
         {
          {} \ar@<1.4ex>[r]^{#1}
             \ar@{}@<-1.3ex>[r]^{\!\! \Downarrow \, {#2}}
             \ar@<-1.1ex>[r]_{#3}
          & {}
         }
}
 \newcommand{\modif}[3] % cells para adelante, flecha para abajo %
 {
  \xymatrix@C=7ex@R=2.4ex
         {
          {} \ar@<1.6ex>@{=>}[r]^{#1}
             \ar@{}@<-1.3ex>@{=>}[r]^{\!\! {#2} \, \!\downarrow}
             \ar@{}@<-1.1ex>[r]_{#3}
          & {}
         }
 }
\newcommand{\cellld}[3] % flechas para atras, cell para abajo %
 {
  \xymatrix@C=6ex@R=2.4ex
         {
            {}
          & {} \ar@<1.0ex>[l]^{#3}
          \ar@{}@<-1.7ex>[l]^{\!\! {#2} \, \!\Downarrow}
	                                 \ar@<-1.7ex>[l]_{#1}
         }
 }
\newcommand{\cellpairrd}[4] % como \cellrd pero con dos cells %
 {
  \xymatrix@C=8ex@R=2.2ex
         {
          {} \ar@<1.6ex>[r]^{#1}
             \ar@{}@<-1.3ex>[r]^{\!\! \Downarrow \, {#2} 
                                 \;\;\; \Downarrow \, {#3} }
             \ar@<-1.1ex>[r]_{#4}
          & {}
         }
 }
\newcommand{\clcputo}{\dcell} 
\newcommand{\clcc}{\dcellb}
\newcommand{\cla}{\dcellopa}
\newcommand{\eqq}{\deq}    % identidad down %
\newcommand{\dla} % una raya menos a la izquierda inclinada %
          {
           \ar@<1pt>@{-}[d]+<4pt,8pt>
          }
\newcommand{\dlcopia} % una raya izquierda inclinada %
          {
           \ar@<-2pt>@{-}[d]+<4pt,8pt>
          }
\newcommand{\drcopia}  % una raya derecha coinclinada %
          {
           \ar@<2pt>@{-}[d]+<-4pt,8pt>
          }
\newcommand{\dll} % una raya mas a la izquierda inclinada % 
          {
           \ar@<-3pt>@{-}[d]+<4pt,8pt>
          }
\newcommand{\drr} % una raya mas ala derecha coinclinada %
          {
           \ar@<3pt>@{-}[d]+<-4pt,8pt>
          }
\newcommand{\dccopia}[1]    % para poner el label en el centro %
          {
           \ar@{}[d]|{#1}
          }
\newcommand{\dcrcopia}[1]    % para poner el label en el centro a la derecha para tamanos impares%
          {
           \ar@{}[dr]|{#1}
          }
\newcommand{\dclcopia}[1]    % para poner el label en el centro a la izquierda para tamanos impares%
          {
           \ar@{}[dl]|{#1}
          }
\newcommand{\dcla}[1]    % para poner el label en el centro un poco menos a la izquierda %
          {
           \ar@<-2pt>@{}[d]|{#1}
          }
\newcommand{\vc}{\vcenter}
\newcommand{\eq}[1]   
         {
          \ar@<#1pt>@{=}[d]
         } 
\newcommand{\sd}[2]  
          {
           \ar@<#1pt>@{-}[d]+<#2pt,8pt>
          }
\newcommand{\na}[2]  
          {
           \ar@<#1pt>@{}[d]|{#2}
          }
\newcommand{\cl}[2]            {
		   \ar@<#1pt>@{-}[d]+<-4pt,8pt>
           \ar@<-#1pt>@{-}[d]+<4pt,8pt>
           \ar@{}[d]|{#2}
          }
\newcommand{\hs}[1]{\hspace{#1pt}}
\newcommand{\gi}{id}%{\iota\delta}
\begin{document}

\title{Model bicategories and their homotopy bicategories}

\author{Descotte M.E., Dubuc E.J., Szyld M.}

\date{\vspace{-5ex}}

\maketitle

\begin{abstract}
	We give the definitions of model bicategory and $q$-homotopy, which are natural generalizations of the notions of model category and homotopy to the context of bicategories. 
For any model bicategory $\mathcal{C}$, denote by $\mathcal{C}_{fc}$ the full sub-bicategory of the fibrant-cofibrant objects. We prove that the 2-dimensional localization of $\mathcal{C}$ at the weak equivalences can be computed as a bicategory $\mathcal{H}{{o}}(\mathcal{C})$ whose objects and arrows are those of $\mathcal{C}_{fc}$ and whose 2-cells are classes of $q$-homotopies up to an equivalence relation. {When considered for a model category, $q$-homotopies coincide with the homotopies as considered by Quillen.} 
The pseudofunctor $\mathcal{C} \stackrel{q}{\longrightarrow} \mathcal{H}{{o}}(\mathcal{C})$ which yields the localization is constructed by using a notion of fibrant-cofibrant replacement in this context.
{We include an appendix with a general result of independent interest on a transfer of structure for lax functors, that we apply to obtain a pseudofunctor structure for the fibrant-cofibrant replacement.}      
\end{abstract}

\section{Introduction}\label{intro} 

{
The notion of model category, originally introduced in \cite{Quillen}, is the basis for a great deal of modern homotopy theory. 
A basic feature of a model category is that it allows the construction of its  localization at a class of arrows, the \emph{weak equivalences}, as a quotient by the congruence determined by \emph{homotopies} between arrows. 
The lifting properties relating the three distinguished families of arrows of a model category are at the very core of the theory. Also, on its own, lifting properties are important in category and higher category theory, perhaps most notably in the definition of quasi-categories \cite{JoyalSS}, or $\infty$-categories \cite{Lurie}.}

On the other hand, basic 2-dimensional categorical structures such as 2-categories and bicategories \cite{Ben} have extensively been used since the '60s.
{In this context, given a category, it is natural to explore the existence of a higher structure which collapses into the category when we identify the arrows having a 2-cell between them.
One may think of these \mbox{2-cells} as being {\em hidden} behind the equality of arrows of the category, and making them explicit allows to study their structure.}
{It is then natural to think of Quillen's homotopies as arrows between morphisms rather than as generators of an equivalence relation. This leads to consider a notion of model 2-category or model bicategory, where the diagrams in the axioms defining a model category are required to commute up to invertible 2-cells.}
As far as we know, this idea first appeared for 2-categories in print as an open question in \mbox{\cite[Problem 8.1]{Hovey}.} 
A set of axioms for a \mbox{2-category} $\cc{C}$ to be a model 2-category is given in \cite{TesisEmi}, and more recently equivalent axioms are considered in \mbox{\cite[Chapter 7]{TesisBarton}} 
(for further information regarding their motivation for considering such a notion, we refer the reader to \cite[\S 8]{Hovey} and to \cite[Introduction]{TesisBarton}).
We note explicitly that these axioms deal only with the underlying $(2,1)$-category structure of $\cc{C}$; that is, they involve only its invertible 2-cells. 

\smallskip

{\em 
The main contributions of this article are a generalization to the context of bicategories of the concepts of lifting property and model category, 
which involve the non-invertible \mbox{2-cells} of the bicategory, 
and a construction of the homotopy bicategory of a model bicategory, that is its localization at the weak equivalences.}

\smallskip

As in dimension 1, a model bicategory consists of a bicategory together with three families of arrows, namely fibrations, cofibrations and weak equivalences, satisfying a set of axioms.  
The axioms we give are a natural generalization to bicategories of those given by Quillen in the sense that they are obtained by requiring the diagrams to commute up to invertible 2-cells, and by considering a 2-dimensional aspect of the lifting properties which relate these families of arrows. In particular, when we consider a category and three families of arrows as a trivial bicategory with the same families of arrows, the two notions coincide: it will be a model bicategory if and only if it is a model category.

Note that in the case of a model category $\cc{C}$, the resulting  homotopy bicategory is a 2-category that has the same arrows than $\cc{C}$, but now it also has 2-cells determined by the usual homotopies, which are not collapsed into equalities \cite{JaG}.
The homotopy 2-category carries in this way a richer structure than the homotopy category, which could be used in particular to compute finer homotopical invariants, and we describe such a potential application in the last paragraph of this introduction. Further applications of model bicategories that we are currently exploring involve studying the higher structure which could be {\em hidden} in the many examples of model categories, such as \cite{Lack2Mod}, \cite{JoyalSS}.

 {
It is {\em tempting} to conjecture that model bicategories can be related to $(\infty,2)$-categories in the sense that model categories are related to $(\infty,1)$-categories, but this is a too vast question that exceeds the scope of our work here, and for which we do not have any concrete proposal at the moment. 
There is however a situation related to our 2-localization $\cc{C} \mr{} \cc{H}o(\cc{C})$ of a model category $\cc{C}$ considered as a discrete model bicategory which would be interesting to investigate in more detail. Let $\cc{C} \mr{} \cc{L}^H\cc{C}$ be the $(\infty,1)$-category modeled by the hammock (or simplicial) localization defined in \cite{DK1}, \cite{DK2}. Since  
$\pi_0 \cc{L}^H\cc{C}$ and  $\pi_0 \cc{H}o(\cc{C})$ are both equivalent to the homotopy category $\nn{H}o(\cc{C})$, there should be a simplicial functor $\cc{L}^H\cc{C} \mr{} \cc{H}o(\cc{C})$ 
\footnote{2-categories can be seen as simplicial categories by identifying their hom categories with their nerves.} 
which would determine a localization of the $(\infty,1)$-category richer than the homotopy category, and which does not collapse homotopies between arrows, as shown in the diagram:
$$\xymatrix
      {
       \cc{C}          \ar[r] \ar[rd] 
     & \cc{L}^H\cc{C}  \ar[d] \ar@/^4ex/[dd]^{\pi_0}
    \\
     & \cc{H}o(\cc{C}) \ar[d]^{\pi_0}
    \\
       {} & \nn{H}o(\cc{C})
      } 
$$ 

We note that a notion of model $(\infty,1)$-category has been defined \cite[\S 1]{MGa}, and that a localization (in the sense of 
$(\infty,1)$-category theory) of such an $(\infty,1)$-category at the weak equivalences, that keeps track of the higher homotopies, has recently been constructed \mbox{\cite[\S 1]{MGb},} but as the author states there these homotopies are no longer given by cylinders and path objects like the 2-cells of $\mathcal{H}{{o}}(\mathcal{C})$ are. It could also be interesting to compare further both constructions.
}

\vspace{1ex}

We describe now precisely how the axioms for a model bicategory that we introduce here differ from the ones considered in \cite{TesisEmi} and \cite{TesisBarton}.
No significant work is required in order to consider these same axioms for a bicategory instead of a 2-category. In particular, regarding the {\em lifting properties} relating the families of arrows, it is natural to consider the following situation (we refer to Section \ref{sec:modelbicat} for details):

Let $\cc{C}$ be a bicategory and $A \mr{i} X$, $Y \mr{p} B$ be two morphisms in $\cc{C}$. They determine the following diagram
$$
\xymatrix@R=2ex
    {
     \cc{C}(X,\,Y)  \ar@/^5ex/[drr]^{i^\ast}
                   \ar[dr]^(.65){h} 
                   \ar@/_4ex/[rddd]^{p_\ast}  \\
   & \nn{P} \ar[dd]_{\pi_2} 
            \ar[r]^(.4){\pi_1}
            \ar@{}[rdd]|{\Downarrow \, \cong}
   & \cc{C}(A,\,Y) \ar[dd]^{p_\ast} 
   \\ \\
   {} 
   & \cc{C}(X,\,B) \ar[r]^{i^\ast} 
   & \cc{C}(A,\,B)
    }
$$
where $\nn{P}$ is the pseudo pullback of categories. 
In \cite{TesisEmi} and \cite{TesisBarton}, the lifting property is taken to mean that $h$ is essentially surjective (see for example \cite[Prop. 7.1.12]{TesisBarton}).

\smallskip

{\em 
The only substantial modification we make to the aforementioned set of axioms, and that deals with non-invertible 2-cells of $\cc{C}$, is that we require the functor $h$ to be also essentially full, see Definition \ref{essfull}.}
 Note that when all the 2-cells of $\cc{C}$ are invertible, if $h$ is essentially surjective then it is automatically essentially full. 

Notably, this single change suffices for our purpose of constructing the {\em homotopy bicategory of $\cc{C}$}, that is its 2-dimensional localization at the weak equivalences.
We also show that this homotopy bicategory is locally small (resp. a 2-category) when $\cc{C}$ is so.

\smallskip

We give now a description of the main results in this paper. 
Given a model bicategory $\C$, denote by $\C_{fc}$ the full sub-bicategory given by the fibrant-cofibrant objects of $\C$. 
We construct a bicategory $\HoC$ whose objects and arrows are those of $\C_{fc}$ and whose 2-cells are the classes of $q$-homotopies (a 2-dimensional version of Quillen's notion of homotopy) up to an equivalence relation. 
We prove that the 2-dimensional localization of $\C$ with respect to the class $\cc{W}$ of weak equivalences can be computed as $\HoC$. 
More specifically, our main theorem (Theorem~\ref{teo:localizationformodel}) asserts that there is a pseudofunctor $\C \mr{q} \HoC$ which sends weak equivalences to equivalences and has the following universal property:  

\begin{equation} \label{eq:uploc}
Hom(\HoC,\cc{D}) \mr{q^*} Hom_{\cc{W},\Theta}(\C,\cc{D})
\end{equation}

\noindent is a biequivalence of bicategories for every bicategory $\cc{D}$; where $Hom_{\cc{W},\Theta}(\C,\cc{D})$ stands for the full sub-bicategory of $Hom(\C,\cc{D})$ given by those pseudofunctors that send weak equivalences into equivalences. 
{We also show that the homotopy bicategory $\HoC$ is locally small (resp. a 2-category) when $\cc{C}$ is so.}

{We define $q$-homotopies as $w$-homotopies which satisfy some extra conditions related to fibrations and cofibrations. 
The notion of $w$-homotopy was introduced in \cite{DDS.loc_via_homot} and is taken with respect  only to the class $\cc{W}$}. {Note that in op. cit. $w$-homotopies are called homotopies, but  here we chose to use the prefix $w$- to explicitly distinguish them from $q$-homotopies.}
Also note that these are \emph{left} homotopies, and that a dual development with \emph{right} homotopies is possible since the class $\cc{W}$ is assumed with no other properties than the self-dual \emph{3 for 2} axiom.
This definition allows to use previous results of \cite{DDS.loc_via_homot}, where for an arbitrary pair $(\cc{A},\cc{W})$ given by a family of arrows of a bicategory we construct a bicategory ${\cc{H}{{o}}(\cc{A},\cc{W})}$ whose objects and arrows are those of $\cc{A}$ and whose 2-cells are constructed with $w$-homotopies. 
In this general case there is no vertical composition of $w$-homotopies, and thus the 2-cells of ${\cc{H}{{o}}(\cc{A},\cc{W})}$ are given by classes of finite sequences of $w$-homotopies by an appropriate equivalence relation. 
Associated with ${\cc{H}{{o}}(\cc{A},\cc{W})}$ there is a {\em canonical} \mbox{2-functor} \mbox{$\cc{A} \mr{i} {\cc{H}{{o}}(\cc{A},\cc{W})}$.}

A natural way in which a proof of Theorem \ref{teo:localizationformodel} can be attempted is to consider Quillen's proof of the localization theorem for model categories in \cite{Quillen}, and to try to generalize it to this context using $q$-homotopies. In doing so, the first obstacle one faces is that there is no canonical way to compose (whisker) 
arbitrary $q$-homotopies with arrows. This is already so for the 1-dimensional case in \cite{Quillen}, as is well known: for arbitrary arrows $X \mrr{f}{g} Y$ in a model category, 
$f \stackrel{\ell}{\sim} g$ doesn't imply $uf \stackrel{\ell}{\sim} ug$ (where $\stackrel{\ell}{\sim}$ is the left homotopy relation). The other composition poses no problem, $fu \stackrel{\ell}{\sim} gu$ is easily seen to hold, as well as $uf \stackrel{r}{\sim} ug$ for a right homotopy  $f \stackrel{r}{\sim} g$. The way in which this obstacle is avoided in \cite{Quillen} is by considering $X$ to be a cofibrant object, and $Y$ fibrant, so that there is a left homotopy between two arrows if and only if there is a right homotopy.  An important feature of (left) $w$-homotopies is that both compositions (whiskerings) have a trivial canonical definition, and so this obstacle disappears.

\smallskip

{\em There is a rich interplay between $w$- and $q$-homotopies, and that was the key we have found to overcoming the obstacle described above {for the $q$-homotopies} and {furthermore} proving Theorem~\ref{teo:localizationformodel}.}
On the one hand, $w$-homotopies allow us to define a homotopy bicategory.
On the other hand, $q$-homotopies satisfy several results analogous to the ones that can be found in \cite{Quillen}, that we establish and use in this article. 

\smallskip

More explicitly, to prove Theorem~\ref{teo:localizationformodel}  we will use the following results:

\smallskip

\noindent {\bf A.} $\HoC$ inherits the bicategory structure of 
$\Wo$ constructed in \cite{DDS.loc_via_homot}, in such a way that \mbox{$\HoC = \Wofc {\,\subset \, \Wo}$}, that is the full sub-bicategory of $\Wo$ given by the fibrant-cofibrant objects. 
We will deduce this from the following three facts:

\smallskip

\noindent A1. $q$-homotopies can be composed vertically: for any pair of composable $q$-homotopies there is a single $q$-homotopy representing the composition.

\noindent A2.  
For any $w$-homotopy between arrows with fibrant codomain there is a fibrant $w$-homotopy in the same class. For any fibrant $w$-homotopy (between arbitrary arrows) there is a fibrant $q$-homotopy in the same class (we say that a homotopy is fibrant if its cylinder is so; this is made precise in Definition \ref{def:whomot}). Dual results hold for right homotopies (when the domain is a cofibrant object).

\noindent A3.  
For any left $q$-homotopy with $X$ a cofibrant object, there exists a right $q$-homotopy in the same class, which can be taken with any chosen path object. Dually, when $Y$ is a fibrant object, any right $q$-homotopy can be replaced by a left one.

\vspace{1ex}

In what follows both left and right homotopies are involved, so for clarity we will indicate with a superscript ``$\ell$" or ``$r$" which are the \mbox{$2$-cells} of the homotopy bicategory being considered.
Working in the dual model bicategory yields a bicategory 
$\cc{H}o(\cc{C}, \cc{W})^r$ where the 2-cells are given by the right homotopies, and which does not coincide with $\cc{H}o(\cc{C}, \cc{W})^\ell$. But by {A2.} and {A3.} we have that, when X is cofibrant and Y is
fibrant, all possible notions of homotopy (right or left, $w$ or
$q$, fibrant or cofibrant) coincide in the following sense: for any two
choices of these concepts, for any homotopy of that kind there is one of
the other kind in the same class. In particular, 
$\cc{H}o_{fc}(\cc{C}, \cc{W})^\ell = \cc{H}o_{fc}(\cc{C}, \cc{W})^r$,
thus $\cc{H}o(\cc{C})$
could be defined using any of these concepts and this would lead to the same homotopy bicategory.

\smallskip

\noindent {\bf B.} There is a fibrant-cofibrant replacement for model bicategories, that is an assignment from $\C$ to $\C$, defined on objects and arrows (but not on 2-cells)
\mbox
{
$X \mrr{f}{g} Y
\leadsto
RQX \mrr{RQf}{RQg} RQY$,
}
 satisfying the expected conditions for a replacement: 
{all the objects $QX$ are cofibrant, all the objects $RQX$ are fibrant-cofibrant, and} 
there are trivial fibrations $QX \mr{\rho_X} X$, trivial cofibrations $QX \xr{\lambda_{QX}} RQX$, and invertible 2-cells

$$
\vcenter{\xymatrix@C=3pc{ RQX \ar@{<-}[r]^{\lambda_{QX}} \ar[d]_{RQf} \ar@{}[dr]|{\Downarrow \lambda_{Qf}} & 
QX \ar@{}[dr]|{\Downarrow \rho_f} \ar[d]_{Qf} \ar[r]^{\rho_X} & X \ar[d]^f \\
RQY \ar@{<-}[r]_{\lambda_{QY}} &  QY \ar[r]_{\rho_Y} & Y }} 
$$

Note that {$Q$ and $R$ are just assignments  {on objects and arrows} and not necessarily pseudofunctors}, because we are not assuming that there is a (pseudo)functorial factorization, just as in Quillen's original axioms in \cite{Quillen}. Nevertheless, we will show that there are unique structural 2-cells (that is, left homotopies) which make
$Q$ into a pseudofunctor \mbox{$\cc{H}o(\cc{C}, \cc{W})^\ell \mr{Q} \cc{H}o(\cc{C}, \cc{W})^\ell$} and, dually, right homotopies which make $R$ into a pseudofunctor $\cc{H}o(\cc{C}, \cc{W})^r \mr{R} \cc{H}o(\cc{C}, \cc{W})^r$ in such a way that $\rho$, $\lambda$ become  pseudonatural transformations $Q \Mr{\rho} id$, $id \Mr{\lambda} R$. We do this by means of a {\em transfer of pseudofunctor structure}, a novel technique introduced in this paper. A natural context in which to state and prove the transfer results is the more general one of lax functors between bicategories, and this is done in Appendix \ref{sec:transfer}. 

\vspace{1ex}

\noindent{\bf C.} 
The switch between left and right homotopies ({A3.} above) allows to compose the replacement pseudofunctors $Q$ and $R$ as follows. 
The results in {A3.} define 
a {\em switch} \mbox{pseudofunctor} 
$\xymatrix{
\cc{H}o^{f}_c(\cc{C},\cc{W})^\ell     
                 \ar[r]^s 
  & \cc{H}o^{c}_c(\cc{C},\cc{W})^r
}$ and its dual 
$\xymatrix{
\cc{H}o^{c}_f(\cc{C},\cc{W})^r     
                 \ar[r]^{s^{op}} 
  & \cc{H}o^{f}_c(\cc{C},\cc{W})^\ell
}$ 
where $\ell$ stands for left, $r$ for right, $f$ for fibrant, and $c$ for cofibrant; 
the superscripts indicate homotopies, and the subscripts objects. 
The pseudofunctors $Q$ and $R$ defined in {\bf B.} can then be restricted to the bicategories shown in the diagram below:
$$
\xymatrix@C=5ex
   {
    \cc{H}o^{f}(\cc{C},\cc{W})^\ell       
                 \ar[r]^Q
  & \cc{H}o^{f}_c(\cc{C},\cc{W})^\ell     
                 \ar[r]^s
                 \ar@{->}@<-4pt>`u[rrr]`[rrr]^{R_\ell}[rrr]
  & \cc{H}o^{c}_c(\cc{C},\cc{W})^r        
                 \ar[r]^R
  & \cc{H}o^{c}_{fc}(\cc{C},\cc{W})^r     
                 \ar[r]^{s^{op}} 
  & \cc{H}o^{f}_{fc}(\cc{C},\cc{W})^\ell  
   }
$$
We define a fibrant-cofibrant replacement as this composition. Note that its domain and its codomain are bicategories of left homotopies.

\smallskip

{\em Although ultimately for the fibrant-cofibrant replacement both left and right homotopies are necessary, the latter become implicit in our construction. 
This allows for a concise, clear proof of the localization theorem, where many of the technical difficulties
of working with bicategories are avoided.}

\noindent {\bf D.}  
The $2$-functor {$\C \mr{i} \Wo$} factors through 
$\cc{H}o^f(\cc{C}, \cc{W})$\footnote{{Since $RQ$ involves only left homotopies, we drop the superscript $\ell$}.}, and we define 
\mbox{$q = RQ \,i: \C \mr{i} \cc{H}o^f(\cc{C}, \cc{W}) 
\mr{RQ} \HoC = \cc{H}o_{fc}(\cc{C}, \cc{W}) \subset 
\cc{H}o(\cc{C}, \cc{W})$.} Its universal property is deduced formally from the properties that $RQ$ and $i$ have separately.

\vspace{2ex}

Despite the independent interest that the results in this article may have, our motivation to develop a theory of model bicategories comes from potential applications in the homotopy theory of topoi. Given a site $\cc{C}$, the category of coverings with refinements as arrows fails to be filtered, but it underlies a 2-category which is 2-filtered in the sense of \cite{DS}. The \v{C}ech nerve $\ff{COV}(\cc{C}) \mr{\check{C}} \ff{SS}$ into the category of simplicial sets followed by the functor $\ff{SS} \mr{} Ho(\ff{SS})$ into the homotopy category factors through the poset $cov(\cc{C})$ of coverings under refinements, which is filtered, and so it determines a pro-object \mbox{$cov(\cc{C}) \mr{} \ff{SS}$,} that is, it determines an object of the category $Pro(Ho(\ff{SS}))$, where the information coded into the explicit homotopies is lost. In \cite{DD}, the first two authors developed a 2-dimensional theory of pro-objects which generalizes Grothendieck's pro-objects theory, and in \cite{TesisEmi} it is proven that a 2-model structure in a 2-category $\cc{C}$ can be lifted to the 2-category $2$-$Pro(\cc{C})$, a 2-dimensional generalization of the construction in \cite{EH}. This paper, in particular, complements \cite{DD} and \cite{TesisEmi}. The \v{C}ech nerve can be seen as a simplicial 2-pro-object, and thus it determines an object of the 2-category $Ho(2$-$Pro(\ff{SS}))$. In particular, shape theory of topological spaces discards the explicit homotopies and works with the \v{C}ech nerve in the category $Pro(Ho(\ff{SS}))$. Strong shape theory works in the category $Ho(Pro(\ff{SS}))$, that keeps the information coded in the explicit homotopies, but has the difficulty that the \v{C}ech nerve is not an object of $Pro(Ho(\ff{SS}))$.  Our results provide a conceptual framework to use the \v{C}ech nerve in strong shape theory as an object in \mbox{$\cc{H}o(2$-$Pro(\ff{SS}))$,} just as it is used in shape theory. {Note that the results in \cite{TesisEmi} are applied to lift the model structure in the category $\ff{SS}$ into a $2$-model structure in the 2-category $2$-$Pro(\ff{SS})$.}

\vspace{1ex}

{\bf About functorial factorizations.} We note that in {several} modern approaches to model categories functorial factorizations are considered as part of the model category axioms
{, but we have two reasons not to assume this stronger condition here.} 
The existence of such factorizations, under some extra assumptions on a model category (that permit Quillen's \emph{small object argument} \cite{Quillen}) has been the subject of several articles, for example \cite{Garner1}, \cite{Garner2}, \cite{Barthel-Riehl}. However for the construction of the model structure in $Pro(\cc{C})$ in \cite{EH}, even if the factorization in $\cc{C}$ is functorial, this does not follow for $Pro(\cc{C})$  because the extra assumptions on $\cc{C}$ are not inherited by $Pro(\cc{C})$ (see also \cite{Isaksen}, \cite{Barnea}).
This is also the case in the 2-dimensional version in \cite{DD} and \cite{TesisEmi}, which was our original motivation for introducing model bicategories and its basic theory to be applied in $2$-$Pro(\ff{SS})$. 

Independently of the example above, we think that to use a functorial factorization in the basic elementary treatment of the theory, especially if it is considered as part of the structure, is actually a step backwards from Quillen's revolutionary insight that to define homotopies any suitable defined \emph{cylinder object} can be used in place of the classical cylinder $X \times I$ ($I$ the topological or simplicial unit interval). 
He shows and stresses in \cite{Quillen} that functoriality of cylinders is not necessary and can be dropped. 
Finally, functorial factorizations yield functorial fibrant-cofibrant replacements which allow to consider only left (or only right) homotopies for proving that the homotopy category is the localization (see for example \cite[Thm. 1.2.10]{Hovey} or 
\mbox{\cite[Section 4]{JaG}),} unlike in the original proof of \cite[I, \S 1, Theorem 1]{Quillen} where both homotopies play an essential role.  
This breaks the beautiful interplay between both sides of the duality that is inherent to a model category.  

\subsection*{Organization}

The paper is structured as follows.
Section~\ref{sec:prelim} contains some preliminaries which are convenient to have explicitly at hand. In Section~\ref{sec:modelbicat} we give the basic definitions of the theory of model bicategories. 
In Section~\ref{sub:descripcionHoC} we 
define the homotopy bicategory $\HoC$ and prove the results explained in {\bf A} above, as well as other relevant results for $w$- and $q$-homotopies {such as the relation between right and left homotopies mentioned in {\bf E}}.
{In Section~\ref{sec:replacement-model} we construct the fibrant-cofibrant replacement, that is the pseudofunctors $Q$ and $RQ$, and the pseudonatural transformations $\rho$, $\lambda$ as explained in {\bf B} {and {\bf E}}}. Section \ref{sec:locthm} consists of Theorem~\ref{teo:localizationformodel} and its proof {as outlined in {\bf C}}. Appendix \ref{sec:transfer} establishes the transfer of structure for bicategories and pseudofunctors {mentioned in {\bf D}} that we apply in the paper.

\section{Preliminaries} \label{sec:prelim}

\smallskip

The concepts of bicategory, pseudofunctor, pseudonatural transformation and modification are ubiquitous in the literature, so we have decided to omit their definitions here. {The reader can check Appendix \ref{sec:transfer} where we found it necessary to recall these definitions because the axioms are explicitly used, contrary to the case of the body of the paper.}

\vspace{1ex} 

\begin{sinnadastandard} \label{basic notation} 
{\bf Notation.} 
For a bicategory $\cc{C}$ and objects $X,\, Y \in \cc{C}$, 
$\cc{C}(X,\,Y)$ denotes the hom-category of arrows and $2$-cells. For two bicategories $\cc{C},\, \cc{D}$, $Hom(\cc{C},\,\cc{D})$ denotes the bicategory of pseudofunctors, pseudonatural transformations and modifications. Vertical composition is denoted by ``$\circ$", and horizontal
composition by ``$\ast\,$". We consider ``$\,\ast\,$" more binding than
``$\,\circ\,$" .
\end{sinnadastandard} 

\noindent 
{\bf Coherence.} 
There is a well-known coherence theorem (see for example \cite{basicbicat}) which generalizes the coherence theorem for tensor categories. Given any sequence of composable arrows in a bicategory, the parentheses determine the order in which the compositions are performed. The coherence theorem states that the arrows resulting of any choice of parentheses (and adding or subtracting identities) are canonically isomorphic by an unique 2-cell built with the associators and the unitors. This justifies the following abuse of notation which greatly simplifies the computations: 

\begin{sinnadastandard} \label{sin:abuse} 
\emph{We write any horizontal composition of arrows omitting the parentheses and the identities. In this way, the associator and the unitors disappear in the diagrams of 2-cells.}
\end{sinnadastandard}

{\begin{sinnadastandard} \label{dualbicategory}
{\bf Dual bicategory.}
Given a bicategory $\cc{C}$, the \emph{dual} bicategory, denoted $\cc{C}^{op}$, consists of the bicategory with the same objects, arrows and $2$-cells, but formally reversing the direction of the arrows (while retaining the direction of the $2$-cells).
\end{sinnadastandard}}

\begin{sinnadastandard} \label{equivalencias}
{{\bf Equivalences}}.
 
1. An arrow $X \mr{f} Y$ of a bicategory is an {\em equivalence} if there exists an arrow $Y \mr{g} X$ (which we call a {\em quasi-inverse} of $f$) and isomorphisms $g * f \cong id_X$, $f * g \cong id_Y$. It is well-known that these isomorphisms can be taken satisfying the usual triangular identities, and we will assume that this is the case when needed.

{2. It is well known that $X \mr{f} Y$ is an equivalence if and only if for every object $Z$ the functor $\C(Z,X) \mr{f_*} \C(Z,Y)$ is an equivalence of categories, and if and only if the functor \mbox{$\C(Y,Z) \mr{f^*} \C(X,Z)$} is so.}

3. {Recall that} a pseudonatural transformation $\theta: F \Mr{} G: \C \mr{} \cc{D}$ 
is an equivalence in the {2-category} $Hom(\C,\cc{D})$, i.e. there exists $G \Mr{\mu} F$ and invertible modifications \mbox{$\theta \circ \mu \cong id_{G}$, $\mu \circ \theta\cong id_{F}$,} if and only if each $\theta_X$ is an equivalence in $\cc{D}$. 
{For a proof, see for example \cite[1.10]{PronkWarren}, or, for a detailed proof \cite[1.17]{JaG}.} {It can be checked that this same proof also holds for the 2-categories of lax functors.}

4. A pseudofunctor $\C \mr{F} \cc{D}$ is a {\em biequivalence of bicategories}
if there exist a pseudofunctor $\cc{D} \mr{G} \C$ (which we call {a {\em bi-inverse} or} a {\em pseudoinverse} of $F$) and pseudonatural transformations \mbox{$GF \Mr{\alpha} id_{\C}$,} $FG \Mr{\beta} id_{\cc{D}}$ which are equivalences.
\end{sinnadastandard}

\begin{sinnadastandard} \label{wfull}
{\bf On a weak notion of full functor.}
{For a category $\nn{X}$, we denote by $\nn{X}^\nn{2}$ the category that has the arrows of $\nn{X}$ as objects and the commutative squares as arrows.} 
Note that a functor between categories $\nn{X} \mr{F} \nn{A}$ is full if and only if the induced functor 
{$\nn{X}^\nn{2} \mr{F^2} (\nn{Im_F})^\nn{2} \subset \nn{A}^\nn{2}$} is surjective on objects, where $\nn{Im_F}$ denotes the full image {of $F$}.

\begin{definition} \label{essfull}
We say that a functor between categories  $\nn{X} \mr{F} \nn{A}$ is {\em essentially full} if and only if the induced functor 
{$\nn{X}^\nn{2} \mr{F^2} (\nn{Im_F})^\nn{2} \subset \nn{A}^\nn{2}$} is essentially surjective. 

That is, given any $FX \mr{g}  FY$ there exists $X',\, Y'$, 
$X' \mr{f} Y'$, and 
isomorphisms $FX' \mr{a} FX$, $FY' \mr{b} FY$ such that 
$g \circ a = b \circ Ff$.
\end{definition}

\begin{remark}
A notion of essentially full functor between categories which is similar to the one in the present article was considered in \cite[p.469]{Rosicky}. 
We recall that there is also a notion of {\em essentially full pseudofunctor} $F$ between bicategories: it means that the induced functors between the Hom-categories are essentially surjective. This means that $F$ is surjective on 1-cells, up to invertible 2-cells, that is, given any 
$FX \mr{g}  FY$ there exists  $X \mr{f} Y$, and an invertible  
$2$-cell $Ff \Mr{\alpha} g$. 
We warn that for a functor it is not equivalent to be  essentially full as in Definition \ref{essfull} or as a pseudofunctor between the discrete bicategories.
\end{remark}

The following can be easily checked:

\begin{remark} \label{MM1}
A functor between categories  $\nn{X} \mr{F} \nn{A}$ is essentially surjective and essentially full if and only if $\nn{X}^\nn{2} \mr{F^2} \nn{A}^\nn{2}$ is essentially surjective, that is, for every $A$, $B$, and $A \mr{g} B$ in $\nn{A}$, there exist $X$, $Y$, isomorphisms $FX \mr{a} A$, $FY \mr{b} B$, and $X \mr{f} Y$ in 
$\nn{X}$ such that $g \circ a = b \circ Ff$.
\end{remark}
 \end{sinnadastandard}
 
\begin{sinnadastandard} \label{sin:limits}
We will consider \emph{biLimits} in an arbitrary bicategory $\cc{C}$, whose universal property is established by postulating an equivalence between the appropriate hom-categories (as in for example \cite[\S 6]{Kellyelem}). 
By the term \emph{psLimit} (``ps" should be read as ``pseudo"), we require the equivalence to be an isomorphism. The use of an initial capital letter was adopted following \cite{Cole}. 
We will consider in $\cc{C}$ the concepts of initial object, product, pullback, comma-object, tensor and their dual versions terminal object, coproduct, pushout, cocomma-object, cotensor. 
We specify a case which differs from the terminology in \cite{Kellyelem}. Given a diagram $B \mr{f} D \ml{g} C$, by its \emph{biPullback} we refer to the {bi}-universal diagram of the form
$$
\xymatrix{ P \ar@{}[rd]|{\cong \, \Downarrow \; \lambda} \ar[d]_{\pi_2} \ar[r]^{\pi_1} & C \ar[d]^{g} \\ B \ar[r]_f & D}
$$

We also refer to $\pi_1$  as the \emph{biPullback of} $f$ along $g$ (resp. $\pi_2$ as the \emph{biPullback of} $g$ along $f$). When $\lambda$ is not required to be invertible, we refer to this biLimit as a \emph{lax-biPullback} (which according to the terminology in \cite[(4.2)]{Kellyelem}, would be called \emph{bi-iso-comma object} and \emph{bi-comma object} respectively).

We denote the bicoProduct of $X$ and $Y$ by 
$X \amalg Y$ and its inclusions by $i_0,i_1$. Given arrows 
$X \mr{f} Z$, $Y \mr{g} Z$, we denote the induced arrow (note that this is an abuse of notation since the arrow is not unique) by 
$\binom{f}{g}:X \amalg Y \mr{} Z$. We leave unnamed the invertible 2-cells $f \cong \binom{f}{g} \ast i_0$, 
$g \cong \binom{f}{g} \ast i_1$. Reciprocally, given an arrow 
$X \amalg Y \mr{} Z$ we denote it by $\binom{f}{g}$ to indicate that we define $f = \binom{f}{g} \ast i_0$, 
$g =\binom{f}{g} \ast i_1$.
Note that if we have in addition $Z \mr{h} W$, we have 
$h * \binom{f}{g} = \binom{h * f}{h * g}$.  
Given 2-cells $X \cellrdEcorta{f}{\alpha}{f'} Z$, 
$Y \cellrdEcorta{g}{\beta}{g'} Z$, we denote the induced 2-cell by $\binom{\alpha}{\beta}: \binom{f}{g} \Mr{} \binom{f'}{g'}$. 
When $Y=X$ we denote the codiagonal $\binom{id_X}{id_X}$ by 
$\nabla_X$. Similarly we denote the biProduct of $X$ and $Y$ by
$X \times Y$, and given arrows $Z \mr{f} X$, $Z \mr{g} Y$, we denote the induced arrow by 
$(f,\,g):Z \mr{} X \times Y $. The diagonal is denoted {$\Delta_X$.}
\end{sinnadastandard}

\section{Model bicategories} \label{sec:modelbicat}

In this section we will give the definition of a model bicategory. We consider Quillen's original axioms as were introduced in \cite[\S 1]{Quillen}. Since in the known examples Quillen's axioms of \emph{closed} model category hold, it is usual nowadays to consider these stronger axioms as the definition of model category. Especially since this notion (that of a closed model category) admits a neat and compact presentation in terms of \emph{weak factorization systems}.
{In this paper, however, 
in an attempt to be closer to the original construction of the homotopy category in \cite[\S 1]{Quillen},
we won't assume these stronger axioms.}
We think it is still convenient (and important) to consider Quillen's original axioms as a guide for the model bicategory axioms.

We now give the notion of lifting property for a pair of arrows in a bicategory. Note that a {stronger} notion in which the lifting is required to be {\em universal} is considered {for \emph{factorization systems} (not weak)} in 2-categories in \cite[1.3,1.4]{Vitale}. {This notion would correspond to asking the functor $h$ in the diagram below to be an equivalence of categories, instead of just essentially surjective and essentially full as in our definition.}

Let $\cc{C}$ be a bicategory and $A \mr{i} X$, $Y \mr{p} B$ be two morphisms in $\cc{C}$, they determine the following diagram (note that the associator defines a natural transformation 
\mbox{$p_\ast \, i^\ast \Mr{\theta} i^\ast \, p_\ast$).}
$$
\xymatrix@R=2ex
    {
     \cc{C}(X,\,Y)  \ar@/^5ex/[drr]^{i^\ast}
                   \ar[dr]^(.65){h} 
                   \ar@/_4ex/[rddd]^{p_\ast}  \\
   & \nn{P} \ar[dd]_{\pi_2} 
            \ar[r]^(.4){\pi_1}
            \ar@{}[rdd]|{\Downarrow \, \cong}
   & \cc{C}(A,\,Y) \ar[dd]^{p_\ast} 
   \\ \\
   {} 
   & \cc{C}(X,\,B) \ar[r]^{i^\ast} 
   & \cc{C}(A,\,B)
    }
$$
where $\nn{P}$ is the biPullback of categories (in fact a psPullback),  
$\pi_1 \, h = i^\ast$,  $\pi_2 \, h = \, p_\ast$.

\begin{definition}\label{def:lifting}
We say that a pair $(i,p)$ as above has the \emph{lifting property} if the functor $h = (i^\ast,\, p_\ast)$ in the diagram above is essentially surjective and essentially full\footnote{{In a preliminary version of this article a stronger axiom was required for the lifting property which implied the uniqueness of the filler. We are grateful to Valery Isaev for pointing this out to us.}}.
\end{definition}

{Note} that an object of $\nn{P}$ is a triplet 
 $(a, \,\gamma, \, b)$ as in the left below, and an arrow between two such objects is a pair of $2$-cells 
$(a, \, \gamma,\, b) 
\xymatrix@C=5ex{{} \ar[r]^{(\alpha,\,\beta)} & {}}
(a', \, \gamma',\, b')
$ 
as in the middle diagram, satisfying the equation on the right:
\begin{equation} \label{P_arrows}
\vcenter{\xymatrix@R=7ex@C=10ex
   {
    A \ar@{}[dr] | {\cong \, \Downarrow \;\; \gamma}
      \ar[r]^a \ar[d]^i & Y \ar[d]^p
    \\
    X \ar[r]^b & B
   }}
\vcenter{\xymatrix{,}}
\hspace{5ex}
\vcenter{\xymatrix@R=7ex@C=10ex 
    {
     A \ar[d]_{i} 
       \ar@<1.2ex>[r]^(.4){a}
       \ar@{}[r] | (0.6){\alpha \, \Downarrow}
       \ar@<-1.2ex>[r]^(.4){a'}
       \ar@{}[dr] | {\cong \, \Downarrow \;\; \gamma \; \gamma'}
   & Y \ar[d]^{p} 
   \\
     X \ar@<1.2ex>[r]^(.4){b}
       \ar@{}[r] | (0.6){\beta \, \Downarrow}
       \ar@<-1.2ex>[r]^(.4){b'} 
   & B
    }}
\vcenter{\xymatrix{,}} 
\hspace{5ex}
{ \beta \ast i \circ \gamma = 
\gamma' \circ p \ast \alpha }
\end{equation}
{
We now unfold Definition \ref{def:lifting} so as to have at hand the statements to be used in practice. We found it convenient to set aside what it means for $h$ to be essentially surjective, because it has independent interest and it is sufficient in many applications of the lifting property.
The proof of the following Lemma is immediate by the description of the category $\nn{P}$ above, and by Remark \ref{MM1}.

\begin{lemma}\label{lem:lifting_prop} 
    With the notation as in Definition \ref{def:lifting},
\begin{enumerate}
    \item $h$ is essentially surjective if and only if the following condition holds:

\smallskip
\noindent {\bf {Ls}.} For each triplet 
$(a, \,\gamma, \, b)$ as in the left diagram below, there exist a morphism $X \mr{f} Y$ and invertible 2-cells $\lambda$, 
$\rho$ as in the middle diagram, satisfying the equation on the right:

\vspace{-2ex}

\begin{equation}  \label{L1}
\vcenter{\xymatrix@R=7ex@C=10ex
   {
    A \ar@{}[dr] | {\cong \, \Downarrow \;\; \gamma}
      \ar[r]^a \ar[d]^i & Y \ar[d]^p
    \\
    X \ar[r]^b & B
   }}
\vcenter{\xymatrix{,}}
\hspace{5ex}
\vcenter{\xymatrix@R=7ex@C=9ex
      {
          A  \ar[r]^{a} \ar[d]^{i} 
             \ar@{}[rd]|(.22){\; \cong \, \Uparrow \, \lambda}
             \ar@{}[rd]|(.7){\cong \, \Downarrow \, \rho}  
        & Y  \ar[d]^{p}                 
        \\
          X  \ar[r]^{b} \ar[ru]^{f}
        & B
       }}
\hspace{1ex}
,
\hspace{5ex}
{\rho \ast i =  \gamma \circ p \ast \lambda}
\end{equation}
We say that $(f,\,\lambda,\, \rho)$ is a \emph{filler} for the square $(a, \,\gamma, \, b)$.

\item $h$ is essentially surjective and essentially full (that is, $(i,p)$ has the lifting property) if and only if the following condition holds:

\noindent {\bf {L}.}
For each $(a, \,\gamma, \, b)$, $(a', \,\gamma', \, b')$  
and 
$(a, \, \gamma,\, b) 
\xymatrix@C=5ex{{} \ar[r]^{(\alpha,\,\beta)} & {}} 
(a', \, \gamma',\, b')$ 
as in (\ref{P_arrows}), there exist fillers 
$(f,\,\lambda,\,\rho)$,  $(f',\,\lambda',\,\rho')$ as in (\ref{L1}), and 
$f \Mr{\delta} f'$ such that 
\begin{equation}
{ \lambda' \circ \delta \ast i
=
\alpha \circ \lambda
\hspace{1ex} , \hspace{5ex}
\rho' \circ p \ast \delta
=
\beta \circ \rho}
\end{equation}

\end{enumerate}
\end{lemma}
}

\begin{remark}
For $\alpha$ and $\beta$ invertible, it is easy to check that the statement {\bf L} above follows from {\bf Ls}. Indeed, taking a filler $(f,\lambda,\rho)$ for the square $(a, \,\gamma, \, b)$ and constructing the filler $(f,\alpha^{-1} \circ \lambda,\beta \circ \rho)$ for the square $(a', \,\gamma', \, b')$ does the trick.
\end{remark}

\vspace{1ex}

\noindent \dbend \hspace*{2ex}
Note that the lifting property is self-dual, equivalent to but different from its dual. 
 \hspace*{5.5ex} The equivalence holds because the $2$-cell 
 $\gamma$ is invertible. In the dual formulation \hspace*{6ex} the biPullback of categories is replaced by the op-biPullback.

\vspace{1ex}

It is convenient to spell out the statement dual to {\bf Ls} above:

\vspace{1ex} 

\noindent {\bf Ls$^{op}$.}
For each invertible $2$-cell 
$(a, \,\gamma, \, b)$ as in the left diagram below, there exist a morphism $f$ and invertible 2-cells $\lambda$, 
$\rho$ as in the middle diagram, satisfying the equation on the right:

\vspace{-2ex}

\begin{equation}  \label{L1op}
\vcenter{\xymatrix@R=7ex@C=10ex
   {
    A \ar@{}[dr] | {\cong \, \Uparrow \;\; \gamma}
      \ar[r]^a \ar[d]^i & Y \ar[d]^p
    \\
    X \ar[r]^b & B
   }}
\vcenter{\xymatrix{,}}
\hspace{5ex}
\vcenter{\xymatrix@R=7ex@C=9ex
      {
          A  \ar[r]^{a} \ar[d]^{i} 
             \ar@{}[rd]|(.22){\; \cong \, \Uparrow \, \lambda}
             \ar@{}[rd]|(.7){\cong \, \Downarrow \, \rho}  
        & Y  \ar[d]^{p}                 
        \\
          X  \ar[r]^{b} \ar[ru]^{f}
        & B
       }}
\hspace{1ex}
,
\hspace{5ex}
{ \gamma \circ \rho \ast i = p \ast \lambda}
\end{equation}

\begin{comment}
$$
\vcenter{\xymatrix@C=-0.4ex
   {
   p\,f \dcell{\rho} & {} & i \deq 
  \\
    b \dl & {} \dcla{\gamma} & i \dr
  \\
    p & {} & a 
   }}
\hspace{1ex} = \hspace{1ex}
\vcenter{\xymatrix@C=0.2ex
   {
    p \deq & f\,i \dcell{\lambda} 
  \\
    p & a 
   }}
$$
\end{comment}

\begin{definition} \label{def:modelbicat}
 We say that a bicategory $\cc{C}$ is a \emph{model bicategory} if it is equipped with three classes of morphisms
$\cc{F}, co\cc{F}$ and $\cc{W}$ 
  called respectively fibrations, cofibrations and weak equivalences satisfying the following set of axioms:

\smallskip

\noindent {\bf M0.} $\cc{C}$ has bi{terminal} objects, biPullbacks, and their dual coLimit versions.

\smallskip

\noindent {\bf M1.} Given a cofibration $i$ and a fibration $p$, if one of them is a weak equivalence, then $(i,p)$ has the  lifting property {\bf L}. 

\smallskip

\noindent {\bf M2.} Every morphism $f$ can be factored up to an invertible 2-cell as $f \cong p \ast i$ with $i$ a cofibration and $p$ a fibration. 
In addition, we may require either $i$ or $p$ to be a weak equivalence.

\smallskip

\noindent {\bf M3.} Fibrations (respectively cofibrations) are closed under composition and biPullbacks (respectively biPushouts). Every equivalence is a fibration and a cofibration. If there is an invertible 2-cell $f \cong g$ and $f$ is a fibration (resp. a cofibration), then so is $g$.

\smallskip

\noindent {\bf M4.} If a morphism $f$ is the biPullback (resp. biPushout) of a fibration (resp. cofibration) which is also a weak equivalence, then $f$ is a weak equivalence.

\smallskip

\noindent {\bf M5.} The class of weak equivalences satisfies the ``3 for 2" axiom:
for every three arrows $f,g,h$ such that there is an invertible 2-cell $gf \cong h$, whenever two of the three arrows are weak equivalences, so is the third one. Also, every equivalence is a weak equivalence.
\end{definition}

Note that by interchanging fibrations with cofibrations and keeping the same weak equivalences it is clear that the bicategory dual to a model bicategory is also a model bicategory, so that each valid statement has a valid dual statement. 

\begin{remark}   \label{rem:axiomM1s}
{\bf M1s axiom.}
A notion of 2-model 2-category was introduced in \cite{TesisEmi} where the lifting axiom {\bf M1} was considered with the essentially surjective item {\bf Ls} in Lemma \ref{lem:lifting_prop}. We denote by {\bf M1s} this weaker axiom, which as stated before is sufficient in many applications of axiom {\bf M1}. The full strength of axiom {\bf M1} is nevertheless necessary to deal with arbitrary, non-invertible 2-cells in the fibrant-cofibrant replacement.
\end{remark}
\begin{remark}   \label{rem:axiomMM}
\noindent {\bf MM axioms.} Though the axioms in Definition \ref{def:modelbicat} suffice to construct the homotopy bicategory, in order to vertically compose the homotopies as in \cite{Quillen} further axioms are required, obtained by considering lax biPullbacks (and lax biPushouts) in axioms {\bf M0}, {\bf M3}, and {\bf M4}. Since in this paper these axioms are only needed for that construction, that we do in Section \ref{sub:compvert}, we chose to make them explicit there.
\end{remark}

\begin{remark}
{
Model categories are usually required to be complete and cocomplete, so axiom {\bf M0} above may seem to be too weak a condition regarding limit existence. 
One could also consider stronger completeness axioms depending on the context:

\smallskip

\noindent {\bf MC0.} $\cc{C}$ has all conical finite biLimits and biColimits

\noindent {\bf MW0.} $\cc{C}$ has all weighted finite biLimits and biColimits.}

\smallskip

\vspace{1ex}

{In \cite{TesisEmi} axiom {\bf MW0} was assumed but it is never used in this paper, and we chose to omit it. Related to this, 
we mention that the particular case of finite tensors and cotensors as well as a stronger lifting property (Quillen's axiom SM7) are assumed in Quillen's simplicial model categories, where 
$X \otimes \nn{I}$ and $\{\nn{I},X\}$ furnish functorial cylinders and path objects respectively (for $\nn{I}$ the unit interval). This is generalized to enriched model categories in \cite{GM}. In this paper we do not consider the $\bf Cat$-enriched version corresponding to simplicial model categories, and we have not investigated this line of research. As usual for \mbox{2-dimensional} category theory, the axioms coming from enrichment in $\bf Cat$ are too strict when compared to the axioms introduced here and in \cite{TesisEmi}, \cite{TesisBarton}.
}  
\end{remark}

\begin{remark}
A model category $\nn{C}$ (\cite{Quillen}) can be regarded as a model bicategory \mbox{(2-category)} in which every 2-cell is the identity.
\end{remark}

{Taking the set of connected components in the hom-categories  of the homotopy bicategories constructed in this paper yields the constructions and results of \cite{Quillen} for $\nn{C}$, see \cite{JaG} for a full account of this.}

\begin{remark}
{It can be checked that} any bicategory satisfying {\bf M0} is a model bicategory with $\cc{W}$ the equivalences, and every arrow a fibration and a cofibration. 
\end{remark}

For the rest of the article, 
except when we consider the more general case 
{$(\cc{C},\,\cc{W})$ of a category and a single class 
$\cc{W}$, $\C$ will denote an arbitrary model bicategory 
$(\cc{C},\, \cc{F},\, co\cc{F},\, \cc{W})$.}

\begin{definition}
We say that an arrow in $\C$ is a \emph{trivial (co)fibration} if it is simultaneously a (co)fibration and a weak equivalence. 
We will use the following notation: 

\begin{enumerate}
\item {$\xymatrix{\cdot \ar[r]^{\simeq} & \cdot }$ is an equivalence.}
\item $\xymatrix{\cdot \ar[r]^{\sim} & \cdot }$ is a weak equivalence.
\item $\xymatrix{\cdot \ar@{->>}[r] & \cdot }$ is a fibration,
$\xymatrix{\cdot \ar@{->>}[r]^{\sim} & \cdot }$ is a trivial fibration,
\item $\xymatrix{\cdot \ar@{^{(}->}[r] & \cdot }$ is a cofibration,
$\xymatrix{\cdot \ar@{^{(}->}[r]^{\sim} & \cdot }$ is a trivial cofibration
\end{enumerate} 
\end{definition}
\begin{definition} Let $X$ be an object of $\cc{C}$.
 \begin{enumerate}
  
  \item We say that $X$ is \emph{fibrant} if the morphism $X \mr{}*$ is a fibration.
  
  \item We say that $X$ is \emph{cofibrant} if the morphism $0\mr{}X$ is a cofibration.
                   
 \end{enumerate}

We denote by $\cc{C}_f$, $\cc{C}_c$, $\cc{C}_{fc}$ the full sub-bicategories of fibrant, cofibrant and fibrant-cofibrant objects (i.e. objects that are both fibrant and cofibrant) respectively. We denote with the same letters $\cc{F}$, $co\cc{F}$, $\cc{W}$ the restrictions of these families of arrows to the three bicategories.
\end{definition}

\begin{remark}
Note that $0$ and $*$ denote the Initial and the Terminal object respectively given by axiom {\bf M0}. More explicitly, for each $X\in \cc{C}$, there exists a morphism $0\mr{}X\in \cc{C}$ up to unique invertible 2-cell, and dually for $*$. 
In the previous definition the abuse of saying ``the morphism'' is justified by axiom {\bf M3}. \qed
\end{remark}

\begin{definition} \label{def:retrsect}
	Let $X \mr{s} Y$, $Y \mr{r} X$ be arrows of a bicategory. If there is an invertible 2-cell $rs \cong id_X$, $s$ is called a \emph{section for} $r$, and $r$ is called a \emph{retraction for} $s$.
	An arrow $X \mr{s} Y$ is called a \emph{section} if there exists $r$ such that $s$ is a section for $r$ and dually an arrow is called a \emph{retraction} if it admits a section. An arrow that is either a section or a retraction is called a {\em split} arrow.
\end{definition}

{A key fact in the theory of model categories} is that any weak equivalence between fibrant-cofibrant objects can be factored as a section followed by a retraction, both of them weak equivalences, and that this fact can be used to prove Whitehead's theorem (see \cite[Th. 1.10]{GJ}, and also \cite[Prop. 3.1.21]{S.paper11}). We now show the bicategorical equivalent of this statement, {whose short proof is just the bicategorical version of the standard model-category argument, and is included for the sake of completeness.}

\begin{proposition} \label{prop:compderetractos}
Let $X \mr{f} Y\in\C$ be a weak equivalence, with $X$ fibrant and $Y$ cofibrant. Then axiom {\bf M2} yields a factorization of $f$ as a composition
$X \mr{i} Z \mr{p} Y$ of a section and a retraction, both of them weak equivalences. If furthermore both $X$ and $Y$ are fibrant-cofibrant, then so is $Z$.
\end{proposition}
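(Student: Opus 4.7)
The plan is to apply the factorization axiom \textbf{M2} to $f$ and then produce the splittings via the lifting axiom \textbf{M1}, using the fibrancy of $X$ and the cofibrancy of $Y$.

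First, factor $f$ using \textbf{M2}: there exist $X \xrightarrow{i} Z \xrightarrow{p} Y$ with $i$ a trivial cofibration, $p$ a fibration, and an invertible 2-cell $f \cong p \ast i$. Since $f$ and $i$ are weak equivalences, axiom \textbf{M5} (3-for-2) gives that $p$ is a weak equivalence as well, hence a trivial fibration. So both $i$ and $p$ are weak equivalences, as required.

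To exhibit $i$ as a w-section, I would consider the square
$$\xymatrix@C=3pc@R=2pc{X \ar[r]^{id_X} \ar@{^{(}->}[d]_{i}^{\sim} & X \ar@{->>}[d] \\ Z \ar[r] & *}$$
whose right vertical arrow is a fibration because $X$ is fibrant. This square commutes up to the unique invertible 2-cell into the Terminal object, so axiom \textbf{M1} applied to the trivial cofibration $i$ and the fibration $X \to *$ provides a filler $r \colon Z \to X$ together with an invertible 2-cell $r \ast i \cong id_X$. Thus $i$ is a w-section (of $r$). Dually, to exhibit $p$ as a w-retraction, I would use cofibrancy of $Y$ together with the square
$$\xymatrix@C=3pc@R=2pc{0 \ar[r] \ar@{^{(}->}[d] & Z \ar@{->>}[d]_{\sim}^{p} \\ Y \ar[r]_{id_Y} & Y}$$
The square commutes up to the unique invertible 2-cell out of the Initial object, so applying \textbf{M1} to the cofibration $0 \hookrightarrow Y$ and the trivial fibration $p$ gives a filler $s \colon Y \to Z$ with $p \ast s \cong id_Y$, so $p$ is a w-retraction (of $s$).

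For the final assertion, assume in addition $X$ and $Y$ are both fibrant-cofibrant. For fibrancy of $Z$, note that the unique arrow $Z \to *$ (up to invertible 2-cell) is isomorphic to the composite $Z \xrightarrow{p} Y \to *$, which is a fibration by \textbf{M3} (fibrations are closed under composition, and $Y$ is fibrant). Since fibrations are stable under invertible 2-cells (\textbf{M3}), $Z$ is fibrant. Dually, $0 \to Z$ is isomorphic to $0 \to X \xrightarrow{i} Z$, a composite of cofibrations, hence a cofibration by \textbf{M3}, so $Z$ is cofibrant.

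The argument is essentially a direct translation of the classical 1-categorical proof; the only point requiring attention is that every commutativity we invoke is now a commutativity up to invertible 2-cell, so the lifting data must be produced via \textbf{LP1} rather than a strict diagonal. I do not anticipate a serious obstacle here, since the relevant squares land in the Initial or Terminal object, for which the required 2-cells are uniquely determined.
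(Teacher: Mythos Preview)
Your proof is correct and follows essentially the same approach as the paper: factor $f$ via \textbf{M2} into a trivial cofibration followed by a trivial fibration (using \textbf{M5}), then obtain the splittings by filling the evident lifting squares against $X\to *$ and $0\to Y$ via \textbf{M1}. The paper omits the explicit dual argument for $p$ and the verification that $Z$ is fibrant-cofibrant, which you spell out, but there is no substantive difference in strategy.
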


\begin{proof}
Let $X \mr{f} Y$ be such a weak equivalence. By axioms {\bf M2} and {\bf M5} we have 
$\vcenter
    {\xymatrix
       {
        X \ar[rr]_\sim ^f 
          \ar@{^{(}->}[rd]_i
          \ar@<-1ex>@{}[rd]^[@]{\sim} 
       & 
           \ar@{}[d]|{\cong} 
       & Y 
      \\ 
       & Z \ar@{->>}[ru]_p
           \ar@<-1ex>@{}[ru]^[@]\sim
       }
    }
$. The last statement of the proposition clearly holds by axiom {\bf M3}. We will show that $i$ is a section, dually it follows that $p$ is a retraction. Using axiom {\bf M1}, we have $\vcenter{\xymatrix{X \ar[r]^{id_X} \ar[d]_i & X \ar[d] \\ Z \ar[r] \ar@{.>}[ru]_r^\cong & 1}}$ as desired.
\end{proof}

\section{The homotopy bicategory of a model bicategory} \label{sub:descripcionHoC}

{This section depends heavily on the results of \cite{DDS.loc_via_homot}, and we recommend the reader to have at hand this reference. Nevertheless we will recall here some of the content of \cite{DDS.loc_via_homot} to improve readability.} 
We fix a model bicategory $\cc{C}$. 
We will define a notion of $q$-homotopy between arrows of $\C$, which is analogous to Quillen's notion of homotopy for model categories. 
Our objective is to construct a bicategory $\HoC$ whose objects and arrows are those of $\C_{fc}$, and whose 2-cells are classes of $q$-homotopies under an equivalence relation, 
{together with a pseudofunctor $\C \mr{q} \HoC$ which is the $2$-dimensional localization of $\C$  with respect to the class $\cc{W}$ of weak equivalences in the sense of the following universal property: 
$\C \mr{q} \HoC$ sends weak equivalences to equivalences and 
 $q$-precomposition $Hom(\HoC,\cc{D}) \mr{q^*} Hom_{\cc{W},\Theta}(\C,\cc{D})$ 
determines a biequivalence of bicategories for every bicategory $\cc{D}$,  where $Hom_{\cc{W},\Theta}(\C,\cc{D})$ stands for the full sub-bicategory of $Hom(\C,\cc{D})$ given by those pseudofunctors that send weak equivalences into equivalences 
(Theorem~\ref{teo:localizationformodel})}. 

The localization of a bicategory at a family of arrows was first considered in \cite{PRONK2} by means of a 2-dimensional version of Gabriel-Zisman's calculus of fractions.
There it is considered a bicategory $Hom_{\cc{W}}(\C,\cc{D})$ with the same objects as $Hom_{\cc{W},\Theta}(\C,\cc{D})$, but whose arrows are the pseudonatural transformations that map the arrows of $\cc{W}$ to equivalences (when interpreted as pseudofunctors from $\C$ into a cylinder bicategory).
However, we have recently noticed that a simple verification shows that 
$Hom_{\cc{W}}(\C,\cc{D}) = Hom_{\cc{W},\Theta}(\C,\cc{D})$ (more details can be found in \cite[Remark 3.7]{PrSc}), so 
the universal property that can be found in \cite[Th. 21]{PRONK2}, involving $Hom_{\cc{W}}(\C,\cc{D})$, is indeed the same one of Theorem \ref{teo:localizationformodel} but stated differently.

We will introduce $q$-homotopies as $w$-homotopies (a notion which depends only of the class $\cc{W}$) that satisfy some extra conditions with respect to the classes $\cc{F}$ and $co\cc{F}$.
This allows to use previous results of \cite{DDS.loc_via_homot}, where for an arbitrary pair $(\cc{A},\Sigma)$ given by a family $\Sigma$ of arrows of a bicategory $\cc{A}$ there is a construction of a bicategory $\HoA$ whose 2-cells are formed with $w$-homotopies. However, unless a vertical composition can be defined, the 2-cells of $\HoA$ consist of the classes of finite composable sequences of $w$-homotopies.
 
In \S \ref{sub:compvert} we will show that $q$-homotopies can be vertically composed, and 
in \S \ref{sub:relating} we will show that for any $w$-homotopy between arrows of $\C_{fc}$ there is a $q$-homotopy in the same class. This will allow to give $\HoC$ the bicategory structure of $\Wo$.

\begin{definition} \label{def:whomot} \label{lhpy}
We consider any pair $(\C,\cc{W})$ given by a family that we call {\em weak equivalences} of a bicategory $\C$. Let $X \in \C$. 
A \emph{$w$-cylinder} $C$ (\emph{for} $X$) is given by the data $C = (W,Z,d_0,d_1,x,s,\alpha_0,\alpha_1)$, 
	fitting in $$\vcenter{\xymatrix@C=1.5pc@R=1.5pc{X \ar[rr]^{d_0} \ar[dd]_{d_1} \ar[rrdd]|{\comw{M^M} x \comw{M^M}} & \ar@{}[dr]|{\cong \; \Downarrow \; \alpha_0 \;} & W \ar[dd]^[@]{\sim}_{s}\\
			\ar@{}[dr]|{ \; \cong \; \Uparrow \; \alpha_1} && \\ 
			W \ar[rr]_{\sim}^{s}  && Z}}$$ {($s$ is a weak equivalence, $\alpha_0$ and $\alpha_1$ are invertible 2-cells)}. 
We denote $\alpha_1^{-1} \circ \alpha_0$ by $\alpha$.

\vspace{1ex}

If $(\C,\cc{W})$ is part of a model bicategory structure, then a \emph{$q$-cylinder} is a $w$-cylinder such that $Z = X$, $x=id_X$, and the arrow $\binom{d_0}{d_1}: X \amalg X \mr{} W$ is a cofibration. In this case we write $C = (W,d_0,d_1,s,\alpha_0,\alpha_1)$.

\vspace{1ex}

Let $f,g: X \rightarrow Y\in \C$. A \emph{left $w$-homotopy $H$ from $f$ to $g$}, which we will denote by $f \mrhpy{H} g$, is given by the data  
	$H = (C,h,\eta,\eps)$, where $C$ is a $w$-cylinder for $X$ as above, $h$ is an arrow $W \mr{h} Y$ and $\eta,\eps$ are 2-cells $f \Mr{\eta} h * d_0$, $h * d_1 \Mr{\eps} g$. 
{We organize these data as follows:}

\vspace{-4ex}

{
\begin{equation} \label{eq:H}
\begin{tabular}{cc}
$f \mrhpy{H} g$: \hspace{3ex}  
$\vcenter{\xymatrix{X \ar@<-.5ex>[dr]_{x} \ar@<1ex>[r]^{d_0} 
             \ar@<-1ex>[r]_{d_1} & W \ar[r]^h \ar[d]^[@]\sim_s & Y \\ & Z}}$ &
\begin{tabular}{c}
$f \Mr{\eta} h \ast d_0$ \\
$s \ast d_0 \Mr{\alpha_0} x \Ml{\alpha_1} s \ast d_1$ \\
$h \ast d_1 \Mr{\eps} g$ 
\end{tabular}
\end{tabular}
\end{equation}
}
{We say that $H$ \emph{has invertible cells} if $\eta$ and $\varepsilon$ are invertible.}	

\vspace{1ex}

A \emph{left $q$-homotopy} is a left $w$-homotopy in which $C$ is a $q$-cylinder. 

\vspace{1ex}

We say that a $w$-cylinder (in particular a $q$-cylinder) is \emph{fibrant} if the arrow $s$ is a fibration. We use the same terminology for left $w$-homotopies.
\end{definition}

\noindent 
Note that the abuse of saying ``the arrow  $\binom{d_0}{d_1}$ is a cofibration" is justified by axiom {\bf M3}.

\vspace{1ex}

{
We record for convenience the dual structures of right 
$w$- and $q$-homotopies. These notions arise from considering the \emph{opposite model structure} on the bicategory $\C^{op}$. 

\begin{definition} \label{sin:rightsigmahomot} \label{rhpy}
A \emph{$w$-path-object} $P$ (\emph{for} an object $Y$) is given by the data \mbox{$P = (V,Z,c_0,c_1,y,t,\beta_0,\beta_1)$,} 
	fitting in 
$$\vcenter
{
\xymatrix@C=1.5pc@R=1.5pc
      {
       Z \ar[rr]^{t \;\; \sim} 
         \ar[dd]^{t}_[@]{\sim} 
         \ar[rrdd]|{\comw{M^M} y \comw{M^M}}
      &  \ar@{}[dr]|{\cong \; \Downarrow \; \beta_0 \;} 
      & V \ar[dd]^{c_0}
      \\
		\ar@{}[dr]|{ \; \cong \; \Uparrow \; \beta_1} 
	  && 
	  \\ 
		V \ar[rr]^{c_1}  
	  && Y
	  }
}
$$ 
We denote $\beta_1^{-1} \circ \beta_0$ by $\beta$.
A \emph{$q$-path-object} is a $w$-path-object such that $Z = Y$, $y=id_Y$, and the arrow $(c_0,\,c_1): V \mr{} Y$ is a fibration. In this case we write $P = (V,c_0,c_1,t,\beta_0,\beta_1)$.

\vspace{1ex}

A \emph{right $w$-homotopy $K$ from $f$ to $g$}, which we will denote by $f \mrhpy{K} g$, is given by the data  
	$K = (P,k,\delta,\epsilon)$, where $P$ is a $w$-path-object for $Y$, $k$ is an arrow $X \mr{k} V$ and $\delta,\epsilon$ are 2-cells $f \Mr{\delta} c_0 \ast k$, 
$c_1\ast k \Mr{\epsilon} g$.

A \emph{right $q$-homotopy} is a right $w$-homotopy in which $P$ is a $q$-path-object. 

{We say that a $w$-path-object (in particular a $q$-path-object) is \emph{cofibrant} if the arrow $t$ is a cofibration. We use the same terminology for right $w$-homotopies.}
\end{definition}
}
In this section we will work only with left $w$-homotopies, and thus omit to specify it.

\begin{remark} \label{rem:existencilindros}
For any $X\in\C$, we can construct a fibrant $q$-cylinder for $X$ as follows. 
We use axiom {\bf M2} and factorize 
$$\vcenter{\xymatrix{X \amalg X \ar[rr]^{\nabla_X} \ar@{^{(}->}[rd]_{\binom{d_0}{d_1}} && X \\
& W \ar@{->>}[ru]_s
\ar@<-1ex>@{}[ru]^[@]\sim \ar@{}[u]|>>>>{ \Uparrow \, \cong}}}$$
By the universal property of the coProduct 
$\binom{s * d_0}{s * d_1} \Mr{} \nabla_X$ corresponds to two invertible 2-cells $s * d_0 \Xr{\alpha_0} id_X$, $s * d_1 \Xr{\alpha_1} id_X $ and thus we have a fibrant $q$-cylinder $C = (W,d_0,d_1,s,\alpha_0,\alpha_1)$.
\end{remark}

The concepts of $q$-cylinder and $q$-homotopy are the bicategorical analogues of Quillen's notions in \cite{Quillen}. The following is the bicategorical equivalent of Quillen's \cite[1, Lemma 2]{Quillen}.

\begin{lemma}\label{A cofibrant da d y c cofibraciones triviales}
Let $X\in\C$ be a cofibrant object,  
and let $C$ be a $q$-cylinder for $X$ as in Definition \ref{def:whomot}. Then 
 both $d_0$ and $d_1$ are trivial cofibrations, and $W$ is a cofibrant object.
\end{lemma}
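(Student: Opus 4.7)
The plan is to deduce both statements directly from the axioms, in the following order: first that $W$ is cofibrant, then that $d_0$ (and symmetrically $d_1$) is a cofibration, and finally that each $d_i$ is a weak equivalence.

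To see that $W$ is cofibrant, I use that $X$ is cofibrant, so $0 \mr{} X$ is a cofibration. Since the coProduct $X \amalg X$ is the Pushout of $0 \mr{} X$ along itself (see \ref{sin:limits}), axiom {\bf M3} yields that each injection $X \mr{i_j} X \amalg X$ is a cofibration; composing with $0 \mr{} X$ we get that $0 \mr{} X \amalg X$ is a cofibration. By the definition of $w$-cylinder, $\binom{d_0}{d_1} : X\amalg X\mr{} W$ is a cofibration as well, and another application of {\bf M3} (closure of cofibrations under composition) gives that $0 \mr{} W$ is a cofibration, i.e.\ $W$ is cofibrant.

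For $d_0$, recall from \ref{sin:limits} that there is an invertible 2-cell $d_0 \cong \binom{d_0}{d_1} \ast i_0$. Both factors on the right are cofibrations (the first by definition of $w$-cylinder, the second by the previous paragraph), so by {\bf M3} their composition is a cofibration, and since cofibrations are closed under invertible 2-cells (also {\bf M3}), $d_0$ is a cofibration. To see $d_0$ is a weak equivalence, I invoke the invertible 2-cell $\alpha_0 : s \ast d_0 \cong id_X$ part of the data of $C$. The identity $id_X$ is an equivalence, hence a weak equivalence by {\bf M5}, and $s$ is a weak equivalence by definition of $w$-cylinder; the ``3 for 2'' axiom {\bf M5} then forces $d_0$ to be a weak equivalence. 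Thus $d_0$ is a trivial cofibration, and the argument for $d_1$ is identical, using $i_1$ and $\alpha_1$ instead. \cqd

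There is essentially no obstacle here: once one tracks that the axioms {\bf M3} and {\bf M5} have been phrased ``up to invertible 2-cell'' so that the abuses of notation in \ref{sin:limits} and in Definition~\ref{def:whomot} (the arrow $\binom{d_0}{d_1}$, the identification $d_0 \cong \binom{d_0}{d_1}\ast i_0$) cause no trouble, the proof is a verbatim transcription of the classical 1-categorical argument.
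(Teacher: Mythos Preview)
Your proof is correct and follows essentially the same approach as the paper's own proof: both use that $i_0$ is a cofibration (via the coProduct as a Pushout of $0\mr{}X$ and axiom {\bf M3}), that $d_0\cong\binom{d_0}{d_1}\ast i_0$ is therefore a cofibration, and that {\bf M5} applied to $s\ast d_0\cong id_X$ yields the weak equivalence part. The only differences are cosmetic: you treat the cofibrancy of $W$ first and spell out the pushout argument for $i_0$ and the composite $0\mr{}W$ more explicitly, whereas the paper handles $d_0,d_1$ first and leaves those steps as one-liners.
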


\begin{proof}
	By definition of $X\amalg X$, the fact that $X$ is cofibrant and axiom {\bf M3}, we have that 
$X \mr{i_0} X \amalg X$ is a cofibration. Then, since 
$d_0 \cong \binom{d_0}{d_1} \ast i_0$, by axiom {\bf M3} $d_0$ is a cofibration. Also, since $s\ast d_0 \cong id_X$, by axiom {\bf M5} 
$d_0$ is a weak equivalence. A similar reasoning shows that $d_1$ is also a trivial cofibration. 
Since $X$ is cofibrant, so is $W$. 
\end{proof} 

\begin{corollary} \label{coro:fibrantwhomotdafibcofibW}
Let $f,g: X \rightarrow Y\in \C$, and $f \mrhpy{H} g$ be a fibrant $q$-homotopy as in Definition \ref{def:whomot}. If $X$ is cofibrant and $Y$ is fibrant, then $W$ is a fibrant-cofibrant object. \qed
\end{corollary}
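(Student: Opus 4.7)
The plan is to verify the two halves (fibrant and cofibrant) of the claim independently, since each is a direct consequence of results already at hand.

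For cofibrancy of $W$, I simply invoke Lemma \ref{A cofibrant da d y c cofibraciones triviales}: by definition the $w$-cylinder underlying $H$ comes equipped with the cofibration $\binom{d_0}{d_1}$, and since $X$ is cofibrant, the lemma already hands us that $W$ is cofibrant (in fact, it yields the bonus that $d_0,d_1$ are trivial cofibrations, but I only need the cofibrancy of $W$ here).

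For fibrancy of $W$, I read off from Definition \ref{def:whomot} that ``fibrant $w$-homotopy'' is exactly the statement that the structural weak equivalence $s$ is a fibration; and since in any $w$-cylinder one has $Z = X$, this $s$ is a fibration $W \twoheadrightarrow X$. Composing with the fibration $X \to *$ produces $W \xrightarrow{s} X \to *$ as a composition of fibrations, and axiom \textbf{M3} (closure of fibrations under composition) then gives that $W \to *$ is a fibration, i.e. $W$ is fibrant.

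I do not anticipate any real obstacle: the corollary reduces to a one-line consequence of Lemma \ref{A cofibrant da d y c cofibraciones triviales} together with axiom \textbf{M3}, and the only verification required is matching the arrow $s$ in the fibrant $w$-cylinder data to the fibration hypothesis so that it may be composed with the fibration at the target endpoint. This is presumably why the authors end the statement with \cqd.
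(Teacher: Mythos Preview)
Your cofibrancy argument is correct and is exactly what the paper intends: Lemma~\ref{A cofibrant da d y c cofibraciones triviales} gives $W$ cofibrant from $X$ cofibrant.

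There is, however, a genuine gap in your fibrancy argument. You write ``composing with the fibration $X \to *$'', but the hypothesis of the corollary only says that $X$ is \emph{cofibrant}; nothing in the statement tells you $X \to *$ is a fibration. The hypothesis that $Y$ is fibrant does not rescue this, since the only arrow $W \to Y$ available is $h$, which has no reason to be a fibration, so you cannot route through $Y$ either.

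This gap in fact mirrors an imprecision in the corollary as stated. The sentence introducing it speaks of ``objects of $\C_{fc}$'', and its sole use in the paper (in the proof of Theorem~\ref{teo:localizationformodel}) is for a fibrant $w$-homotopy of $\HoC = \Wofc$, where $X$ and $Y$ are both fibrant-cofibrant. Under that stronger assumption your composite $W \xrightarrow{s} X \to *$ is indeed a composition of fibrations and axiom~\textbf{M3} applies. So the correct repair is to note that one really needs $X$ fibrant (not $Y$) for the fibrancy half; once you assume $X \in \C_{fc}$, your argument is the intended one-line proof.
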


We {consider} now an equivalence relation between $w$-homotopies (in particular $q$-homotopies) in a way such that its classes will form the 2-cells of a bicategory. This relation {is defined} in \cite{DDS.loc_via_homot}, we recall it now:

\begin{sinnadastandard} \label{sin:varios1}
	We say that an arrow $X \mr{f} Y$ of a bicategory is a {\em quasiequivalence}, a concept weaker than that of equivalence, if for every object $Z$ the functors $\C(Z,X) \mr{f_*} \C(Z,Y)$ and $\C(Y,Z) \mr{f^*} \C(X,Z)$ are full and faithful.

	The $w$-homotopies can be thought of something that would be an actual $2$-cell if the weak equivalences were quasiequivalences. 
	More precisely, given a $w$-homotopy $f \mrhpy{H} g$ as in Definition \ref{def:whomot} for which $s$ is a quasiequivalence, we can associate to it the $2$-cell $f \Xr{\;\;\widehat{H}\;\;} g$ given by the composition 
	\mbox{$f \Mr{\eta} h \ast d_0 \Xr{h \ast \widehat{C}} h \ast d_1 \Mr{\eps} g$,} where $d_0 \Mr{\widehat{C}} d_1$ is the unique $2$-cell such that $s \ast \widehat{C} = {\alpha}$. For any 2-functor $\C \mr{F} \cc{D}$, we denote by $FH$ the structure obtained applying $F$ to each of the components of $H$.
In particular, if $F$ maps all the weak equivalences to quasiequivalences, it follows that there is a 2-cell $\widehat{FH}$ of $\cc{D}$ for each $w$-homotopy $H$.
\end{sinnadastandard}

For any bicategory $\cc{D}$, we denote by $(\C,\cc{W}) \mr{F} (\cc{D},q\Theta)$ a 2-functor which maps the weak equivalences to quasiequivalences\footnote{Working with the notion of quasiequivalence instead of equivalence is what allows us to consider here a 2-functor $F$ instead of a general pseudofunctor, see \cite[2.10]{DDS.loc_via_homot} for details.}.

\begin{definition} \label{def:notacionFyrelationHo}
 We say that two $w$-homotopies (in particular two $q$-homotopies) $\xymatrix{f \ar@2{~>}@<0.25ex>[r]^{H,K} & g}$ are in the same class if $\widehat{FH} = \widehat{FK}$ for all $(\C,\cc{W}) \mr{F} (\cc{D},q\Theta)$ (for all bicategories $\cc{D}$). Note that the construction in \ref{sin:varios1} can be easily dualized for a right $w$-homotopy, and thus this definition  makes sense when one of $H, K$ or both are right $w$-homotopies.
\end{definition}

\begin{remark} \label{rem:soloimportaalphatilde}
	It is the composition ${\alpha} = \alpha_1^{-1} \circ \alpha_0$ which is used in order to determine the class of a $w$-homotopy. Different pairs of $2$-cells $\alpha_0, \alpha_1$ can yield the same 2-cell ${\alpha}$. As an example (which will be relevant later) consider any $w$-homotopy  $\xymatrix{f \ar@2{~>}@<0.25ex>[r]^{H} & g}$ as in Definition~\ref{def:whomot}, and define $w$-cylinders for $X$, 
	$$C_0 = (W,Z,d_0,d_1,s*d_0,s,s*d_0,{\alpha}^{-1}), \quad
	C_1 = (W,Z,d_0,d_1,s*d_1,s,{\alpha},s*d_1),$$
     and  $w$-homotopies 
     $\xymatrix{f \ar@2{~>}@<0.25ex>[r]^{H_i} & g}$,
	$H_i = (C_i,h,\eta,\eps)$, $i=0,1$. In particular $[H] = [H_0] = [H_1]$.
\end{remark}

\begin{sinnadastandard} \label{sin:horiz_comp}
{\bf On the composition of homotopies with arrows.}
Consider arrows $X \mrr{f}{g} Y$ and a $w$-homotopy $H = (C,h,\eta,\eps): f \mrhpy{} g$, as in \eqref{eq:H}. We want to define the composition of $H$ with an arrow $Y \mr{r} Y'$ (on the right) and with an arrow $X' \mr{\ell} X$ (on the left):

$$
\begin{tabular}{cc}
$\vcenter{\xymatrix{X' \ar[r]^{\ell} & X \ar@<-.5ex>[dr]_{x} \ar@<1ex>[r]^{d_0} 
             \ar@<-1ex>[r]_{d_1} & W \ar[r]^h \ar[d]^[@]{\sim}_s & Y \ar[r]^{r} & Y' \\ & & Z}}$ &
\begin{tabular}{c}
$f \Mr{\eta} h \ast d_0$ \\
$s \ast d_0 \Mr{\alpha_0} x \Ml{\alpha_1} s \ast d_1$ \\
$h \ast d_1 \Mr{\eps} g$ 
\end{tabular}
\end{tabular}
$$

Given $Y \mr{r} Y'$ we can define $r * H = (C,r * h,r * \eta,r * \eps): r * f \mrhpy{} r * g$, and it is immediate that $\widehat{F(r \ast H)} = Fr \ast \widehat{FH}$ 
for any $(\C,\cc{W}) \mr{F} (\cc{D},q\Theta)$ as in Definition~\ref{def:notacionFyrelationHo}
(see \cite[Prop. 3.20, 3]{DDS.loc_via_homot} for a proof). Note that $r * H$ has the same $w$-cylinder $C$ as $H$. In particular, 
if $H$ is a $q$-homotopy, then so is $r * H$, and thus this can be used to define a composition $r * [H] = [r * H]$ in $\HoC$, which will be $\cc{H}o_{fc}(\C,\cc{W})
\subset \cc{H}o(\C,\cc{W})$, the full sub-bicategory of the fibrant-cofibrant objects, Definition \ref{sin:defdeHoCcomoHofc} below.

Given $X' \mr{\ell} X$, we can define a $w$-cylinder $C * \ell = (W,d_0 * \ell ,d_1 * \ell ,s,\alpha_0 * \ell ,\alpha_1 * \ell )$ for $X'$, note that $C * \ell$ is not in general a $q$-cylinder, even if $C$ was. 
We can also define a 
$w$-homotopy $H * \ell = (C * \ell, h, \eps * \ell, \eta * \ell)
$ and show that $\widehat{F(H * \ell)} = \widehat{FH} \ast F\ell$ for any $(\C,\cc{W}) \mr{F} (\cc{D},q\Theta)$ (see \cite[Prop. 3.20, 4]{DDS.loc_via_homot}). 
But, if $H$ is a $q$-homotopy it is not clear how to define a $q$-homotopy $H \ast \ell$ satisfying this equation. In the 1-dimensional case \cite{Quillen}, this is solved using the dual notion of right homotopy, but here we take a different approach. In \S \ref{sub:relating} below we will show that, for fibrant-cofibrant objects, any $w$-homotopy admits a $q$-homotopy in the same class. Thus this is the way in which the composition $[H] * \ell$ in $\HoC$ is defined, as a $q$-homotopy in the same class of $H * \ell$.
\end{sinnadastandard}

\begin{sinnadastandard} \label{sin:varios2} {\bf On the bicategory of $w$-homotopies.}
For an arbitrary pair $(\C,\cc{W})$ as in Definition \ref{def:whomot}, the definition in \ref{sin:varios1} can be extended to finite sequences of composable $w$-homotopies 
$\xymatrix{f \ar@2{~>}@<0.25ex>[r]^{H^1} & f_1 \ar@2{~>}@<0.25ex>[r]^>>>>>>{H^2} & f_2 \dotsb f_{n-1} \ar@2{~>}@<0.25ex>[r]^>>>>>>{H^n} & g}$
and in this way a bicategory $\Wo$ is defined. Its objects and arrows are those of $\C$, the 2-cells are the classes of finite sequences of composable $w$-homotopies, 
{$[H^n,\,\ldots \,,H^2,\, H^1]$}.
Horizontal composition (whiskering) is defined as in \ref{sin:horiz_comp} 
and vertical composition is given by juxtaposition.
Together with $\Wo$ there is a {\em projection} 2-functor $\C \mr{i} \Wo$, which is the identity on objects and arrows and maps a 2-cell $\mu$ of $\C$ to the class of a $w$-homotopy $I^{\mu}$ which satisfies that $\widehat{FI^\mu} = F\mu$ for any $(\C,\cc{W}) \mr{F} (\cc{D},q\Theta)$. 
We refer {the interested reader} to \cite{DDS.loc_via_homot} for more details on this construction. 
\end{sinnadastandard} 

{ 
We mention that $I^\mu$ can be chosen to be either one of the following two $w$-homotopies:
\begin{center}               
\begin{tabular}{lc}
$H^{\mu}_0: \vcenter{\xymatrix{X \ar@<-.5ex>[dr]_{id_X} \ar@<1ex>[r]^{id_X} 
             \ar@<-1ex>[r]_{id_X} & X \ar[r]^g \ar[d]^{id_X} & Y \\ & X}} \quad\quad$ & 
             $H^{\mu}_1: \vcenter{\xymatrix{X \ar@<-.5ex>[dr]_{id_X} \ar@<1ex>[r]^{id_X} 
             \ar@<-1ex>[r]_{id_X} & X \ar[r]^f \ar[d]^{id_X} & Y \\ & X}} \quad\quad$ \\
$\eta = \mu$, $\alpha_0 = \alpha_1 = id_X$, $\eps = g \quad\quad$   
&
$\quad\quad \eta = f$, $\alpha_0 = \alpha_1 = id_X$, $\eps = \mu$
\end{tabular}
\end{center}

\begin{remark} \label{choose_I^mu}
It clearly follows that $I^{\mu}$ can be choosen to be a fibrant 
$w$-homotopy, such that if $\mu$ is invertible, $I^\mu$ has invertible cells.
\end{remark} 
}

A dual construction allows to define a right $w$-homotopy $I^\mu$ such that $\widehat{FI^\mu} = F\mu$ for any $(\C,\cc{W}) \mr{F} (\cc{D},q\Theta)$. It follows:

\begin{remark}  \label{Imuleft=Ymurigth}
{The $w$-homotopy $I^\mu$ can be considered to be a left or a right $w$-homotopy as needed.} 
\end{remark}

\begin{remark} \label{def:construccionK}
Any $w$-cylinder $C$ as in Definition \ref{def:whomot} determines a $w$-homotopy with invertible cells:
\begin{center}
\begin{tabular}{cc}
$d_0 \mrhpy{H^{C}} d_1$: \hspace{3ex}  
$\vcenter{\xymatrix{X \ar[rd]_x \ar@<1ex>[r]^{d_0} 
            \ar@<-1ex>[r]_{d_1} & W \ar[r]^{id_W} \ar[d]^[@]{\sim}_s & W \\ & Z}}$ &
\begin{tabular}{c}
$d_0 \Xr{d_0} d_0$ \\
$s \ast d_0 \Mr{\alpha_0} x \Ml{\alpha_1} s \ast d_1$ \\
$d_1 \Xr{d_1} d_1$ 
\end{tabular}
\end{tabular}
\end{center} 
\end{remark} 

{
\begin{remark} \label{2compoH}
[Composing homotopies with 2-cells]
Consider a $w$-homotopy 
$f \mrhpy{H} g$ as in \eqref{eq:H}, and 2-cells
$f' \Mr{\nu} f$, $g \Mr{\mu} g'$. Simply composing $\nu$ with $\eta$, and $\mu$ with $\eps$, yields a new $w$-homotopy 
$f' \mrhpy{H'} g'$, $H' = \mu \circ H \circ \nu$, with the same cylinder as $H$, satisfying 
$[H'] = [I^\mu] \circ [H] \circ [I^\nu] 
                               = [I^\mu,\, H,\, I^\nu]$.  
Details can be found in \cite[3.6-3.7]{DDS.loc_via_homot} if needed.
\end{remark}
}

We recall now {a basic decomposition result for $w$-homotopies} proved in  \cite{DDS.loc_via_homot} that we will use later.

 \begin{proposition} \label{prop:Hcompde3} 
 (\cite[Proposition 3.31]{DDS.loc_via_homot})
Let $H$ be any $w$-homotopy as in Definition \ref{def:whomot}. Then $[H]$ can be decomposed as:
$$
[H] = [\eps \circ (h * H^C) \circ \eta] = 
[I^\eps] \circ [h * H^C] \circ [I^\eta] = 
[I^\eps,\, h * H^C,\, I^\eta].
$$

\vspace{-4ex}

\cqd
\end{proposition}

\begin{proposition} \label{invertibleH}
(\cite[Corollary 3.33]{DDS.loc_via_homot})
The class $[H]$ of any $w$-homotopy with invertible cells is an invertible $2$-cell in $\Ho$. 
\cqd
\end{proposition}

\begin{lemma}\label{2-cell da homotopia posta}
	Let $X \cellrdE{f}{\mu}{g} Y\in\C$ be a 2-cell and $C = (W,d_0,d_1,s,\alpha_0,\alpha_1)$ be a \mbox{$q$-cylinder} for $X$. Then we can construct $\homotopy{f}{I^\mu}{g}$ with cylinder $C$; moreover, if $\mu$ is invertible then $I^\mu$ has invertible cells. 
\end{lemma}
\begin{proof}
Let \mbox{$H=(C,f * s, f*\alpha_0^{-1}, \mu \circ f*\alpha_1)$.} To verify $H = I^\mu$, let $\C \mr{F} \cc{D}$ be a 2-functor that maps the weak equivalences to equivalences, then $\widehat{FH}$ is the composition
$$Ff \Xr{Ff * F\alpha_0^{-1}} Ff * Fs * Fd_0  \Xr{Ff * Fs * \widehat{FC}} Ff * Fs * Fd_1 \Xr{Ff * F\alpha_1} Ff \Xr{F\mu} Fg.$$

Since by definition we have $Fs * \widehat{FC} = F\alpha_1^{-1} \circ F\alpha_0$, the composition clearly reduces to $F\mu$ as desired.
\end{proof}

For an arbitrary pair $(\C,\cc{W})$, $i$ is not the localization of $\C$ at $\cc{W}$, since 
$i(f)$ will not be in general an equivalence for each weak equivalence. However $i(f)$ will always satisfy the ``faithful" part in the definition of quasiequivalence:

\begin{proposition} \label{prop:siemprefaithfulenHoC}
	For any weak equivalence $X \mr{f} Y$ and any object $Z$, the functors $\Wo(Z,X) \mr{f_*} \Wo(Z,Y)$ and $\Wo(Y,Z) \mr{f^*} \Wo(X,Z)$ \mbox{are faithful.}
\end{proposition}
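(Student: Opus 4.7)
The plan is to reduce faithfulness in $\Wo$ to the faithfulness clause in the definition of quasiequivalence (\ref{sin:varios1}) applied pointwise in all target bicategories, by using the characterization of the equivalence relation defining $2$-cells of $\Wo$ via all $2$-functors $(\C,\cc{W})\mr{F}(\cc{D},q\Theta)$.

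First I would fix two $2$-cells of $\Wo(Z,X)$, represented by finite composable sequences of $\sigma$-homotopies $\underline{H}=(H^1,\dotsc,H^n)$ and $\underline{K}=(K^1,\dotsc,K^m)$ between arrows $a,b\colon Z\to X$, and suppose that $f_*[\underline{H}]=f_*[\underline{K}]$ in $\Wo(Z,Y)$. By the definition of horizontal composition recalled in \ref{sin:horiz_comp}, $f_*\underline H=(f*H^1,\dotsc,f*H^n)$, and similarly for $\underline K$. By the extension to sequences of the equivalence relation of Definition \ref{def:notacionFyrelationHo} (see \ref{sin:varios2}), the assumed equality means that for every $(\C,\cc{W})\mr{F}(\cc{D},q\Theta)$ the two vertical composites
$$\widehat{F(f*H^1)}\circ\dotsb\circ\widehat{F(f*H^n)} \;=\; \widehat{F(f*K^1)}\circ\dotsb\circ\widehat{F(f*K^m)}$$
agree in $\cc{D}(FZ,FY)$.

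Next, I would use \ref{sin:horiz_comp} once more, which gives $\widehat{F(f*H^i)}=Ff*\widehat{FH^i}$ (and analogously for the $K^j$). Then, by functoriality of horizontal composition on the left with $Ff$ (i.e.\ the fact that $(Ff)_*\colon\cc{D}(FZ,FX)\mr{}\cc{D}(FZ,FY)$ is a functor), the two composites above equal $Ff*\bigl(\widehat{FH^1}\circ\dotsb\circ\widehat{FH^n}\bigr)$ and $Ff*\bigl(\widehat{FK^1}\circ\dotsb\circ\widehat{FK^m}\bigr)$ respectively.

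The key step is then to apply the hypothesis on $F$: since $f\in\cc{W}$, $Ff$ is a quasiequivalence, so by definition $(Ff)_*$ is full and \emph{faithful}. The displayed equality therefore forces $\widehat{FH^1}\circ\dotsb\circ\widehat{FH^n}=\widehat{FK^1}\circ\dotsb\circ\widehat{FK^m}$ for every such $F$, which by the definition of the equivalence relation on sequences means precisely $[\underline H]=[\underline K]$ in $\Wo(Z,X)$. The argument for $f^*$ is formally dual, using that $(Ff)^*$ is likewise faithful and that $\widehat{F(H*\ell)}=\widehat{FH}*F\ell$ from \ref{sin:horiz_comp}. I do not expect any serious obstacle here; the only thing to be careful about is to correctly unpack that the equivalence relation on sequences is characterized by all $F$, so that faithfulness in each target $\cc{D}$ can be transported back to an equality of classes in $\Wo$.
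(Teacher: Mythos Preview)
Your proof is correct and follows essentially the same approach as the paper: both use the identity $\widehat{F(f*H)}=Ff*\widehat{FH}$ from \ref{sin:horiz_comp} together with the faithfulness of $(Ff)_*$ coming from $Ff$ being a quasiequivalence. The paper is terser (it says ``it suffices to consider the case of a single $\sigma$-homotopy'' and then points to \ref{sin:horiz_comp}), while you spell out the argument for sequences explicitly; one cosmetic slip is that your vertical composites are written in the reversed order, but this does not affect the argument.
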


\begin{proof} 
We deal with $f_*$ first, we consider arrows $g,\,h$ and 2-cells 
$\alpha,\,\beta$ as follows $Z \cellpairrd{g}{\alpha}{\beta}{h} X$, which are determined by sequences of composable $w$-homotopies, 
\mbox{$\alpha = [H^n,\,\ldots \,,H^2,\, H^1]$,} $\beta = [K^n,\,\ldots \,,K^2,\, K^1]$.
We have to show that $f * \alpha = f * \beta$ implies $\alpha = \beta$.  
We note that by definition $f * \alpha = f * \beta$ means that for any $(\C,\cc{W}) \mr{F} (\cc{D},q\Theta)$ as in Definition~\ref{def:notacionFyrelationHo} we have
$$Ff * (
\widehat{FH_n} \circ ... \circ \widehat{FH_2} \circ \widehat{FH_1})
=
Ff * (
\widehat{FK_m} \circ ... \circ \widehat{FK_2} \circ \widehat{FK_1}).
$$
Since $Ff$ is a quasiequivalence, the equality 
$\widehat{FH_n} \circ ... \circ \widehat{FH_2} \circ \widehat{FH_1} = 
\widehat{FK_m} \circ ... \circ \widehat{FK_2} \circ \widehat{FK_1}$ holds for arbitrary $(\C,\cc{W}) \mr{F} (\cc{D},q\Theta)$, that is $\alpha = \beta$. The case of  $f^*$ is dual.
\end{proof}

\subsection{Vertical composition of $q$-homotopies} 
\label{sub:compvert}

A situation in which $w$-homotopies can be vertically composed was considered in \mbox{\cite[Appendix B]{DDS.loc_via_homot}:}

\begin{lemma} \label{lema:fittingparacompvert}
Assume that we have 
$
\xymatrix@C=7ex{X
\ar[r]^{f_1,f_2,f_3} & Y}$, and $w$-homotopies 
\mbox{$f_1 \mrhpy{H^1} f_2 \mrhpy{H^2} f_3$} as in Definition~\ref{def:whomot}, with 
$Z^1 = Z^2 = Z$, $x^1 = x^2 = x$
fitting in the following diagram, where 
{$\nu^1,\, \nu^2$, $\gamma^1$, $\gamma^2$} are invertible 2-cells, and {$s,s^1,s^2$ are weak equivalences}: 
             
\begin{equation} \label{diagram:fittingparacompvert}
\xymatrix@C=8ex{& W^1 \ar[rd]_{b^1} \ar@/^2ex/[rrd]^{s^1}_>>>>>>>>>>{\cong \, \nu^1} \ar@/^4ex/[rrrd]^{h^1}_>>>>>>>>>>>>>{\cong \,  \gamma^1} \\
X \ar[ru]^{d_1^1} \ar[rd]_{d^2_0} \ar@{}[rr]|{\Downarrow \delta} && W \ar[r]^{s} \ar@/_3ex/[rr]^>>>>>>>{h} & Z & Y \\
& W^2 \ar[ru]^{b^2} \ar@/_2ex/[rru]_{s^2}^>>>>>>>>>>{\cong \,  \nu^2} \ar@/_4ex/[rrru]_{h^2}^>>>>>>>>>>>>>{\cong \,  \gamma^2}}
\end{equation}

Assume also that:

\smallskip

The 2-cell $h^1 \ast d_1^1 \Mr{\eps^1} f_2 \Mr{\eta^2} h^2 \ast d^2_0$ equals 
$h^1 \ast d_1^1 \Xr{\gamma^1 \ast d_1^1} h \ast b^1 \ast d_1^1 \Xr{h \ast \delta} h \ast b^2 \ast d^2_0 \Xr{\gamma^2 \ast d^2_0} h^2 \ast d^2_0,$

\smallskip

The 2-cell 
$s^1 \ast d_1^1 \Xr{(\alpha^1_1)} x \Xr{(\alpha^2_0)^{-1} \!\!\!\!\!\!\!\!} s^2 \ast d^2_0$ equals 
$s^1 \ast d_1^1 \Xr{\nu^1 \ast d_1^1} s \ast b^1 \ast d_1^1 \Xr{s \ast \delta} s \ast b^2 \ast d^2_0 \Xr{\nu^2 \ast d^2_0} s^2 \ast d^2_0.$

\vspace{2ex}

Then we can construct a $w$-homotopy $H$ from $f_1$ to $f_3$ such that $[H] = [H^2,H^1]$.

\medskip

Furthermore, $H$ can be constructed as follows.
Take the $w$-cylinder \mbox{$C = (W,Z,b^1 * d^1_0, b^2 * d_1^2, x, s, \alpha_0, \alpha_1)$,} with $\alpha_0$ and $\alpha_1$ defined as the compositions
$$\alpha_0: s * b^1 * d^1_0 \Xr{\nu^1 * d^1_0} s^1 * d^1_0 \Xr{\alpha^1_0} x, \quad \quad 
\alpha_1: s * b^2 * d_1^2 \Xr{\nu^2 * d_1^2} 
s^2 * d_1^2 \Xr{\alpha^2_1} x 
$$
Then $H$ is given by $H = (C,h,\eta,\eps)$, with
$\eta$ and $\eps$ defined as the compositions
$$\eta: f_1 \Mr{\eta^1} h^1 \ast d^1_0 \Xr{\gamma^1 \ast d^1_0} h \ast b^1 \ast d^1_0, \quad \quad \eps: h \ast b^2 \ast d_1^2 \Xr{\gamma^2 \ast d_1^2} h^2 \ast d_1^2 \Mr{\eps^2} f_3.$$ 

\vspace{-4ex}

\qed
\end{lemma}

We will compose $w$-homotopies with cylinders with the same 
$Z$ and $x$ (in particular $q$-homotopies) by showing that they can be made to fit this situation, and in the case of $q$-homotopies, that the resulting $w$-homotopy is in fact a $q$-homotopy. 
This is a generalization of Quillen's construction of  \cite[Lemmas 3,4]{Quillen} to bicategories, which will be possible if we assume a further condition in the definition of model bicategory, namely the additional axioms (recall \ref{sin:limits}):

\vspace{1ex}

\noindent {\bf MM0.} $\cc{C}$ has lax-biPullbacks and lax-biPushouts.
\vspace{1ex}

\noindent {\bf MM3.} Fibrations (respectively cofibrations) are closed under lax-biPullbacks (respectively lax-biPushouts).

\vspace{1ex}

\noindent {\bf MM4.} If a morphism $f$ is the lax-biPullback (resp. lax-biPushout) of a fibration (resp. cofibration) which is also a weak equivalence, then $f$ is a weak equivalence.
\vspace{1ex}

Note that these axioms hold automatically if every 2-cell of $\cc{C}$ is invertible. Also note that, since we are working with left homotopies, we will use only the lax-biPushout case of these axioms.
\begin{lemma} \label{lemacompo}
	Given $X\in\C$ cofibrant, arrows $
	\xymatrix
	{
		X
		\ar@<2.2ex>[r]^{f_1} 
		\ar[r]^{f_2}
		\ar@<-2.2ex>[r]^{f_3} 
		& 
		Y
	}\in\C$    
	and \mbox{$w$-homotopies} 
	$
	\xymatrix
	{
		f_1 \ar@2{~>}@<0.25ex>[r]^{H^1} 
		& 
		f_2 \ar@2{~>}@<0.25ex>[r]^{H^2} 
		& f_3
	}
	$,
with $Z^1 = Z^2 = Z$, $x^1 = x^2 = x$, 
there exists a $w$-homotopy $H$ from $f_1$ to $f_3$ such that $[H] = [H^2,H^1]$. If $H^1$ and $H^2$ are $q$-homotopies, so is 
$H$. 

Note that this means that for any 2-functor $\C \mr{F} \cc{D}$ that maps the weak equivalences to quasiequivalences, $\widehat{FH} = \widehat{FH^2} \circ \widehat{FH^1}$.
\end{lemma}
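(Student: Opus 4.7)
My plan is to reduce to Lemma~\ref{lema:fittingparacompvert} by constructing an appropriate $w$-cylinder $W$ mediating between $W^1$ and $W^2$, together with the data $b^i, s, h, \delta, \nu^i, \gamma^i$ fitting its hypotheses, and then checking that the $\sigma$-homotopy produced by that lemma is in fact a $w$-homotopy.

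First I will form $W$ as the coComma object of $d_1^1: X \mr{} W^1$ and $d_0^2: X \mr{} W^2$, which exists by axiom {\bf MM0}, obtaining canonical arrows $b^i: W^i \mr{} W$ and the universal 2-cell $\delta: b^1 \ast d_1^1 \Mr{} b^2 \ast d_0^2$. Since $X$ is cofibrant, Lemma~\ref{A cofibrant da d y c cofibraciones triviales} ensures that all four arrows $d_0^j, d_1^j$ ($j=1,2$) are trivial cofibrations; axioms {\bf MM3} and {\bf MM4} then yield that $b^1$ and $b^2$ are also trivial cofibrations. I will then invoke the universal property of the coComma twice: once on the pair $(s^1, s^2)$ together with the connecting 2-cell $(\alpha_0^2)^{-1} \circ \alpha_1^1: s^1 \ast d_1^1 \Mr{} s^2 \ast d_0^2$ to produce $s: W \mr{} X$ with invertible 2-cells $\nu^i: s \ast b^i \cong s^i$; and once on $(h^1, h^2)$ with connecting 2-cell $\eta^2 \circ \eps^1: h^1 \ast d_1^1 \Mr{} h^2 \ast d_0^2$ to produce $h: W \mr{} Y$ with $\gamma^i: h \ast b^i \cong h^i$. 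By the very construction of $s$ and $h$, conditions 1 and 2 of Lemma~\ref{lema:fittingparacompvert} hold; and the 3-for-2 axiom {\bf M5} applied to $s \ast b^2 \cong s^2$ shows that $s$ is a weak equivalence.

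The main obstacle will be to verify that the cylinder $C = (W, b^1 \ast d_0^1, b^2 \ast d_1^2, s, \alpha_0, \alpha_1)$ produced in Lemma~\ref{lema:fittingparacompvert} is genuinely a $w$-cylinder, i.e.\ that $\binom{b^1 \ast d_0^1}{b^2 \ast d_1^2}: X \amalg X \mr{} W$ is a cofibration. My plan for this is to present $W$ as an iterated pushout built on a three-fold coproduct $X \amalg X \amalg X$: attach $W^1$ along $\binom{d_0^1}{d_1^1}$ onto the first two summands, then attach $W^2$ along $\binom{d_0^2}{d_1^2}$ onto the second and third summands (the middle summands being linked by $\delta$ via the universal property of the coComma, which recovers $W$). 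Since $\binom{d_0^j}{d_1^j}$ is a cofibration for $j=1,2$ and $X$ is cofibrant, each stage is a pushout along a cofibration and hence yields a cofibration by axiom {\bf M3}; finally, $\binom{i_0}{i_2}: X \amalg X \mr{} X \amalg X \amalg X$, which picks out the first and third summands, is itself a cofibration (being a pushout of $0 \mr{} X$), so composition gives the desired cofibrancy of $\binom{b^1 \ast d_0^1}{b^2 \ast d_1^2}$.

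Once $C$ is established as a $w$-cylinder, Lemma~\ref{lema:fittingparacompvert} directly delivers the $w$-homotopy $H = (C, h, \eta, \eps)$ with $[H] = [H^2, H^1]$, where $\eta$ and $\eps$ are the specified pastings involving $\eta^1, \eps^2, \gamma^1, \gamma^2$. The final assertion $\widehat{FH} = \widehat{FH^2} \circ \widehat{FH^1}$ for any 2-functor $F$ sending weak equivalences to quasiequivalences is then immediate from the definition of the equivalence relation on finite composable sequences of $\sigma$-homotopies recalled in~\ref{sin:varios2}.
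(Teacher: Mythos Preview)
Your overall architecture matches the paper's: take $W$ to be the coComma of $d_1^1$ and $d_0^2$, induce $s,h,\nu^i,\gamma^i$ from its universal property, check the hypotheses of Lemma~\ref{lema:fittingparacompvert}, and then argue separately that the resulting cylinder is a $w$-cylinder. The first two paragraphs are essentially the paper's proof.

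The gap is in your cofibrancy argument for $\binom{b^1 d_0^1}{b^2 d_1^2}$. You propose to exhibit $W$ as an iterated \emph{Pushout}: first push out $\binom{d_0^1}{d_1^1}$ to get $W^1\amalg X$, then push out $\binom{d_0^2}{d_1^2}$ along the map $X\amalg X\to W^1\amalg X$ hitting $d_1^1$ and the last summand, claiming this recovers $W$. But a Pushout has an \emph{invertible} structural 2-cell, whereas the coComma $W$ carries the not-necessarily-invertible $\delta$. Computing the universal property shows that your second step produces the bicategorical Pushout of $d_1^1$ and $d_0^2$, not their coComma; these agree only when every 2-cell of $\C$ is invertible (precisely the situation in which the paper notes {\bf MM0}--{\bf MM4} are automatic). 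So ``each stage is a pushout along a cofibration and hence yields a cofibration by axiom {\bf M3}'' does not go through: the second stage is not a Pushout, and {\bf M3} alone is insufficient.

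The paper's argument has the same shape as yours but handles this correctly: it factors $\binom{d_0}{d_1}$ as $X\amalg X \xr{d_0^1\amalg id_X} W^1\amalg X \xr{\binom{b^1}{b^2 d_1^2}} W$, identifies the first arrow as a Pushout of the cofibration $d_0^1$ (so {\bf M3} applies) and the second as the top of a \emph{coComma} square over the cofibration $\binom{d_0^2}{d_1^2}$, invoking {\bf MM3} for that step. Replacing your second ``pushout'' by a coComma and your appeal to {\bf M3} by {\bf MM3} repairs the argument.
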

\begin{proof}              
	We construct the diagram \eqref{diagram:fittingparacompvert} in Lemma \ref{lema:fittingparacompvert} as follows: Let $(W,b^1,b^2,\delta)$ be the lax-biPushout of $d^1_1$ and $d^2_0$. Then $s$, $\nu^1$ and $\nu^2$ are induced by the 2-cell 
	\mbox{$s^1 \ast d^1_1 \Xr{\alpha^1_1} id_X \Xr{(\alpha^2_0)^{-1}} s^2 \ast d^2_0$,} and $h$, $\gamma^1$ and $\gamma^2$ are induced by the 2-cell \mbox{$h^1 \ast d_1^1 \Mr{\eps^1} f_2 \Mr{\eta^2} h^2 \ast d_0^2$.} By construction all the hypothesis of the Lemma are satisfied (the arrow $s$ is a weak equivalence since $b^1$ is so by axiom {\bf MM4} and Lemma \ref{A cofibrant da d y c cofibraciones triviales}). 
Thus, we have a $w$-homotopy $H$ such that $[H] = [H^1,H^2]$. Note that, by the construction of $H$, in order to conclude that $H$ is a $q$-homotopy when $H^1$ and $H^2$ are so, 
 it only remains to prove that $\binom{d_0}{d_1}$ is a cofibration. We consider the diagram

$$\xymatrix{
X \amalg X \ar@<2ex>@/^4ex/[rrrr]^{\binom{d_0}{d_1}} \ar[rr]^{d_0^1 \amalg id_X} \ar@{}[rd]|{\Downarrow} && W^1 \amalg X \ar[rr]^{\binom{b^1}{d_1}} & \ar@{}[rd]|{\Downarrow} & W \\
X \ar[u]_{i_0} \ar@{^{(}->}[r]_{d_0^1}^\sim & W^1 \ar[ru]^{i_0} && X \amalg X \ar[lu]_{d_1^1 \amalg id_X} \ar@{^{(}->}[r]_{\binom{d_0^2}{d_1^2}}& W^2 \ar[u]_{b^2}
}$$
	
\noindent in which the upper triangle commutes (up to isomorphism) by the definition $d_0 = b^1 * d_0^1$, the left square is a biPushout and the right square is a lax-biPushout. Using axioms {\bf M3} and {\bf MM3}, it follows that the top horizontal arrows in the squares are cofibrations and thus so is $\binom{d_0}{d_1}$.
\end{proof}

\begin{remark}
A different {proof of the previous proposition, under the extra assumption that $Y$ is fibrant but without using axiom {\bf MM3}, is also possible as follows. 
Proceed as above to construct the $w$-homotopy $H$, and then use Proposition~\ref{2-cellsWofc} below.} However, we preferred to give this proof because {we consider the} vertical composition of \mbox{$q$-homotopies} {to be} independent of the results which lead to Proposition \ref{2-cellsWofc} (which assume Y fibrant).
\end{remark}

\subsection{Relating w-homotopies and q-homotopies} \label{sub:relating}

\begin{sinnadastandard} \label{sin:strategy}
In this subsection we will show that, for arrows with fibrant codomain, there is a fibrant $q$-homotopy in the same class of any arbitrary $w$-homotopy.
Our strategy for doing this will consist on finding for any $w$-cylinder $C$ a fibrant $q$-cylinder $C'$ which is {\em linked} to $C$ by a zig-zag of cylinder morphisms, and using that in this case a $w$-homotopy $H$ (with respect to $C$) determines a $q$-homotopy $H'$ (with respect to $C'$) in the same class. 
\end{sinnadastandard}

\begin{definition} \label{defmorphcyl}
A morphism $C^1 \mr{M} C^2$ between $w$-cylinders (with the notation of Definition~\ref{def:whomot}) is given by the data $M = (k,\,\ell,\,\gamma_0, \gamma_1, \mu, \nu)$, where $k, \ell$ are arrows fitting in the following {\em prism} diagram
{
$$
\xymatrix
     {
      & 
      & W^1 \ar[dd]^(.4){s^1}|(.4)\circ
          \ar[drr]^{k} 
     \\      
        X \ar@<10pt>[urr]^{d_0^1}
          \ar@<-3pt>[urr]^{d_1^1}
          \ar@/_1pc/@<3pt>[rrrr]^(.7){d_0^2}
          \ar@/_1pc/@<-10pt>[rrrr]^(.7){d_1^2}
          \ar@{=}[dd] &&&& W^2\ar[dd]^(.6){s^2}|(.6)\circ 
     \\
      &
      & Z^1 \ar[drr]^{\ell}
     \\
        X  \ar[urr]^{x^1} 
           \ar@/_1pc/[rrrr]^{x^2}
           %\ar@{}@<9pt>[rrrr]|{\nu \, \Uparrow} 
      &
      &
      &
      & Z^2
    }
$$
}
\vspace{1ex}

\noindent and $\gamma_0, \gamma_1, \mu$ and $\nu$ are invertible 2-cells filling the faces as indicated in the diagrams below, and satisfying the two underlying equalities:
\begin{equation}  \label{eq:hiplemapara3pasos}
\vcenter
 {
  \xymatrix@C=4pc@R=1pc
     {
      X \ar@<1.5ex>[r]^{d^1_0}
        \ar@<.5ex>[r]_{d^1_1}
        \ar@<-0ex>@/_2ex/[ddr]^>>>>{d^2_0}
        \ar@<-1ex>@/_2ex/[ddr]_>>>>{d^2_1}
        \ar@<2ex>@/^3.5ex/[rr]^{x^1}
        \ar@<3.5ex>@{}[rr]|{\alpha_0^1 \Uparrow 
                                      \, \alpha_1^1 \, \Uparrow}
     & 
      W^1 \ar[dd]^{k} 
          \ar[r]|{\circ}^{s^1} 
          \ar@{}[ddr]|{\mu \, \Uparrow} 
     & 
      Z^1 \ar[dd]^{\ell} 
    \\
      \ar@{}[r]^{\gamma_0 \Uparrow \, \gamma_1 \, \Uparrow}
     & 
    \\
    & 
      W^2 \ar[r]|{\circ}^{s^2} & Z^2
     }
 } 
\quad = \quad 
\vcenter
 {
  \xymatrix@C=4pc@R=1pc
     {
      X \ar@{}@<3ex>[rr]|{\nu \, \Uparrow}
        \ar@/^3ex/[rrdd]^{x^2} 
        \ar@<-0ex>@/_2ex/[ddr]^{d^2_0}
        \ar@<-1ex>@/_2ex/[ddr]_{d^2_1} 
        \ar@<2ex>@/^3.5ex/[rr]^{x^1}
        \ar@{}[rrdd]|{\alpha_0^2 \Uparrow \, \alpha_1^2 \, \Uparrow}
    & 
    & 
     Z^1 \ar[dd]^{\ell} 
   \\
    & 
   \\
    & 
     W^2 \ar[r]|{\circ}^{s^2} & Z^2
    }
 } 
\end{equation}
{
That is:

 $s^2*d_0^2 \Xr{\gamma_0} s^2*k*d_0^1 \Xr{\mu * d_0^1} 
        \ell* s^1 *d_0^1 \Xr{\ell*\alpha_0^1} \ell*x^1$
$\;\;=\;\;$
 $s^2*d_0^2 \Xr{\alpha_0^2} x^2 \Xr{\nu} \ell*x^1$. 
 
\vspace{1ex}

 $s^2*d_1^2 \Xr{\gamma_1} s^2*k*d_1^1 \Xr{\mu * d_1^1} 
        \ell* s^1 *d_1^1 \Xr{\ell*\alpha_1^1} \ell*x^1$
$\;\;=\;\;$ $s^2*d_1^2 \Xr{\alpha_1^2} x^2 \Xr{\nu} \ell*x^1$.
}
\end{definition}

{
\begin{remark} \label{rem:alphasencorresp} 
Clearly in Definition \ref{defmorphcyl} the 2-cells $\alpha_0^2$ and $\alpha_1^2$ can be obtained from the 2-cells  
$\alpha_0^1$ and $\alpha_1^1$ respectively, in such a way that the conditions in the definition hold:

\begin{enumerate}

\item (transfer cylinder forward) Given all the data $C^1 \mr{M} C^2$ in Definition \ref{defmorphcyl}, but with $C^2$ lacking the invertible 2-cells $\alpha^2_0$ and $\alpha^2_1$, these 
can be uniquely defined in order to complete a cylinder $C^2$ in such a way that $M$ becomes a morphism of cylinders.

\item If $\ell$ is a quasiequivalence, the same happens the other way around:

(transfer cylinder backwards) Given all the data $C^1 \mr{M} C^2$ in Definition \ref{defmorphcyl}, but with $C^1$ lacking the invertible 2-cells $\alpha^1_0$ and $\alpha^1_1$, these 
can be uniquely defined in order to complete a cylinder $C^1$ in such a way that $M$ becomes a morphism of cylinders.
\end{enumerate}
\end{remark}
}
{
The following is a bicategorical version of the \emph{germ} relation between homotopies considered in 
\cite[Appendix A]{DDS.loc_via_homot}  and \cite{JaG}.
}

{
\begin{definition} \label{defgermrelation} 
Let $f \mrhpy{H^1} g$, $f \mrhpy{H^2} g$ be $w$-homotopies (with the notation of Definition~\ref {def:whomot}) with $w$-cylinders $C^1$, $C^2$ respectively. We say that $H^1$ is \emph{germ related} to $H^2$,  denoted $H^1 \germ > H^2$, if there exist a morphism 
$C^1 \mr{M} C^2$ as in Definition~\ref{defmorphcyl} and 
an invertible 2-cell $h^2 \ast k \Mr{\rho} h^1$ such that 
\begin{enumerate}
\item \label{(2)} $\eta^1$ equals the composition 
$f \Xr{\eta^2} h^2 \ast d^2_0 \Xr{h^2 \ast \gamma_0} h^2 \ast k \ast d^1_0 \Xr{\rho \ast d^1_0} h^1 \ast d^1_0$,
\item $\eps^1$ \label{(3)} equals the composition 
$h^1 \ast d^1_1 \Xr{\rho^{-1} \ast d^1_1} h^2 \ast k \ast d^1_1 \Xr{h^2 \ast \gamma_1^{-1}} h^2 \ast d^2_1 \Xr{\eps^2} g$. 
\end{enumerate}
\end{definition}
}
{
\begin{remark} \label{rem:etaepsencorresp}
Clearly in Definition \ref{defgermrelation} the 2-cells $\eta^1$ and $\eps^1$ can be obtained from the 2-cells  $\eta^2$ and $\eps^2$ respectively, in such a way that the conditions in the definition hold.  Also, the same happens the other way around. Thus:
\begin{enumerate}
\item (transfer homotopy forward) Given all the data in Definition \ref{defgermrelation} but with $H^2$ lacking the 2-cells $\eta^2$ and $\eps^2$, these 
can be uniquely defined in order to complete a $w$-homotopy $H^2$ in such a way that $H^1 \germ > H^2$.

\item (transfer homotopy backwards) Given all the data in Definition \ref{defgermrelation} but with $H^1$ lacking the 2-cells $\eta^1$ and $\eps^1$, these 
can be uniquely defined in order to complete a $w$-homotopy $H^1$ in such a way that $H^1 \germ > H^2$.
\end{enumerate}
\end{remark}
}
{
\begin{lemma} \label{lema:para3pasos}
With the notation of Definition \ref{defgermrelation}, if $H^1 \germ > H^2$, then 
$[H^1] = [H^2]$. Thus, the same holds for the generated equivalence relation $\germ$.  
\end{lemma}
} 
\begin{proof}
Recalling Definition \ref{def:notacionFyrelationHo}, let $(\C,\cc{W}) \mr{F} (\cc{D},q\Theta)$, and let $Fd^i_0 \Xr{\widehat{FC^i}} Fd^i_1$ be the 2-cell such that $Fs^i \ast \widehat{FC^i} = F{\alpha^i}$, $i=1,2$. From Definition \ref{defmorphcyl}, it follows that ${\alpha^2}$ equals the composition
\begin{equation} \label{eq:alpha2equals}
s^2 \ast d^2_0 \Xr{s^2 \ast \gamma_0} s^2 \ast k \ast d^1_0 \Xr{\mu \ast d^1_0} \ell \ast s^1 \ast d^1_0 \Xr{\ell \ast  {\alpha^1}} \ell \ast s^1 \ast d^1_1 \Xr{\mu^{-1} \ast d^1_1} s^2 \ast k \ast d^1_1 \Xr{s^2 \ast \gamma_1^{-1}} s^2 \ast d^2_1.
\end{equation}
We start by showing that the 2-cell $\widehat{FC^2}$ equals the composition 
$$Fd^2_0 \Xr{F\gamma_0} Fk \ast Fd^1_0 \Xr{Fk \ast \widehat{FC^1}} Fk \ast Fd^1_1 \Xr{F\gamma_1^{-1}} Fd^2_1.$$
By applying $Fs^2 \ast (-)$ to this composition, and comparing it with the value of the 2-functor F at the composition in \eqref{eq:alpha2equals} above, it follows that it suffices to show that $F(s^2 \ast k) \ast \widehat{FC^1}$ equals the composition 
$$F(s^2 \ast k \ast d^1_0) \Xr{F(\mu \ast d^1_0)} F(\ell \ast s^1 \ast d^1_0) \Xr{F(\ell \ast {\alpha^1})} F(\ell \ast s^1 \ast d^1_1) \Xr{F(\mu^{-1} \ast d^1_1)} F(s^2 \ast k \ast d^1_1).$$ This followws from the interchange law applied to the configuration
$\xymatrix@C=8ex{FX \ar@<1.6ex>[r]^{Fd^1_0} 
             \ar@{}@<-1.3ex>[r]^{\!\! {\widehat{FC^1}} \, \!\Downarrow}
             \ar@<-1.1ex>[r]_{Fd^1_1} & Fd^1_1      
\ar@<1.6ex>[r]^{F(\ell \ast s^1)} 
             \ar@{}@<-1.3ex>[r]^{\!\! {F\mu^{-1}} \, \!\Downarrow}
             \ar@<-1.1ex>[r]_{F(s^2 \ast k)} & FZ}$.              
Therefore, the 2-cell $\widehat{F H^2}$ is the composition
$$Ff \Mr{F\eta^2} Fh^2 \ast Fd^2_0 \Xr{\!\! Fh^2 \ast F\gamma_0 \!\!} Fh^2 \ast Fk \ast Fd^1_0 \Xr{\!\! F(h^2 \ast k) \ast \widehat{FC^1} \!\!} Fh^2 \ast Fk \ast Fd^1_1 \Xr{\!\! Fh^2 \ast F\gamma_1^{-1} \!\!} Fh^2 \ast Fd^2_1 \Mr{F \eps^2} Fg.$$
By looking at the {expression} of the 2-cells $\eta^1$ and $\eps^1$ in the hypothesis of the lemma, it follows that in order to show $\widehat{F H^2} = \widehat{F H^1}$ it suffices to show that $F(h^2 \ast k) \ast \widehat{FC^1}$ equals the composition 

\mbox{$Fh^2 \ast Fk \ast Fd^1_0 \Xr{F\rho \ast Fd^1_0} Fh^1 \ast Fd^1_0 \Xr{Fh^1 \ast \widehat{FC^1}} Fh^1 \ast Fd^1_1 \Xr{F\rho^{-1} \ast Fd^1_1} Fh^2 \ast Fk \ast Fd^1_1$,} 

\vspace{1ex}

\noindent which follows from the interchange law applied to the configuration
$\xymatrix@C=8ex{FX \ar@<1.6ex>[r]^{Fd^1_0} 
             \ar@{}@<-1.3ex>[r]^{\!\! {\widehat{FC^1}} \, \!\Downarrow}
             \ar@<-1.1ex>[r]_{Fd^1_1} & Fd^1_1      
\ar@<1.6ex>[r]^{Fh^1} 
             \ar@{}@<-1.3ex>[r]^{\!\! {F\rho^{-1}} \, \!\Downarrow}
             \ar@<-1.1ex>[r]_{F(h^2 \ast k)} & FY}$. 
\end{proof}

If $H^1$, $H^2$ in the previous lemma have the same $w$-cylinder $C = C^1 = C^2$, we may take $M$ to be the evident identity morphism and we have the following

\begin{corollary}%[of Lemma \ref{lema:para3pasos}] 
\label{coro:lemapara3pasosconC1igualC2}
Let $f \mrhpy{H^1} g$, $f \mrhpy{H^2} g$ be $w$-homotopies (with the notation as in Definition 
\ref {def:whomot}) with the same $w$-cylinder $C$.
Assume there exists an invertible 2-cell \mbox{$h^2 \Mr{\rho} h^1$} such that 
\begin{enumerate}
\item \label{(2bis)} $\eta^1$ equals the composition 
$f \Xr{\eta^2} h^2 \ast d^2 \Xr{\rho \ast d^1} h^1 \ast d^1$.
\item $\eps^1$ \label{(3bis)} equals the composition 
$h^1 \ast c^1 \Xr{\rho^{-1} \ast c^1} h^2 \ast c^2 \Xr{\eps^2} g$. 
\end{enumerate}
Then
 $[H^1] = [H^2]$. \qed
\end{corollary}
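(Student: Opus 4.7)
The plan is to deduce this corollary as the special case of Lemma~\ref{lema:para3pasos} in which the morphism of $\sigma$-cylinders $M: C^1 \to C^2$ is the identity. Since the hint preceding the statement already signals this reduction, the only real content is verifying that, once $M$ is taken to be the identity, the two numbered conditions of Lemma~\ref{lema:para3pasos} degenerate precisely into the two numbered conditions of the corollary.

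Concretely, I would set $k = id_W$, $\ell = id_Z$, and take $\gamma_0, \gamma_1, \mu, \nu$ to all be identity 2-cells. With these choices the defining equation \eqref{eq:hiplemapara3pasos} of a cylinder morphism reduces to the tautology $\alpha_i^1 = \alpha_i^2$ (which holds since $C^1 = C^2$), so $M$ is a bona fide morphism of $\sigma$-cylinders from $C$ to itself. The hypothesized 2-cell $h^2 \Mr{\rho} h^1$ of the corollary then plays the role of the 2-cell $h^2 * k \Mr{\rho} h^1$ demanded by the lemma, since $h^2 * k = h^2$.

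With these identifications, condition \eqref{(2)} of Lemma~\ref{lema:para3pasos} asks that $\eta^1$ equal $f \Mr{\eta^2} h^2 * d^2_0 \Xr{h^2 * \gamma_0} h^2 * k * d^1_0 \Xr{\rho * d^1_0} h^1 * d^1_0$; since $\gamma_0$ and $k$ are identities, this composite collapses to $f \Mr{\eta^2} h^2 * d^2 \Xr{\rho * d^1} h^1 * d^1$, which is exactly hypothesis~\eqref{(2bis)} of the corollary. Condition \eqref{(3)} collapses similarly to hypothesis \eqref{(3bis)}. The conclusion $[H^1] = [H^2]$ is then immediate from Lemma~\ref{lema:para3pasos}.

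There is no real obstacle: the whole proof is bookkeeping, and the substantive work was already carried out in Lemma~\ref{lema:para3pasos}. If I wished to be even more economical, I could simply state ``Apply Lemma~\ref{lema:para3pasos} to the identity morphism of $\sigma$-cylinders $C \to C$,'' but writing out the degeneration of the two hypotheses makes the argument self-contained for the reader.
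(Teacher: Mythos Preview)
Your proposal is correct and follows exactly the paper's own approach: the paper simply remarks that one may take $M$ to be the evident identity morphism of $\sigma$-cylinders and then states the corollary with a \qed. Your write-up merely spells out in detail the degeneration of the hypotheses of Lemma~\ref{lema:para3pasos} under $k=id_W$, $\ell=id_Z$, $\gamma_i=\mu=\nu=id$, which the paper leaves implicit.
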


We develop now the strategy envisaged in \ref{sin:strategy}. 
The following lemmas lead to Proposition \ref{2-cellsWofc}, which states that for arrows with fibrant codomain, each $w$-homotopy has an associated fibrant $q$-homotopy in the same class.

\begin{lemma} \label{lema0}
	Let $\xymatrix{X \ar@<1ex>[r]^f \ar@<-1ex>[r]^g & Y}\in\C$, with $Y$ a fibrant object. 
	Let $\xymatrix{f \ar@2{~>}@<0.25ex>[r]^{H} & g}$ be a $w$-homotopy as in Definition \ref{def:whomot}. Then, there is a 
	$w$-homotopy $\xymatrix{f \ar@2{~>}@<0.25ex>[r]^{H'} & g}$ with $W'$ and $Z'$ fibrant objects, such that 
	{$H \germ > H'$.}  
\end{lemma}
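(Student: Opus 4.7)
The plan is to perform a fibrant replacement of the cylinder $C$ in two stages (first on $Z$, then on $W$), lift $h$ along the corresponding trivial cofibration using that $Y$ is fibrant, and then invoke Corollary~\ref{coro:paralema3pasos} to conclude that the class is preserved.

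First I would use axiom {\bf M2} to factorize $Z\to *$ as $\xymatrix{Z\ar@{^{(}->}[r]^{\sim}_{j}&Z'\ar@{->>}[r]&*}$, so that $Z'$ is fibrant. Next, I factorize the composite $j\ast s\colon W\to Z'$ (still a weak equivalence, since $j$ and $s$ are) as $\xymatrix{W\ar@{^{(}->}[r]^{\sim}_{k}&W'\ar@{->>}[r]^{s'}&Z'}$, with $k$ a trivial cofibration and $s'$ a fibration. By axiom {\bf M5}, $s'$ is a weak equivalence, and by {\bf M3} the composite $W'\xrightarrow{s'}Z'\to *$ is a fibration, so $W'$ is fibrant. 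The invertible 2-cell $\mu\colon s'\ast k\Rightarrow j\ast s$ coming from the factorization is part of this data.

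I then build the new $\sigma$-cylinder $C'=(W',Z',d'_0,d'_1,x',s',\alpha'_0,\alpha'_1)$ with $d'_0=k\ast d_0$, $d'_1=k\ast d_1$, $x'=j\ast x$, together with the cylinder morphism $M=(k,j,id,id,\mu,id)\colon C\to C'$; the associated 2-cells $\alpha'_0,\alpha'_1$ are then determined via Remark~\ref{rem:alphasencorresp}(1) (essentially by pasting $j\ast\alpha_i$ with $\mu\ast d_i$). Since $Y$ is fibrant and $k$ is a trivial cofibration, axiom {\bf M1} applied to the square with top $h\colon W\to Y$ and left $k\colon W\to W'$ produces an arrow $h'\colon W'\to Y$ together with an invertible 2-cell $\tau\colon h'\ast k\Rightarrow h$.

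At this point I have all the hypotheses of Corollary~\ref{coro:paralema3pasos}(1): the original $\sigma$-homotopy $H^1=H$, the cylinder morphism $M$, and the invertible 2-cell $\rho:=\tau\colon h'\ast k\Rightarrow h$. The corollary produces the remaining data $\eta',\eps'$ (uniquely determined by condition (1) and (2) of Lemma~\ref{lema:para3pasos}), yielding a $\sigma$-homotopy $H'=(C',h',\eta',\eps')$ with $W'$ and $Z'$ fibrant and $[H']=[H]$, as required. The one place where care is needed is the compatibility condition in Definition~\ref{defmorphcyl} for $M$; with $\gamma_i=id$ and $\nu=id$ this reduces to the defining equation of $\alpha'_i$, so it holds by construction. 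Thus the main work is the two-step factorization plus the single lifting step, and the rest is handled by the machinery already developed.
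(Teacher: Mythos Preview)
Your proof is correct and follows essentially the same approach as the paper: replace $Z$ by a fibrant $Z'$, enlarge $W$ to a fibrant $W'$ via a trivial cofibration $k$, and then apply Corollary~\ref{coro:paralema3pasos}(1). The only difference is in how $W'$, $s'$, and $h'$ are obtained: the paper factorizes $h$ through $W'$ (getting $h'$ as a fibration) and then lifts $\ell\ast s$ along $k$ to produce $s'$, whereas you factorize $j\ast s$ through $W'$ (getting $s'$ as a fibration) and lift $h$ along $k$ to produce $h'$. Both are valid; your variant has the incidental advantage that $s'$ is a fibration, so your $H'$ is already a fibrant $\sigma$-homotopy and Lemma~\ref{lema:3pasos} becomes superfluous---indeed the paper remarks just after that lemma that the two steps can be unified in precisely this way.
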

\begin{proof}
 First we factorize using axiom {\bf M2}:
	$$\vcenter{\xymatrix{Z \ar[rr] \ar@{^{(}->}[rd]_\ell
			\ar@<-1ex>@{}[rd]^[@]{\sim}  && 1 \\
			& Z' \ar@{->>}[ru]
			\ar@{}[u]|>>>>{\cong}}}, \qquad \qquad
\vcenter{\xymatrix{W \ar[rr]^{h} \ar@{^{(}->}[rd]_k
			\ar@<-1ex>@{}[rd]^[@]{\sim}  && Y \\
			& W' \ar@{->>}[ru]_{h'}
			\ar@{}[u]|>>>>{\rho \ {\Uparrow} \ \cong}}}$$
	Then we use axioms {\bf M1} and {\bf M5} in order to have 
	$$\vcenter{\xymatrix@C=1.2pc@R=1.2pc{W \ar[rr]^{\ell * s}_{\sim} \ar@{^{(}->}[dd]_k
			\ar@<.3ex>@{}[dd]^[@]{\sim} 
			%\ar[dd]_k^[@]{\sim}
			%\ar@<1ex>@{}[dd]^[@]{\sim}
			&& Z' \ar@{->>}[dd] \\ 
			\ar@{}[ru]|{\mu \, \Uparrow \, \cong} & \\
			W' \ar@{.>}[rruu]_{s'}
			\ar@<-1ex>@{}[rruu]^[@]{\sim}
			\ar[rr] && 1}}$$  
{ and we set $x' = \ell * x$. 
In this way we have a cylinder $C'$ except for the 2-cells 
$\alpha'_0$, $\alpha'_1$, and all the data of a morphism 
\mbox{$M = (k,\,\ell,\, id,\,id,\, \mu,\, id): C \mr{} C'$} as follows (compare with diagram (\ref{eq:hiplemapara3pasos})):
} 
$$\vcenter{\xymatrix@C=4pc@R=1pc{
			X \ar@<1.5ex>[r]^{d _0}
			\ar@<.5ex>[r]_{d _1}
			\ar@<-0ex>@/_2ex/[ddr]^>>>>{d'_0}
			\ar@<-1ex>@/_2ex/[ddr]_>>>>{d'_1}
			\ar@<2ex>@/^3.5ex/[rr]^{x }
			\ar@<3.5ex>@{}[rr]|{\alpha_0  \Uparrow \, \alpha_1  \, \Uparrow}
			& W  \ar[dd]^{k} \ar[r]_\sim^{s } \ar@{}[ddr]|{\mu \, \Uparrow} & Z  \ar[dd]^{\ell} \\
			\ar@{}[r]^{id \Uparrow \, id \, \Uparrow}
			& \\
			& W' \ar[r]_\sim^{s'} & Z'}} 
	\quad \; = \;\quad 
	\vcenter{\xymatrix@C=4pc@R=1pc{
			X \ar@{}@<3ex>[rr]|{id \, \Uparrow}
			\ar@/^3ex/[rrdd]^{x'} 
			\ar@<-0ex>@/_2ex/[ddr]^{d'_0}
			\ar@<-1ex>@/_2ex/[ddr]_{d'_1} 
			\ar@<2ex>@/^3.5ex/[rr]^{x }
			%\ar@{}[rrdd]|{\alpha'_0 \Uparrow \, \alpha'_1 \, \Uparrow}
			& & Z  \ar[dd]^{\ell} \\
			& \\
			& W' \ar[r]_\sim^{s'} & Z'}}$$

\vspace{1ex}

{We can then {\em transfer forward} $C$ and $H$ to get the desired $C'$ and $H'$:
first, by item 1 in \mbox{Remark \ref{rem:alphasencorresp}}, we complete $C'$ with 2-cells $\alpha'_0$, $\alpha'_1$.			
In turn, since we have an invertible 2-cell $h'*k \Mr{\rho} h$, the proof finishes by item 1 in \mbox{Remark \ref{rem:etaepsencorresp}.}
}			
\end{proof}

\begin{lemma} \label{lema:3pasos}
	Let $\xymatrix{f \ar@2{~>}@<0.25ex>[r]^{H} & g}$ be a $w$-homotopy as in Definition \ref{def:whomot}, with $W$ a fibrant object. Then, there is a 
	fibrant $w$-homotopy $\xymatrix{f \ar@2{~>}@<0.25ex>[r]^{H'} & g}$ such that {$H \germ > H'$.}
\end{lemma}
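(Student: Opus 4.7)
The plan is to use axiom \textbf{M2} to replace the weak equivalence $s: W \to Z$ in the $\sigma$-cylinder by a trivial fibration, and to exploit the fibrancy of $W$ to compensate when modifying the arrow $h$. First I would factor $s$ via \textbf{M2} as $s \cong s' \ast k$, where $k: W \to W'$ is a trivial cofibration and $s': W' \to Z$ is a fibration; by the 3-for-2 axiom \textbf{M5}, $s'$ is also a weak equivalence, hence a trivial fibration. Since $W$ is fibrant, applying \textbf{M1} to the square with $k$ on the left and $W \to \ast$ on the right yields a retraction $r: W' \to W$ together with an invertible 2-cell $\sigma: r \ast k \Mr{\cong} id_W$.

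Next I would construct the fibrant $\sigma$-cylinder
$$C' = (W', Z, k \ast d_0, k \ast d_1, x, s', \alpha'_0, \alpha'_1),$$
where $\alpha'_i$ is the composition $s' \ast k \ast d_i \Xr{\cong} s \ast d_i \Xr{\alpha_i} x$ obtained via the factorization isomorphism $\mu: s \Xr{\cong} s' \ast k$. The tuple $M = (k, id_Z, id, id, \mu, id)$ is then a morphism $C \to C'$ of $\sigma$-cylinders in the sense of Definition \ref{defmorphcyl}: the compatibility equation holds by construction of the $\alpha'_i$, since $\gamma_0, \gamma_1$, and $\nu$ are all identities.

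Finally, I would set $h' := h \ast r: W' \to Y$, giving an invertible 2-cell $\rho: h' \ast k = h \ast r \ast k \Xr{h \ast \sigma} h$. With $H^1 = H$, $C^2 = C'$, $h^2 = h'$ and $M, \rho$ as just defined, all the hypotheses of Corollary \ref{coro:paralema3pasos} item (1) are met, so the missing data $\eta'$ and $\epsilon'$ can be chosen to produce a $\sigma$-homotopy $H' = (C', h', \eta', \epsilon')$ with $[H'] = [H]$. Since $s'$ is a fibration, $H'$ is fibrant, as required.

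The main point to anticipate is the verification of the compatibility equation in Definition \ref{defmorphcyl} for $M$; however, because the only non-identity component is $\mu$, and the 2-cells $\alpha'_i$ of $C'$ were built precisely so that this equation reduces to their definition, the check is essentially automatic. In particular, nothing in this argument requires the codomain $Y$ to be fibrant: only the fibrancy of $W$ (to obtain the retraction $r$) and the existence of the factorization are used.
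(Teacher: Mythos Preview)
Your proof is correct and follows essentially the same approach as the paper: factor $s$ via \textbf{M2} as a trivial cofibration $k$ followed by a trivial fibration $s'$, use the fibrancy of $W$ and \textbf{M1} to obtain a retraction of $k$, set $h' = h \ast r$, and invoke Corollary~\ref{coro:paralema3pasos}(1) with the morphism of cylinders $(k, id_Z, id, id, \mu, id)$. The only cosmetic difference is that the paper leaves $\alpha'_0, \alpha'_1$ to be produced by the corollary rather than writing them down explicitly first, and the direction of $\mu$ in Definition~\ref{defmorphcyl} is $s' \ast k \Rightarrow \ell \ast s$ rather than the reverse you wrote---but since $\mu$ is invertible this is immaterial.
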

\begin{proof}
The strategy of this proof is similar to the one of the previous lemma.
	First we factorize 
	$\vcenter{\xymatrix{W \ar[rr]_\sim^s \ar@{^{(}->}[rd]_k
			\ar@<-1ex>@{}[rd]^[@]{\sim}  && Z \\
			& W' \ar@{->>}[ru]_{s'}
			\ar@<-1ex>@{}[ru]^[@]\sim
			\ar@{}[u]|>>>>{\mu \ \Uparrow \ \cong}}}$ using axioms {\bf M2} and {\bf M5}, 
{and we set $x' = x$.

In this way we have a cylinder $C'$ except for the 2-cells 
$\alpha'_0$, $\alpha'_1$, and all the data of a morphism 
\mbox{$M = (k,\,id_Z,\, id,\,id,\, \mu,\, id): C \mr{} C'$} as follows:
}
	$$\vcenter{\xymatrix@C=4pc@R=1pc{
			X \ar@<1.5ex>[r]^{d _0}
			\ar@<.5ex>[r]_{d _1}
			\ar@<-0ex>@/_2ex/[ddr]^>>>>{d'_0}
			\ar@<-1ex>@/_2ex/[ddr]_>>>>{d'_1}
			\ar@<2ex>@/^3.5ex/[rr]^{x }
			\ar@<3.5ex>@{}[rr]|{\alpha_0  \Uparrow \, \alpha_1  \, \Uparrow}
			& W  \ar[dd]^{k} \ar[r]_\sim^{s} \ar@{}[ddr]|{\mu \, \Uparrow} & Z  \ar[dd]^{id_Z} \\
			\ar@{}[r]^{id \Uparrow \, id \, \Uparrow}
			& \\
			& W' \ar[r]_\sim^{s'} & Z}} 
	\quad = \quad 
	\vcenter{\xymatrix@C=4pc@R=1pc{
			X \ar@{}@<3ex>[rr]|{id \, \Uparrow}
			\ar@/^3ex/[rrdd]^{x'} 
			\ar@<-0ex>@/_2ex/[ddr]^{d'_0}
			\ar@<-1ex>@/_2ex/[ddr]_{d'_1} 
			\ar@<2ex>@/^3.5ex/[rr]^{x }
			& & Z  \ar[dd]^{id_Z} \\
			& \\
			& W' \ar[r]_\sim^{s'} & Z}} $$ 
{ Then, by item 1 in \mbox{Remark \ref{rem:alphasencorresp}} we complete $C'$ with 2-cells $\alpha'_0$, $\alpha'_1$.
}

Now we use axiom {\bf M1} in order to have 
	$\vcenter{\xymatrix@C=1.2pc@R=1.2pc{W \ar[rr]^{id_W} 
			\ar@{^{(}->}[dd]_k
			\ar@<.3ex>@{}[dd]^[@]{\sim}		
			%\ar[dd]_k 
			&& W \ar@{->>}[dd] \\ 
			\ar@{}[ru]|{\eps \, \Downarrow \, \cong} & \\
			W' \ar@{.>}[rruu]_{m} \ar[rr] && 1}}$,
and set $h' = h * m$, \mbox{$\rho = h * \eps: h' * k \Xr{} h$.} 
{The proof finishes then by item 1 in \mbox{Remark \ref{rem:etaepsencorresp}.}}
\end{proof}

\begin{remark}
	The previous two lemmas admit a unified proof using {Remarks \ref{rem:alphasencorresp} and \ref{rem:etaepsencorresp}} only once. However we consider that the proof in two separate steps is easier to follow.
\end{remark}

\begin{lemma} \label{lem:3pasosbis}
	Let $\xymatrix{f \ar@2{~>}@<0.25ex>[r]^{H} & g}$ be a fibrant $w$-homotopy, then
	there is a fibrant \mbox{$q$-homotopy} $\xymatrix{f \ar@2{~>}@<0.25ex>[r]^{H'} & g}$ such that {$H' \germ > H$.}
\end{lemma}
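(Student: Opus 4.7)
The plan is to construct a fibrant $w$-cylinder $C'$ together with a cylinder morphism $M\colon C' \to C$ satisfying the hypotheses of Lemma~\ref{lema:para3pasos}, and then transport $H$ to a fibrant $w$-homotopy $H'$ in the same class. The key idea is to use a Pullback to change the target of the cylinder from $Z$ to $X$, replacing $x$ by $id_X$, while a subsequent factorization arranges for $\binom{d'_0}{d'_1}$ to be a cofibration.

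First, by axiom {\bf M0}, form the Pullback $P = W \times_Z X$ of the trivial fibration $s$ along $x$, with projections $p\colon P \to X$ and $\pi\colon P \to W$ and invertible pullback 2-cell $\sigma\colon s \ast \pi \cong x \ast p$. By axioms {\bf M3} and {\bf M4}, $p$ is a trivial fibration. The 2-cells $\alpha_k\colon s \ast d_k \cong x$ ($k=0,1$) induce, via the universal property, arrows $\tilde{d}_k\colon X \to P$ equipped with invertible 2-cells $\delta_k\colon \pi \ast \tilde{d}_k \cong d_k$ and $\beta_k\colon p \ast \tilde{d}_k \cong id_X$. Next, by axiom {\bf M2}, factor $\binom{\tilde{d}_0}{\tilde{d}_1}\colon X \amalg X \to P$ as a cofibration $j\colon X \amalg X \hookrightarrow W'$ followed by a trivial fibration $q\colon W' \to P$, with 2-cells $\zeta_k\colon q \ast j \ast i_k \cong \tilde{d}_k$. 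Set $d'_k = j \ast i_k$ and $s' = p \ast q\colon W' \to X$. Then $\binom{d'_0}{d'_1} = j$ is a cofibration, $s'$ is a trivial fibration by {\bf M3} and {\bf M5}, and the composites $p \ast \zeta_k$ followed by $\beta_k$ yield invertible 2-cells $\alpha'_k\colon s' \ast d'_k \cong id_X$. Thus $C' = (W', d'_0, d'_1, s', \alpha'_0, \alpha'_1)$ is a fibrant $w$-cylinder.

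Now define a cylinder morphism $M\colon C' \to C$ by taking $k = \pi \ast q\colon W' \to W$, $\ell = x\colon X \to Z$, with $\nu$ trivial (since $\ell \ast id_X \cong x$), $\gamma_k\colon d_k \cong k \ast d'_k$ coming from $\delta_k$ and $\pi \ast \zeta_k$, and $\mu\colon \ell \ast s' \cong s \ast k$ induced from $\sigma \ast q$. Set $h' = h \ast k$, $\rho = id_{h \ast k}$, and let $\eta'$ and $\eps'$ be the compositions prescribed by conditions (1) and (2) of Lemma~\ref{lema:para3pasos}. Then $H' = (C', h', \eta', \eps')$ is a fibrant $w$-homotopy, and once the coherence of $M$ is verified, Lemma~\ref{lema:para3pasos} directly yields $[H'] = [H]$.

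The main obstacle is checking the coherence equation \eqref{eq:hiplemapara3pasos} for $M$. Since every 2-cell involved is built from the pullback data $(\sigma, \delta_k, \beta_k)$ and the factorization 2-cell $\zeta_k$, an elevators computation reduces \eqref{eq:hiplemapara3pasos} to the defining relation $\alpha_k = (x \ast \beta_k) \circ (\sigma \ast \tilde{d}_k) \circ (s \ast \delta_k^{-1})$ for the induced map $\tilde{d}_k$, together with the interchange law applied to $\sigma$ whiskered with $\zeta_k$; the $\zeta_k$'s and their inverses cancel and the equation collapses to $\alpha_k = \alpha_k$. Note that this approach avoids having to assume $\ell = x$ is a quasiequivalence, so we do not need Corollary~\ref{coro:paralema3pasos} here and can work with Lemma~\ref{lema:para3pasos} itself.
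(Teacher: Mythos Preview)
Your argument is correct, and it takes a genuinely different route from the paper's proof. The paper proceeds in two separate steps: first it pulls back $s$ along $s \ast d_1$ (an asymmetric choice that forces the use of the variant cylinder $C_1$ from Remark~\ref{rem:soloimportaalphatilde}) to reduce to the case $Z=X$, $x=id_X$; then, in a second step, it factorizes $\binom{d_0}{d_1}$ and applies Corollary~\ref{coro:paralema3pasos}, item 2, which is available because at that point $\ell = id_X$ is trivially a quasiequivalence. You instead pull back $s$ along $x$ itself---the more symmetric and natural choice---and perform the factorization immediately, building the $w$-cylinder $C'$ in one pass. Because your $\ell = x$ need not be a quasiequivalence, you cannot invoke Corollary~\ref{coro:paralema3pasos}; you compensate by constructing the $\alpha'_k$ explicitly and verifying the coherence equation \eqref{eq:hiplemapara3pasos} by hand, which (as you note) reduces via interchange to the defining relation of the Pullback-induced maps $\tilde{d}_k$. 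The paper's two-step decomposition is more modular and reuses the machinery of Remark~\ref{rem:alphasencorresp} and Corollary~\ref{coro:paralema3pasos}, while your single-step approach is more direct, avoids the auxiliary cylinder $C_1$, and makes transparent exactly which universal properties are doing the work.
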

\begin{proof}
	As for the previous two lemmas, for clarity it is convenient to proceed in two steps that have a similar strategy (we will now transfer $C$ and $H$ backwards instead of forward).
	In step 1, we will show that given $H$ as in the hypothesis, there exists $H'$ with {$H' \germ > H$,} 
%$[H'] = [H]$, 
which is still fibrant and such that $Z' = X$ and $x' = id_X$. In step 2, we will show that given $H$ satisfying these extra conditions, there exists a $q$-homotopy $H'$ (i.e. satisfying {also} the condition that $\binom{d_0}{d_1}$ is a cofibration) that is still fibrant and such that {$H' \germ > H$}.
%$[H'] = [H]$.
	
	\medskip

\noindent 1. 
{
By Remark \ref{rem:soloimportaalphatilde} we can consider 
$C = (W,Z,d_0,d_1,s*d_1,s,{\alpha},s*d_1)$ to be the cylinder of 
$H$, $H = (C,h,\eta,\eps)$}. 
Let $W'$ in the diagram below be the  Pullback of $s$ and $s \ast d_1$ (recall axioms {\bf M3} and {\bf M4} so that $s'$ results a trivial fibration). 
	From the universal property of the coProduct (recall our notation from \ref{sin:limits}), we have a 2-cell 
	$ \binom{{\alpha}}{\gi}:$
	$s \ast \binom{d_0}{d_1} \Mr{} 
	s \ast d_1 \ast \nabla_X $, which induces 
	by the universal property of the Pullback the diagram below
	
	$$\vcenter{\xymatrix@C=4pc@R=2.5pc{
			X\amalg X \ar@<2ex>@/^3.5ex/[rr]^{\nabla_X}
			\ar@<3.5ex>@{}[rr]|{\alpha' \ \Uparrow \ \cong}
			\ar@/_3ex/@{{}{ }{}}[r]|>>>>>>>{\gamma \ \Uparrow \ \cong}
			\ar[r]^{\binom{d'_0}{d'_1}}
			\ar@/_2ex/[rd]_{\binom{d_0}{d_1}} 
			& W' \ar[d]^{k} \ar@{->>}[r]_\sim^{s'} \ar@{}[dr]|{\mu \, \Uparrow} & X \ar[d]^{s \ast d_1} \\
			& W \ar@{->>}[r]_\sim^{s} & Z}}
	\;\;=\;\; 
	\vcenter{\xymatrix@C=4pc@R=2.5pc{
			X\amalg X \ar@<2ex>@/^3.5ex/[rr]^{\nabla_X}
			\ar@/_2ex/[rd]_{\binom{d_0}{d_1}} 
			\ar@<-2ex>@{}[rr]|{\binom{{\alpha}}{\gi} \; \Uparrow}
			&& X \ar[d]^{s \ast d_1} \\
			& W \ar@{->>}[r]_\sim^{s} & Z}}
	$$
	
	The equality above is clearly equivalent to the following two equalities as in \eqref{eq:hiplemapara3pasos}:
	
	$$\vcenter{\xymatrix@C=4pc@R=1pc{
			X \ar@<1.5ex>[r]^{d'_0}
			\ar@<.5ex>[r]_{d'_1}
			\ar@<-0ex>@/_2ex/[ddr]^>>>{d_0}
			\ar@<-1ex>@/_2ex/[ddr]_>>>{d_1}
			\ar@<2ex>@/^3.5ex/[rr]^{id}
			\ar@<3.5ex>@{}[rr]|{\alpha'_0  \Uparrow \, \alpha'_1  \, \Uparrow}
			& W'  \ar[dd]^{k} \ar[r]_\sim^{s'} \ar@{}[ddr]|{\mu \, \Uparrow} & X  \ar[dd]^{s * d_1} \\
			\ar@{}[r]^{\gamma_0 \Uparrow \, \gamma_1 \, \Uparrow}
			& \\
			& W \ar[r]_\sim^{s} & Z}} 
	\quad = \quad 
	\vcenter{\xymatrix@C=4pc@R=1pc{
			X \ar@{}@<3ex>[rr]|  {\gi \, \Uparrow}
			\ar@{}[ddrr]|{{\alpha} \Uparrow \, \gi \Uparrow}
			\ar@/^3ex/[rrdd]^{s * d_1} 
			\ar@<-0ex>@/_2ex/[ddr]^{d_0}
			\ar@<-1ex>@/_2ex/[ddr]_{d_1} 
			\ar@<2ex>@/^3.5ex/[rr]^{{id_X}}
			& & X  \ar[dd]^{s * d_1} \\
			& \\
			& W \ar[r]_\sim^{s} & Z}} $$ 
	
{We have then a cylinder $C'$ and a morphism 
$M = (k,\,s*d_1,\,\gamma_0, \gamma_1, \mu, \gi):C' \mr{} C$,} which allows to {transfer $H$ backwards}: we set $h' = h * k$, 
$\rho = \gi$, and 
we use Remark \ref{rem:etaepsencorresp}, item 2 in order to have $H'$ such that {$H' \germ > H$.}
	\smallskip
	
	\noindent 2. We factorize 
	$\vcenter{\xymatrix{X \amalg X \ar[rr]^{\binom{d_0}{d_1}} \ar@<-1ex>@{^{(}->}[rd]_{\binom{d'_0}{d'_1}} && W \\
			& W' \ar@{->>}[ru]_k
			\ar@<-1ex>@{}[ru]^[@]\sim \ar@{}[u]|>>>>{\gamma \ \Uparrow \ \cong}}}$ using axiom {\bf M2}, and we set $s' = s*k$.

In this way we have a cylinder $C'$ except for the 2-cells 
$\alpha'_0$, $\alpha'_1$, and all the data of a morphism 
$M = (k,\,id_X,\,\gamma_0, \gamma_1, \gi, \gi):C' \mr{} C$ as follows:			 
 
$$\vcenter{\xymatrix@C=4pc@R=1pc{
			X 
			\ar@<1.5ex>[r]^{d'_0}
			\ar@<.5ex>[r]_{d'_1}
			\ar@<-0ex>@/_2ex/[ddr]^>>{d_0}
			\ar@<-1ex>@/_2ex/[ddr]_>>{d_1}
			\ar@<2ex>@/^3.5ex/[rr]^{id_X}
			& W'  \ar[dd]^{k} \ar[r]_\sim^{s'} \ar@{}[ddr]|{\gi \, \Uparrow} & X  \ar[dd]^{id_X} \\
			\ar@{}[r]^{\gamma_0 \Uparrow \, \gamma_1 \, \Uparrow}
			& \\
			& W \ar[r]_\sim^{s} & X}} 
	\quad \, \quad 
	\vcenter{\xymatrix@C=4pc@R=1pc{
			X 
			\ar@{}[rrdd]|{\alpha_0  \Uparrow \, \alpha_1  \, \Uparrow}
			\ar@{}@<3ex>[rr]|{\gi \, \Uparrow}
			\ar@/^3ex/[rrdd]^{id_X} 
			\ar@<-0ex>@/_2ex/[ddr]^{d_0}
			\ar@<-1ex>@/_2ex/[ddr]_{d_1} 
			\ar@<2ex>@/^3.5ex/[rr]^{id_X}
			& & X  \ar[dd]^{id_X} \\
			& \\
			& W \ar[r]_\sim^{s} & X}} $$ 
	
	\noindent 
{We can then {\em transfer backwards} $C$ and $H$ to get the desired $C'$ and $H'$:
first, by item 2 in \mbox{Remark \ref{rem:alphasencorresp}}, we complete $C'$ with 2-cells $\alpha'_0$, $\alpha'_1$.
Then, just like in step 1,  we set $h' = h * k$, 
$\rho = \gi$, and 
we use Remark \ref{rem:etaepsencorresp}, item 2 in order to have $H'$ such that {$H' \germ > H$.}}
\end{proof}
\begin{proposition} \label{2-cellsWofc}
	%\label{coro:preteo2cellsHoc}
	Let $\xymatrix{X \ar@<1ex>[r]^f \ar@<-1ex>[r]^g & Y}\in\C$, with $Y$ a fibrant object. 
	Let $\xymatrix{f \ar@2{~>}@<0.25ex>[r]^{H} & g}$ be a $w$-homotopy. Then 
	there is a fibrant $q$-homotopy $H'$ such that $[H'] = [H]$. 
\end{proposition}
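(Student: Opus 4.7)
The proposition is the culmination of the three preceding lemmas, and I would prove it by chaining them together in the order they are stated. The idea is to progressively improve the $\sigma$-homotopy $H$ until it becomes a fibrant $w$-homotopy, while at each step staying in the same equivalence class.

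First I would apply Lemma~\ref{lema0}, which uses only that $Y$ is fibrant, to produce a $\sigma$-homotopy $f \mrhpy{H_1} g$ with $[H_1] = [H]$ whose cylinder objects $W_1$ and $Z_1$ are both fibrant. Then, since $W_1$ is now a fibrant object, Lemma~\ref{lema:3pasos} applies and yields a fibrant $\sigma$-homotopy $f \mrhpy{H_2} g$ (i.e. one whose structural arrow $s_2$ is a fibration) such that $[H_2] = [H_1]$. Finally, since $H_2$ is a fibrant $\sigma$-homotopy, Lemma~\ref{lem:3pasosbis} provides a fibrant $w$-homotopy $f \mrhpy{H'} g$ with $[H'] = [H_2]$. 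Transitivity of the equivalence relation then gives $[H'] = [H]$, which is exactly what is claimed.

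There is no real obstacle here beyond checking that the hypotheses of each lemma are met by the output of the previous one, and all three lemmas have already been proved; the conceptual work was done in establishing them via Corollary~\ref{coro:paralema3pasos} and the strategy outlined in \ref{sin:strategy}. The only thing worth making explicit in the write-up is that Lemma~\ref{lema0} produces $W_1$ fibrant (not merely $Z_1$ fibrant), so that the second step is indeed applicable; this is visible in the proof of that lemma, where $W'$ is constructed as the codomain of a cofibration followed by a fibration to $1$. Thus the statement follows immediately by a three-step reduction.
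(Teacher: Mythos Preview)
Your proof is correct and follows exactly the paper's own approach: the paper's proof is the single line ``Use in turn Lemmas \ref{lema0}, \ref{lema:3pasos} and \ref{lem:3pasosbis}.'' One minor imprecision: in Lemma~\ref{lema0} the object $W'$ is not directly factored through $1$, but rather through $Y$ via the fibration $h'$; its fibrancy then follows because $Y$ is fibrant and fibrations compose.
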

\begin{proof}
		Using in turn Lemmas \ref{lema0}, \ref{lema:3pasos}, and \ref{lem:3pasosbis} we get a fibrant $q$-homotopy $H'$ such that $[H'] \germ [H]$, and the proof finishes by Lemma \ref{lema:para3pasos}.
\end{proof}

\subsection{Some further properties of the homotopy bicategory} \label{moreprop}

{In the remainder of this section we will prove two results that hold for $q$-homotopies, namely Lemmas \ref{lemma_A} and \ref{homotopia a izquierda da a derecha}, which are bicategorical versions of two basic results in \cite{Quillen}. These results will be used in the following sections, and both depend on the following {\em double lifting property}:}

\begin{proposition} \label{doubleLs_prop} ${}$

a) Let  $A \mr{i} X$ be a trivial cofibration, and $Y \mr{p_0} B$, $Y \mr{p_1} B$ be morphisms in $\cc{C}$, such that $(p_0,\, p_1): Y \to B \times B$ is a fibration. Then, 
%$(i,\, p_0,\, p_1)$ has the following property:
for each pair of invertible $2$-cells $\gamma_0$,
$\gamma_1$
%$(a,\, \gamma_0,\, b_0)$,
%$(a,\, \gamma_1,\, b_1)$
as in the two diagrams on the left, there exist a morphism $f$ and invertible $2$-cells $\lambda_0,\; \lambda_1,\; \rho_0,\; \rho_1$ satisfying the equations written below: 
\begin{equation} \label{doubleLs}
\xymatrix@R=7ex@C=9ex
      {
       A  \ar[r]^{a} \ar[d]^{i}  \ar@{}[rd]|{\gamma_0 \Downarrow}
      & Y  \ar[d]^{p_0}                  
      \\
          X  \ar[r]^{b_0} 
      & B
       }
\xymatrix@=3ex{\\ ,} \;\;
\xymatrix@R=7ex@C=9ex
       {
      A  \ar[r]^{a} \ar[d]^{i}  \ar@{}[rd]|{\gamma_1 \Uparrow}
    & Y  \ar[d]^{p_1}
    \\
      X  \ar[r]^{b_1} 
    & B
       } 
\hspace{2ex}\xymatrix@=3ex{\\ \leadsto} \hspace{2ex}
\xymatrix@R=7ex@C=9ex
      {
          A  \ar[r]^{a} \ar[d]^{i} 
             \ar@{}[rd]|(.2){\lambda_0 \Uparrow}
             \ar@{}[rd]|(.7){\rho_0 \; \Downarrow}  
        & Y  \ar[d]^{p_0}                 
        \\
          X  \ar[r]^{b_0} \ar[ru]^{f}
        & B
       } 
\xymatrix@=3ex{\\ ,} \;\;
\xymatrix@R=7ex@C=9ex
      {
          A  \ar[r]^{a} \ar[d]^{i} 
             \ar@{}[rd]|(.2){\lambda_1 \Downarrow} 
             \ar@{}[rd]|(.7){\rho_1 \; \Uparrow} 
        & Y  \ar[d]^{p_1}                  
        \\
          X  \ar[r]^{b_1} \ar[ru]^{f}
        & B
      }
\end{equation}

$
\gamma_0 \circ p_0 \ast \lambda_0 \;=\; \rho_0 \ast i \,,
\;\;\;\;\;\; 
p_1 \ast \lambda_1 \,\circ\, \gamma_1 \;=\;  \rho_1 \ast i \,, 
\;\;\;\;\;\;
\lambda_0 \circ \lambda_1 \,=\, id,
\;\;\; 
\lambda_1 \circ \lambda_0 \,=\, id.
$

b) Let $Y \mr{p} B$ be a trivial fibration and $A \mr{i_0} X$, 
$A \mr{i_1} X$ such that
$\binom{i_0}{i_1}: A \amalg A \to X $ is a cofibration. Then, 
%$(i_0,\, i_1,\, p)$ satisfies: 
for each pair of invertible $2$-cells $\gamma_0$,
$\gamma_1$ 
%$(a_0,\, \gamma_0,\, b)$,
%$(a_1,\, \gamma_1,\, b)$ 
as in the two diagrams on the left, there exist a morphism $f$ and invertible $2$-cells 
$\lambda_0,\; \lambda_1,\; \rho_0,\; \rho_1$ satisfying the equations written below: 
\begin{equation} \label{doubleLsop}
\xymatrix@R=7ex@C=9ex
      {
       A  \ar[r]^{a_0} \ar[d]^{i_0} 
          \ar@{}[rd]|{\gamma_0 \Uparrow}
      & Y  \ar[d]^{p}                  
      \\
          X  \ar[r]^{b} 
      & B
       }
\xymatrix{\\ ,} \;\;
\xymatrix@R=7ex@C=9ex{
      A  \ar[r]^{a_1} \ar[d]^{i_1}  
         \ar@{}[rd]|{\gamma_1 \Downarrow}
    & Y  \ar[d]^{p}
    \\
      X  \ar[r]^{b} 
    & B
       }
\hspace{2ex}\xymatrix{ \\ \leadsto} \hspace{2ex} 
\xymatrix@R=7ex@C=9ex
      {
          A  \ar[r]^{a_0} \ar[d]^{i_0} 
             \ar@{}[rd]|(.2){\lambda_0 \Uparrow}
             \ar@{}[rd]|(.7){\rho_0 \; \Downarrow}  
        & Y  \ar[d]^{p}                 
        \\
          X  \ar[r]^{b} \ar[ru]^{f}
        & B
       } 
\xymatrix{\\ ,} \;\;
\xymatrix@R=7ex@C=9ex
      {
          A  \ar[r]^{a_1} \ar[d]^{i_1} 
             \ar@{}[rd]|(.2){\lambda_1 \Downarrow} 
             \ar@{}[rd]|(.7){\rho_1 \; \Uparrow} 
        & Y  \ar[d]^{p}                  
        \\
          X  \ar[r]^{b} \ar[ru]^{f}
        & B
      }
\end{equation} 
$
\gamma_0 \,\circ\, \rho_0 \ast i_0  \;=\; p \ast \lambda_0\,,
\;\;\;\;\;\; 
\rho_1 \ast i \,\circ\, \gamma_1 \;=\;  p \ast \lambda_1\,, 
\;\;\;\;\;\;
\rho_0 \circ \rho_1 \,=\, id,
\;\;\; 
\rho_1 \circ \rho_0 \,=\, id.
$
\end{proposition}
\begin{proof}
$
\xymatrix@R=2.5ex
 {\\ \text{a) Apply axiom {\bf M1} to the $2$-cell} \;\;\;\;}
\xymatrix@R=7ex@C=9ex
      {
       A  \ar[r]^{a} \ar[d]^{i}  
          \ar@{}[rd]|{(\gamma_0, \gamma_1^{-1}) \Downarrow}
      & Y  \ar[d]^{(p_0, p_1)}                  
      \\
          X  \ar[r]^{(b_0, b_1)} 
      & B \times B
       }
$

\hspace{4ex} b) is the dual statement.
\end{proof}

\begin{remark} \label{doublelifting_p}
We think of the diagrams in \eqref{doubleLs} (resp. \eqref{doubleLsop}), as 
{\em double lifting properties}. That is, we say that a triple 
$(i,\, p_0,\, p_1)$ (resp. $(i_0,\, i_1,\, p)$) satisfies {\bf LLs} (resp. {\bf LLs$^{\text{op}}$}) if any pair of squares as in the left admits fillers as in Lemma \ref{lem:lifting_prop} {\bf Ls}, for which we can take the same $f$, and the $2$-cells $\lambda_0,\; \lambda_1$ (resp. $\rho_0,\; \rho_1$) mutually inverse. 
\end{remark}

\begin{lemma} \label{invertiblep*full}
Let {$Z \mrr{f}{g} X \mr{p} Y$ be arrows in 
$\cc{C}$, with $\,p\,$ a trivial fibration,}
and $p*f \mrhpy{H} p*g$ be a $q$-homotopy with invertible cells
$H = (C,\,h,\,\eta,\, \varepsilon)$ (we refer to the notation in Definition \ref{lhpy}). Then there exists a  
$q$-homotopy $f \mrhpy{H'} g$ such that 
$[H] = [p\ast H'] = p \ast [H']$, which \mbox{furthermore has the same cylinder as $H$.} Note then that if $H$ is fibrant, so is $H'$.
Note also that by Proposition \ref{prop:siemprefaithfulenHoC} such an $[H']$ is unique. 
\end{lemma}
\begin{proof}
We apply proposition \ref{doubleLs_prop} b) as follows:
$$
\xymatrix@=7ex
      {
          Z  \ar[r]^{g} \ar[d]^{d_1}  \ar@{}[rd]|{\varepsilon \Uparrow}
        & X  \ar[d]^{p}
        \\
          W  \ar[r]^{h} 
        & Y
       } 
\;\; \xymatrix@=3ex{\\ and} \;\;
\xymatrix@=7ex
      {
          Z  \ar[r]^{f} \ar[d]^{d_0}  \ar@{}[rd]|{\eta \Downarrow}
        & X  \ar[d]^{p}                  
        \\
          W  \ar[r]^{h} 
        & Y
       }
\;\;\;\; \xymatrix@=3ex{\\ \leadsto} \;\;\;\;
\xymatrix@=7ex
      {
          Z  \ar[r]^{g} \ar[d]^{d_1} 
             \ar@{}[rd]|(.3){\varepsilon' \Uparrow} 
             \ar@{}[rd]|(.7){\rho_0 \; \Downarrow} 
        & X  \ar[d]^{p}                  
        \\
          W  \ar[r]^{h} \ar[ru]^{h'\!\!\!\!}
        & Y
      }
\;\; \xymatrix@=3ex{\\ and} \;\;
\xymatrix@=7ex
      {
          Z  \ar[r]^{f} \ar[d]^{d_0} 
             \ar@{}[rd]|(.3){\eta' \Downarrow}
             \ar@{}[rd]|(.7){\rho_1 \; \Uparrow}  
        & X  \ar[d]^{p}                 
        \\
          W  \ar[r]^{h} \ar[ru]^{h'\!\!\!\!}
        & Y
       } 
$$
This yields a fibrant $q$-homotopy $f \mrhpy{H'} g$,  
$H' = (C,\,h',\,\eta',\, \varepsilon')$ such that 
$$
p\ast \varepsilon' \,=\, \varepsilon \circ \rho_0 \ast d_1\,, 
\;\;\;  
p \ast \eta' \,=\, \rho_1 \ast d_0 \circ \eta\,,
\;\;\;  
\rho_0 \circ \rho_1 = id\,.
$$
It follows 
$$  
p\ast \varepsilon'\circ \rho_1 \ast d_1 \,=\, \varepsilon\,,
\;\;\;\;
\rho_0 \ast d_0 \circ p \ast \eta' \,=\, \eta\,.
$$ 
We show that $[p * H'] = [H]$ using Corollary \ref{coro:lemapara3pasosconC1igualC2} with $H^1 = H$ and 
$H^2 = p*H'$. It remains to check the hypotheses of the Corollary, which in this case are exactly the two preceding equations.
\end{proof}

Our next task is to prove a statement analogous to Lemma \ref{invertiblep*full}, but for the \mbox{$q$-homotopies} $I^\mu$ given by the $2$-cells $\mu$ of $\cc{C}$, not necessarily invertible.
It is here that we will need the full strength of the lifting property {\bf L}, and not just {\bf Ls}.

The following proposition expresses the content of axiom {\bf M1} when the object $A$ is the biInitial object of the bicategory.  

\begin{proposition} \label{p_* ess surjective-full}
Given a cofibrant object $Z$ and a trivial fibration $X \mr{p} Y$, the functor $\cc{C}(Z,\,X) \mr{p_\ast} \cc{C}(Z,\,Y)$ is essentially surjective and essentially full.
\end{proposition}
\begin{proof}
Consider the lifting property {\bf L} for the arrows $0 \mr{} Z$ and \mbox{$X \mr{p} Y$.} The categories $\cc{C}(0,\,X),\; \cc{C}(0,\,Y)$ are equivalent to the singleton category $\nn{1}$, thus since equivalences are stable by biPullingback, the functor 
\mbox{$\nn{P} \mr{} \cc{C}(Z,\,Y)$} in the diagram below is an equivalence of categories:
$$
\vcenter{
%\hspace{5ex}
\xymatrix@=7ex
    {
     \cc{C}(Z,\,X) \ar@/^5ex/[rr]^{}
                   \ar[r]^(.65){h} 
                   \ar@/_4ex/[rd]^{p_\ast}  
   & \nn{P} \ar[d]_{\simeq} 
            \ar[r]^(.4){}
            \ar@{}[rd]|{\cong}
   & \nn{1} \ar[d]^{\simeq} 
   \\
   {} 
   & \cc{C}(Z,\,Y) \ar[r]^{} 
   & \nn{1}
    }}
$$
Since $Z$ is cofibrant, by axiom {\bf M1} the functor $h$ is essentially surjective and essentially full, thus so is $p_\ast$.
\end{proof}
Recalling  Remark \ref{MM1} it follows: 
\begin{corollary} \label{factorizationofmu} 
Let $Z \mrr{f}{g} X \mr{p} Y$, with $p$ a trivial fibration and $Z$ cofibrant, and let ${p * f} \Mr{\mu} {p * g}$ be a 2-cell. Then:

There exist morphisms $Z \mrr{f'}{g'} X$, invertible $2$-cells  
$p*f \Mr{\rho_1} p*f'$, $p*g \Mr{\rho_2} p*g'$, and  a 2-cell 
$f' \Mr{\delta} g'$ such that 
$p*\delta \circ \rho_1 = \rho_2 \circ \mu$. \cqd  
\end{corollary}

\begin{lemma} \label{for2cellp*full}
Let $Z \mrr{f}{g} X \mr{p} Y$ be arrows in $\cc{C}$, with $\,p\,$ a trivial fibration, and $Z$ cofibrant. 
Given any $2$-cell $p*f \Mr{\mu} p*g$ in $\cc{C}$, there exists a 2-cell $f \Mr{\alpha} g$ in $\cc{H}o(\C,\cc{W})$ such that \mbox{$[I^\mu] = p \ast \alpha$,}
 Note that by 
Proposition \ref{prop:siemprefaithfulenHoC} such an $\alpha$ is unique. 

Furthermore, $\alpha$ can be taken to be the class of a sequence 
{$[H_2,\, H_3,\, H_1]$} as in \ref{sin:varios2}, with these three homotopies fibrant.
\end{lemma}
\begin{proof}
By the corollary above we have a factorization of $\mu$,
$\mu = \rho_2^{-1} \circ p\ast \delta \circ \rho_1$. 
Since $[I^{\rho_1}]$ and $[I^{\rho_2^{-1}}]$ can be taken fibrant and with invertible cells, we can apply Proposition \ref{invertiblep*full} to get $H_1$, $H_2$ such that $p \ast [H_1] = [I^{\rho_1}]$, $p \ast [H_2] = [I^{\rho_2^{-1}}]$. We define $\alpha$ as the class of the sequence {$[H_2,\, I^\delta,\, H_1]$}, then we have
$$
p \ast \alpha = 
p \ast [H_2] \circ p \ast [I^\delta] \circ p \ast [H_1] =
[I^{\rho_2^{-1}}] \circ p \ast [I^\delta] 
\circ [I^{\rho_1}] =
[I^\mu]. 
$$
\end{proof}

By combining Lemmas \ref{invertiblep*full} and \ref{for2cellp*full}, we will prove:

\begin{lemma} \label{p*full}
Let $Z \mrr{f}{g} X \mr{p} Y$ be arrows in $\cc{C}$, with $\,p\,$ a trivial fibration, and $Z$ cofibrant. 
Given any $2$-cell $p*f \Mr{\alpha} p*g$ in $\cc{H}o^f(\C,\cc{W})(Z,Y)$, there exists a 2-cell $f \Mr{\beta} g$ in $\cc{H}o^f(\C,\cc{W})(Z,X)$ such that \mbox{$\alpha = p \ast \beta$} (where the superscript ``$f$" indicates the sub-bicategory {defined by the} fibrant homotopies (indistinctly $w$- or $q$-), see Notation \ref{not:HomlHomr} below).
\end{lemma}

\begin{proof}
Clearly it suffices to show this for the case in which $\alpha = [H]$, the class of a fibrant q-homotopy. 
%Given such a $H$, 
We use proposition \ref{prop:Hcompde3} and write 
$
[H] = [\varepsilon \circ (h * H^C) \circ \eta]
$, note that this is the composition  
$p*f \Mr{\eta} h * d_0 \Xr{h * H^C} h * d_1 \Mr{\varepsilon} p*g$ so we cannot apply Lemmas \ref{invertiblep*full} and \ref{for2cellp*full} directly to these 2-cells. Since $Z$ is cofibrant, we can however consider the lifting property ${\bf Ls.}$, as in \eqref{L1}, separately for $h * d_0$ and $h * d_1$

\begin{equation*}
\vcenter{\xymatrix@R=9ex@C=10ex
   {
    0 \ar@{}[dr] | {\cong \, \Downarrow \;\; }
      \ar[r] \ar[d] & X \ar[d]^p
    \\
    Z \ar[r]^{h * d_0}
    \ar@<-1ex>[r]_{h * d_1}
    & Y
   }}
\qquad
\vcenter{\xymatrix{\leadsto}}
\hspace{5ex}
\vcenter{\xymatrix@R=9ex@C=11ex
      {
          0  \ar[r]^{} \ar[d]^{} 
             \ar@{}[rd]|(.22){\; \cong \, \Uparrow \, }
             \ar@{}[rd]|(.7){\cong \, \Downarrow \, \rho_0,\rho_1}  
        & X  \ar[d]^{p}                 
        \\
          Z  \ar@{}@<-1.5ex>[ru]^[@]{s_0,s_1}
          \ar[ru]
         \ar[r]^{h * d_0}
    \ar@<-1ex>[r]_{h * d_1}
        & Y
       }}
\hspace{1ex}
\end{equation*}

We can then define:

\begin{enumerate}
    \item $\alpha_1$ as the composition of the 2-cells $p*f \Mr{\eta} h * d_0 \Xr{\rho_0^{-1}} p*s_0$.
    
    \item $\alpha_2$ as the composition 
    $p*s_0 \Mr{\rho_0}  h * d_0 \Xr{h * H^C} h * d_1 \Xr{\rho_1^{-1}} p * s_1$, as defined in Remark \ref{2compoH}, which is a single fibrant homotopy with invertible 2-cells,
    
    \item $\alpha_3$ as the composition of the 2-cells $p*s_1 \Mr{\rho_1} h * d_1 \Mr{\varepsilon} p*g$,
\end{enumerate}

and note that $[H] = [I^{\alpha_3}] \circ [\alpha_2] \circ [I^{\alpha_1}]$. Then, if we apply Lemma \ref{invertiblep*full} to $\alpha_2$ and Lemma \ref{for2cellp*full} to $\alpha_1$ and $\alpha_3$, we get three 2-cells $\beta_i$ in $\cc{H}o^f(\C,\cc{W})(Z,X)$ such that 
$p * \beta_1 = [I^{\alpha_1}]$,
$p * \beta_2 = [\alpha_2]$, 
$p * \beta_3 = [I^{\alpha_3}]$.
Since then 
$$
\alpha = [H] =
 [I^{\alpha_3}] \circ [\alpha_2] \circ [I^{\alpha_1}]
= p\ast \beta_3 \circ p\ast \beta_2 \circ p\ast \beta_1 = p\ast (\beta_3 \circ \beta_2 \circ \beta_1),% = p\ast \beta
$$
we can take $\beta_3 \circ \beta_2 \circ \beta_1$ for the desired $\beta$.
%\vspace{-3.6ex}
\end{proof}

The following Lemma corresponds to Quillen's \cite[\S 1, Lemma 7]{Quillen}. 

\begin{lemma} \label{lemma_A}
Given a cofibrant object $Z$ and a trivial fibration 
$X \mr{p} Y$, the functor $\cc{H}o^f(\C,\cc{W})(Z,X) \mr{p_*} \cc{H}o^f(\C,\cc{W}) (Z,Y)$ of postcomposition with $p$ (see \ref{sin:horiz_comp}) is an equivalence of categories.
\end{lemma}
\begin{proof}
By Proposition  \ref{prop:siemprefaithfulenHoC} we know that $p_*$ is faithful, and by Proposition \ref{p_* ess surjective-full} it follows that it is essentially surjective. The fact that it is full is proved in Lemma \ref{p*full}. 
\end{proof}

\subsection{Switching between right and left homotopies} \label{sub:interplay}

The following lemma is a bicategorical version of  
\cite[I, \S1, Lemma 5 (i)]{Quillen}. It allows to {\em switch} from left to right homotopies {(recall Definitions \ref{lhpy} and \ref{rhpy})} without changing the equivalence class. 
In addition, the right homotopy can be taken with respect to an arbitrary fixed $q$-path-object, which by the dual of Remark
\ref{rem:existencilindros} can be assumed to be cofibrant.

\begin{lemma}\label{homotopia a izquierda da a derecha}
	Let 
	$\xymatrix{X \ar@<1ex>[r]^f \ar@<-1ex>[r]^g &
		Y} \in\C$, with $X$ a cofibrant object, $\homotopy{f}{H}{g}$ a left $q$-homotopy, and $P$ a $q$-path-object for $Y$. Then there exists a right $q$-homotopy $\homotopy{f}{K}{g}$ with path object $P$ such that $[K]=[H]$. 
\end{lemma}
\begin{proof}
Let $H = (C,h,\eta,\eps)$, $C = (W,d_0,d_1,s,\alpha_0,\alpha_1)$ and $P = (V,c_0,c_1,t,\beta_0,\beta_1)$ (see Definitions \ref{lhpy} and \ref{rhpy}).
We define 2-cells $\gamma_0$ and $\gamma_1$ as the composites:
$$
\gamma_0 \;\meq{(1)} \; 
c_1*t*f \Xr{\beta_1 * f} f \Xr{\eta} h*d_0, 
\hspace{4ex} 
\gamma_1 \;\meq{(2)}\; 
f*s*d_0 \Xr{f*\alpha_0} f \Xr{\beta_0^{-1}*f } c_0*t*f
$$

We assume first that $H$ has invertible cells, thus $\eta$ is invertible, and so is $\gamma_0$. Since $\gamma_1$ was already invertible, we apply Proposition \ref{doubleLs_prop} a) and we have (note that $d_0$ is a trivial cofibration by Lemma \ref{A cofibrant da d y c cofibraciones triviales}, {it is here where we need $X$ to be cofibrant}):
$$
\xymatrix@R=7ex@C=9ex
      {
          X  \ar[r]^{t\ast f} \ar[d]^{d_0}  \ar@{}[rd]|{\gamma_0 \Downarrow}
        & V  \ar[d]^{c_1}                  
        \\
          W  \ar[r]^{h} 
        & Y
       }
 \xymatrix@=3ex{\\ ,} \;
\xymatrix@R=7ex@C=9ex
      {
          X  \ar[r]^{t\ast f} \ar[d]^{d_0}  
             \ar@{}[rd]|{\gamma_1 \Uparrow}
        & V  \ar[d]^{c_0}
        \\
          W  \ar[r]^{f\ast s} 
        & Y
       } 
\;\;\;\; \xymatrix@=3ex{\\ \leadsto} \;\;\;\;
\xymatrix@R=7ex@C=9ex
      {
          X  \ar[r]^{t\ast f} \ar[d]^{d_0} 
             \ar@{}[rd]|(.2){\lambda_0 \Uparrow}
             \ar@{}[rd]|(.7){\rho_0 \; \Downarrow}  
        & V  \ar[d]^{c_1}                 
        \\
          W  \ar[r]^{h} \ar[ru]^{\ell}
        & Y
       } 
 \xymatrix@=3ex{\\ ,} \;
\xymatrix@R=7ex@C=9ex
      {
          X  \ar[r]^{t\ast f} \ar[d]^{d_0} 
             \ar@{}[rd]|(.2){\lambda_1 \Downarrow} 
             \ar@{}[rd]|(.7){\rho_1 \; \Uparrow} 
        & V  \ar[d]^{c_0}                  
        \\
          W  \ar[r]^{f\ast s} \ar[ru]^{\ell}
        & Y
      }
$$
$$
\rho_0 \ast d_0 \;\meq{(3)}\; \gamma_0 \,\circ \, c_1 \ast \lambda_0\,,
\;\;\; 
\rho_1 \ast d_0  \;\meq{(4)}\; c_0 \ast \lambda_1 \,\circ\, \gamma_1 \,,
\;\;
\lambda_0 \circ \lambda_1 \,\meq{(5)}\, id,
\;\;
\lambda_1 \circ \lambda_0 \,=\, id.
$$

\noindent We define $k = \ell \ast d_1$ $: X \mr{d_1} W \mr{\ell} V$, and
the $2$-cells $\delta$ and $\epsilon$ as the composites:
$$
\delta \;\meq{(a)}\; f \Xr{f \ast \alpha_1^{-1}} f \ast s \ast d_1 
\Xr{\rho_1 \ast d_1} c_0 \ast \ell \ast d_1,
\hspace{4ex}
\epsilon \;\meq{(6)}\; c_1 \ast \ell \ast d_1 \Xr{\rho_0 \ast d_1} 
h \ast d_1 \Xr{\varepsilon} g
$$
We define the right homotopy $K$ as $K = (P,k,\delta,\epsilon)$.

\vspace{1ex} 

We prove now that $[K] = [H]$, that is,  
$\widehat{FK} = \widehat{FH}$, see Definition \ref{def:notacionFyrelationHo}. 

\vspace{1ex}

{\em
Since $F$ is a $2$-functor, to simplify the notation we can omit the letter $F$, and we do so in what follows.}

\vspace{1ex}

\noindent Recall the $2$-cells from \ref{sin:varios1}
$$
d_0 \Xr{\widehat{C}} d_1 \;\;\text{and}\;\; c_0 \Xr{\widehat{P}} c_1 
\;, \;\;\; s \ast \widehat{C} \;\meq{(b)}\; \alpha_1^{-1} \circ \alpha_0 
\;\;\text{and}\;\; 
\widehat{P} \ast t \;\meq{(7)}\; \beta_1^{-1} \circ \beta_0
$$
from (a) and (b) it follows that 
$$
\delta \;\meq{(8)}\; f \Xr{f \ast \alpha_0^{-1}} f \ast s \ast d_0 
\Xr{f \ast s \ast \widehat{C}} f \ast s \ast d_1 
\Xr{\rho_1 \ast d_1} c_0 \ast \ell \ast d_1  
$$
Recall now that $\widehat{H}$ and $\widehat{K}$ are defined as the composites
$$
\widehat{H} \;\meq{(9)}\; f \Xr{\eta} h \ast d_0 
\Xr{h \ast \widehat{C}} 
h \ast d_1 \Xr{\varepsilon} g 
\;\; \text{and}\;\; 
\widehat{K} \;\meq{(10)}\; f \Xr{\delta} c_0 \ast t 
\Xr{\widehat{P} \ast t} 
c_1 \ast t \Xr{\epsilon} g 
$$

\smallskip
\noindent
We prove now that $\widehat{K} = \widehat{H}$, using the elevator calculus described in \S \ref{sub:prelimenappendix}, in Appendix \ref{sec:transfer}:
$$
\vc{\xymatrix@C=0ex
    {
     f \clcputo{\widehat{K}}
   \\
     g
    }}
\;\; \meq{(10)} \;\;
\vc{\xymatrix@C=-0.8ex@R=3.5ex 
    {
     {} & f \cla{\delta} & {} 
   \\
    c_0 \clcputo{\widehat{P}} & {} & t \eqq      
   \\
    c_1 & {} \clcc{\epsilon} & t       
   \\
     {} & g & {}                              
   }}
\;\; \meq{(6)(8)} \;\;
\vc{\xymatrix@C=1ex@R=3.5ex
    {
     f \eqq & {} \dr \dcr{\alpha_0^{-1}} & {} \dl  
   \\
     f \eqq & s \eqq & d_0 \clcputo{\widehat{C}}
   \\
     f \dr \dcr{\rho_1} & \dl s & d_1 \eqq 
   \\
     c_0 \clcputo{\widehat{P}} & \ell \eqq & d_1  \eqq
   \\
     c_1  \dl \dcr{\rho_0} & \ell \dr & d_1  \hs{-2} \eqq
   \\
    {} & \hs{-35} h \hs{-20} \dl & \hs{-4} d_1 \hs{-10} \dr   
                             \dcl{\varepsilon}
   \\
     {} & {} & \hs{-20}{g}
    }}
\;\; \meq{(el)} \;\;
\vc{\xymatrix@C=1ex@R=3.5ex
    {
     f \eqq & {} \dr \dcr{\alpha_0^{-1}} & {} \dl  
   \\
     f \dl & s \dcl{\rho_1} \dr & d_0 \eqq
   \\
     c_0  \clcputo{\widehat{P}} & \ell \eqq & d_0 \eqq 
   \\
     c_1 \dl \dcr{\rho_0}   & \ell \dr & d_0 \eqq    
   \\
     {} &  \hs{-16} h \ar@<-1.6ex>@{=}[d] & d_0   
                                           \clcputo{\widehat{C}}
   \\
     {} & \hs{-35} h \hs{-20} \dl & \hs{-4} d_1 \hs{-10} \dr   
                             \dcl{\varepsilon}
   \\
     {} & {} & \hs{-20}{g}  
    }}
\;\; \meq{(3)(4)} \;\;
\vc{\xymatrix@C=1ex@R=3.5ex
    {
     f \eqq & {} \dr \dcr{\alpha_0^{-1}} & {} \dl  
   \\
     f \dl & s \dc{\gamma_1} & d_0 \dr
   \\
     c_0 \eqq & t \dl \dcr{\lambda_1} & f \dr
   \\
     c_0 \clcputo{\widehat{P}} & \ell \eqq & d_0 \eqq
   \\
     c_1 \eqq & \ell \dl \dcr{\lambda_0} & d_0 \dr
   \\
     c_1 \dla & t \dc{\gamma_0} & f \dr
   \\ 
     {} &  \hs{-16} h \ar@<-1.6ex>@{=}[d] & d_0   
                                           \clcputo{\widehat{C}}
   \\
     {} & \hs{-35} h \hs{-20} \dl & \hs{-4} d_1 \hs{-10} \dr   
                             \dcl{\varepsilon}
   \\
     {} & {} & \hs{-20}{g}  
  }}
\;\; \meq{(el)(5)} \;\;
$$ 
$$
\;\; \meq{(el)(5)} \;\;
\vc{\xymatrix@C=1ex@R=3.5ex
    {
     f \eqq & {} \dr \dcr{\alpha_0^{-1}} & {} \dl  
   \\
     f \dl & s \dc{\gamma_1} & d_0 \dr
   \\
     c_0 \clcputo{\widehat{P}} & t \eqq & f \eqq
   \\
     c_1 \dla & t \dc{\gamma_0} & f \dr
   \\ 
     {} &  \hs{-16} h \ar@<-1.6ex>@{=}[d] & d_0   
                                           \clcputo{\widehat{C}}
   \\
     {} & \hs{-35} h \hs{-20} \dl & \hs{-4} d_1 \hs{-10} \dr   
                             \dcl{\varepsilon}
   \\
     {} & {} & \hs{-20}{g}  
    }}
\;\; \meq{(7)} \;\;
\vc{\xymatrix@C=1ex@R=3.5ex
    {
     f \eqq & {} \dr \dcr{\alpha_0^{-1}} & {} \dl  
   \\
     f \dl & s \dc{\gamma_1} & d_0 \dr
   \\
     c_0  \dl \dcr{\beta_0} & t \dr & f \eqq     
   \\
     {} \dr \dcr{\beta_1^{-1}} & {} \dl & f \eqq
   \\
     c_1 \dla & t \dc{\gamma_0} & f \dr         
   \\ 
     {} &  \hs{-16} h \ar@<-1.6ex>@{=}[d] & d_0   
                                           \clcputo{\widehat{C}}
   \\
     {} & \hs{-35} h \hs{-20} \dl & \hs{-4} d_1 \hs{-10} \dr   
                             \dcl{\varepsilon}
   \\
     {} & {} & \hs{-20}{g}  
    }}
\;\; \meq{(1)(2)} \;\;
\vc{\xymatrix@C=-0.8ex@R=3.5ex 
    {
     {} & f \cla{\eta} & {} 
   \\
    h \eqq & {} & d_0 \clcputo{\widehat{C}}      
   \\
    h & {} \clcc{\varepsilon} & d_1       
   \\
     {} & g & {}                              
   }}
\;\; \meq{(9)} \;\;
\vc{\xymatrix@C=0ex
    {
     f \clcputo{\widehat{H}}
   \\
     g
    }}
$$
 
{
This finishes the proof for $H$ with invertible cells. Let 
now $[H]$ be with arbitrary cells. By Proposition \ref{prop:Hcompde3} we have 
$$
[H] = [\eps \circ (h * H^C) \circ \eta] = 
%[\eps \circ K \circ \eta] = 
[I^\eps] \circ [h * H^C] \circ [I^\eta]
%[I^\eps] \circ [K] \circ [I^\eta]. 
$$

Recalling that $h\ast H^C$ has invertible cells, we can apply the above and get $K'$ such that $[K'] = [h\ast H^C]$. We can take the desired $q$-homotopy $K$ to be $\eps \circ K' \circ \eta$, {defined by the dual of Remark \ref{2compoH}}, which still has $P$ as path object 
and satisfies

$$[K] = [\eps \circ K' \circ \eta] = [I^\eps] \circ [K'] \circ [I^\eta] = [I^\eps] \circ [h * H^C] \circ [I^\eta] = [H]
$$
(recall that $I^\eps$ as well as $I^\eta$ can be considered to be right or left homotopies, see Remark \ref{Imuleft=Ymurigth}) 
}
\end{proof}

\section{Replacement for model bicategories} \label{sec:replacement-model}

{ 
In this section we will define a fibrant-cofibrant replacement for model bicategories.
Using just the axioms of a model bicategory, and mimicking the 1-dimensional construction in \cite[I, proof of Th. 1]{Quillen}, we can define an assignation on objects and arrows
$\xymatrix@C=8ex{\cc{C} \ar@{~>}[r]^{RQ} & 
\cc{C}}$, 
\mbox{$X\mr{f} Y \,\leadsto \,RQX \mr{RQf} RQY$,}
such that $RQX$ is a fibrant-cofibrant object for all $X \in \cc{C}$. We call such an assignation a  {\em preassignment} of bicategories (see Definition \ref{def:assignment}). As we mentioned in the introduction of this paper, we are considering Quillen's axioms in \cite{Quillen} as a base for our work.
Since the appearance of \cite{Quillen}, in the vast literature on model categories, these axioms have been strengthened and for example in \cite{Hirsch}, \cite{Hovey},  there is the assumption of the existence of functorial factorizations which allow to define, for a model category $\bf C$, a fibrant-cofibrant replacement functor $\bf C \mr{} C$.
However, we are not assuming a (pseudo)functorial factorization in the model bicategory axioms, and this is the reason why we do not have a fibrant-cofibrant replacement pseudofunctor $\cc{C} \xr{RQ} \cc{C}$.

Our strategy for having nevertheless a fibrant-cofibrant replacement pseudofunctor, which can be used to show a localization theorem as in \cite[I, proof of Th. 1]{Quillen} is the following.
 First we note that by definition $\cc{C}$ and $\cc{H}o^f(\C,\cc{W})$ have the same objects and arrows, so that $RQ$ above is equivalently a preassignment $\xymatrix@C=8ex{\cc{H}o^f(\C,\cc{W}) \ar@{~>}[r]^{RQ} & 
\cc{H}o^f(\C,\cc{W})}$ (see Definition \ref{def:assignment}).
Then, by  results in Appendix \ref{sec:transfer} on \emph{transfer of structures}, we will extend this preassignment to a  pseudofunctor $R_\ell Q$, {which is the composition of two pseudofunctors $R_\ell$ and $Q$, satisfying that
$R_\ell Q = RQ$ on objects and arrows.} $R_\ell Q$ is a fibrant-cofibrant replacement \mbox{$\cc{H}o^f(\C,\cc{W}) \xr{R_\ell Q} \cc{H}o^f(\C,\cc{W})$} defined on the fibrant homotopies.
In particular, we can consider the pseudofunctor $q$ defined as the composition $$q : \cc{C} \mr{i} \cc{H}o^f(\C,\cc{W}) \xr{R_\ell Q} \cc{H}o^f(\C,\cc{W}).$$
Note that, even though there may not be structural 2-cells of $\cc{C}$ making $\xymatrix{\cc{C} \ar@{~>}[r]^{RQ} & \cc{C}}$ above into a pseudofunctor, the pseudofunctor structure of $q$ is showing that there are structural homotopies serving that purpose, just like in the original case \cite{Quillen}. 
In Section \ref{sec:locthm}, we will show that $q$, or 
more precisely its co-restriction to $\cc{H}o^f_{fc}(\C,\cc{W})$,
 is the pseudofunctor giving the localization of $\cc{C}$ at the weak equivalences.
}

\begin{notation} \label{not:HomlHomr}
{Until now, we have usually omitted to specify that we were working with left homotopies. Since in this section we will work with both left and right homotopies, for clarity we will indicate with a superscript ``$\ell$" or ``$r$" which are the \mbox{$2$-cells} of the homotopy bicategory being considered. }

The fibrant left homotopies (and cofibrant right homotopies) will play a major role in the remainder of the paper. We denote by $\cc{H}o^f(\C,\cc{W})^\ell$ the sub-bicategory of 
$\cc{H}o(\C,\cc{W})^\ell$ {with the same objects and arrows, whose 2-cells are}
given by the fibrant homotopies (which can be either $w$- or $q$-homotopies, by Lemma 
\ref{lem:3pasosbis}). 
{ By this we mean precisely that the 2-cells of $\cc{H}o^f(\C,\cc{W})^\ell$ are the classes of finite sequences 
{$[H^n,\,\ldots \,,H^2,\, H^1]$} of homotopies, as in \ref{sin:varios2}, which are all fibrant. 
{We define $\cc{H}o^f_{inv}(\C,\cc{W})^\ell \subseteq \cc{H}o^f(\C,\cc{W})^\ell$ as the sub-bicategory for which, in addition, we ask all these homotopies to have invertible cells.

The reader will note that it could be possible to vertically compose the homotopies $H^1,H^2...,H^n$ above as in Lemma \ref{lemacompo}, so that $[H^n,\,\ldots \,,H^2,\, H^1]$ is also the class of a single fibrant homotopy, as is the case when $X$ is cofibrant and $Y$ is fibrant by Lemma \ref{2-cellsWofc}. However, this fact is not relevant for the definitions of $\cc{H}o^f_{inv}(\C,\cc{W})^\ell$ and $\cc{H}o^f(\C,\cc{W})^\ell$, which also include the case of arbitrary objects $X$ and $Y$.}
}

Note that $\cc{H}o^f(\C,\cc{W})^\ell$ {and $\cc{H}o^f_{inv}(\C,\cc{W})^\ell$} are indeed sub-bicategories of 
$\cc{H}o(\C,\cc{W})^\ell$, this follows because the composition $r*H$ is defined to have the same cylinder as $H$, and $H*\ell$ with a cylinder that has the same ``$s$".   
This is not the case if we consider arbitrary $q$-homotopies instead of the fibrant ones: arbitrary $q$-homotopies are not closed under horizontal composition and do not form a bicategory, see \ref{sin:horiz_comp}.

We will also work with the dual cofibrant right homotopies, that we denote with a $c$ instead of an $f$, $\cc{H}o^c(\C,\cc{W})^r$.

{\em As a mnemonic, remember that superscripts refer to  homotopies, and subscripts refer to objects.}
\end{notation}

{
\begin{proposition}[{\bf Cofibrant replacement}] \label{cofibrantprop}
There exist a pseudofunctor 
\mbox{$\cc{H}o^f(\C,\cc{W})^\ell \mr{Q} \cc{H}o^f(\C,\cc{W})^\ell$}
%(that is, $QX$ is cofibrant for each $X$), 
and a pseudonatural transformation $Q \Mr{\rho} id$ such that $\rho_X$ is a trivial fibration for each $X$. 
All the $QX$ are cofibrant objects and, when restricted to cofibrant objects, $Q$ and $\rho$ coincide with the identity.
%We denote with the same letter $Q$
\end{proposition}
}

% (that is, if $X$ is cofibrant, $QX = X$, 
%$\rho_X = id_X$, $Q[H] = [H]$)

%\vspace{1ex}
\begin{proof}
{\bf Cofibrant replacement on objects and arrows.} 

\vspace{1ex}

For each object $X \in \cc{C}$, we choose a cofibrant object $QX$ and a trivial fibration \mbox{$QX \mr{\rho_X} X$.} If $X$ is already cofibrant, we require $QX=X$ and $\rho_X=id_X$. Note that this can be obtained by factorizing  $0 \mr{} X$ using axiom {\bf M2}.

	     For each arrow $X \mr{f} Y$, we choose an arrow 
$QX \mr{Qf} QY$ and an invertible 2-cell
$$
f \ast \rho_X \Mr{\rho_f} \rho_Y \ast Qf \,, \hspace{5ex}
\vcenter{\xymatrix{ QX \ar@{}[dr]|{\Downarrow \rho_f} \ar[d]_{Qf} \ar[r]^{\rho_X} & X \ar[d]^f \\
 QY \ar[r]^{\rho_Y} & Y }} 
$$.
  
If $X$ and $Y$ are cofibrant, we require $Qf=f$ and $\rho_f=id_f$. Note that this can be obtained using axiom {\bf M1s}:
$$
 \vcenter{
\xymatrix@C=1.5pc@R=3pc{0 \ar[rr]^{} \ar[d]_{} &\ar@{}[d]|{\cong}& QY \ar[d]^{\rho_Y} \\
			QX \ar[r]_{\rho_X} & X \ar[r]_f & Y}}
\quad \leadsto \quad
\vcenter{\xymatrix@C=1.5pc@R=1.2pc{0 \ar@{}[dr]|{\cong}\ar[rr]^{} \ar[dd]_{} && QY \ar[dd]^{\rho_Y} \\
	& \ar@{}[dr]|{ \cong \; \Uparrow \; \rho_f \;\;} \\
			QX \ar@{-->}[rruu]|{\comw{MM^M} Qf \comw{MM^M} } \ar[r]_{\rho_X} & X \ar[r]_f& Y}}
$$	
{(we take $\rho_f$ as the inverse of the 2-cell given by the axiom)} 			

\begin{remark} \label{rem:Qfwesifloes}
If $X$ is fibrant, then so is $QX$ by axiom {\bf M3}. 
\end{remark}
\begin{remark} \label{Qwe=we}
If $f$ is a weak equivalence, then so is $Qf$ by axiom {\bf M5}.
\end{remark}

\smallskip

Recall from Remark \ref{choose_I^mu} that, for each
2-cell $\rho_f$,
$I^{\rho_f}$ can be taken to be a fibrant $w$-homotopy {with invertible cells}. 
{By Proposition \ref{invertibleH} the class of such a 
\mbox{$w$-homotopy} is an invertible 2-cell of $\cc{H}o^f(\C,\cc{W})$
that we will {abuse and denote also} by $\rho_f$.} Recall also that, since $\rho_X$ is a trivial fibration and $QX$ is cofibrant, from Lemma \ref{lemma_A} we know that 
$\cc{H}o^f(\cc{C},\cc{W})^\ell(W, QX)
 \xr{(\rho_{X})_\ast} \cc{H}o^f(\cc{C},\cc{W})^\ell(W, X)$ is full and faithful for $W$ in the image of $Q$.
     
\vspace{1ex}
  
We refer now to Definitions \ref{def:assignment}, 
\ref{psntd} and \ref{transferdatum} in Appendix \ref{sec:transfer}, with 
\mbox{$\cc{B} = \cc{D} = \cc{H}o^{f}(\cc{C},\cc{W})^\ell$}. The considerations we just made show that we have a transfer datum from the preassignment determined by $Q$ to the identity considered as a {pseudofunctor. In this way, by Theorem \ref{thetheorem}, $Q$ adquires a pseudofunctor structure in such a way that $\rho$ becomes a pseudonatural transformation}
\end{proof}

\smallskip
A fibrant replacement is a cofibrant replacement in the dual bicategory. To set up the notation we write the dual of Proposition \ref{cofibrantprop}, whose
 proof uses the {\em forward transfer} Theorem  \ref{theoptheorem} instead of Theorem \ref{thetheorem}.

\begin{proposition}[{\bf Fibrant replacement}] \label{fibrantprop}
There exist a pseudofunctor 
\mbox{$\cc{H}o^c(\C,\cc{W})^r \mr{R} \cc{H}o^c(\C,\cc{W})^r$}  and a pseudonatural transformation $id \Mr{\lambda} R$ such that 
$\lambda_X$ is a trivial cofibration for each $X$. All the $RX$ are fibrant objects and, when restricted to fibrant objects, 
$R$ and $\lambda$ coincide with the identity.
\cqd
\end{proposition}

\vspace{1ex}

\begin{sinnadastandard} \label{composition}
\noindent
{\bf Composing a fibrant and a cofibrant replacement.}
 
It is clear how to compose a fibrant and a cofibrant replacement on objects and arrows, but not so clear how to compose the corresponding pseudofunctors. As expected, Lemma \ref{homotopia a izquierda da a derecha} and its dual provide the key. They allow to define the ``switch" 2-functors 
$s$ and $s^{op}$, $s\,s^{op} = id$, $s^{op}\,s  = id$, which are the identity on objects and arrows, and in the $2$-cells switch the homotopies from left to right and viceversa, but the $2$-cell remains the same. 
 Note that Lemma \ref{homotopia a izquierda da a derecha} holds when all the involved objects are cofibrant, thus its dual will hold when they are fibrant.
Note also that, by the dual of Remark \ref{rem:Qfwesifloes}, $R$ and $\lambda$ can be restricted to 
$\cc{H}o^{c}_c(\cc{C},\cc{W})^r
\mr{R} 
\cc{H}o^{c}_c(\cc{C},\cc{W})^r
$, $id \Mr{\lambda} R$.

{
We abuse and denote with the same letters $\cc{H}o^f(\C,\cc{W})^\ell \mr{Q} \cc{H}o^f_c(\C,\cc{W})^\ell$,
\mbox{$\cc{H}o^c_c(\C,\cc{W})^r \mr{R} \cc{H}o^c_{fc}(\C,\cc{W})^r$,} and
$\cc{H}o^{c}_{fc}(\cc{C},\cc{W})^r \mr{s^{op}} 
 \cc{H}o^{f}_c(\cc{C},\cc{W})^\ell$ the pseudofunctors which are either co-restrictions to their images or post-compositions with inclusions, of the respective 
 endo-pseudofunctors.
}
This allows to define a \emph{fibrant replacement on left homotopies}, that is the arrow $R_l$ defined as the composition $R_\ell = s^{op}\,R\,s$, which can be composed with $Q$ as follows:
$$
\xymatrix@C=5ex
   {
    \cc{H}o^{f}(\cc{C},\cc{W})^\ell       
                 \ar[r]^Q
  & \cc{H}o^{f}_c(\cc{C},\cc{W})^\ell     
                 \ar[r]^s
                 \ar@{->}@<-4pt>`u[rrr]`[rrr]^{R_\ell}[rrr]
  & \cc{H}o^{c}_c(\cc{C},\cc{W})^r        
                 \ar[r]^R
  & \cc{H}o^{c}_{fc}(\cc{C},\cc{W})^r     
                 \ar[r]^{s^{op}} 
  & \cc{H}o^{f}_{fc}(\cc{C},\cc{W})^\ell  
   }
$$
Note that $R_\ell X = RX$, $R_\ell f = Rf$. There is a pseudonatural transformation 
$id \Mr{\lambda_\ell} R_\ell$ defined as $\lambda_\ell = s^{op}\,\lambda\,s$.
Note that the \mbox{$X$-components} of $\lambda_\ell$ are the same that those of $\lambda$, $(\lambda_\ell)_X = \lambda_X$, we also have $(\lambda_\ell)_f = \lambda_f$. We define a fibrant-cofibrant replacement pseudofunctor as the composite \mbox{$R_\ell\,Q$.}
\end{sinnadastandard}

From Propositions \ref{cofibrantprop} and \ref {fibrantprop} we have:

\begin{proposition} [{\bf Fibrant-cofibrant replacement}]
\label{fib-cofib}
There exist a pseudofunctor 
\mbox{$\cc{H}o^f(\C,\cc{W})^\ell \xr{R_\ell Q} 
                                     \cc{H}o^f(\C,\cc{W})^\ell$}
and pseudonatural transformations 
\mbox{
      $
      id \Ml{\rho} Q
\xymatrix@C=5ex
    {
     {} \ar@2{->}[r]^{\lambda_\ell\, Q}
   & {}
    }
      R_\ell\,Q
     $ 
   } 
such that $\rho_X$ is a trivial fibration and  
$(\lambda\,Q)_X$ is a trivial cofibration  for each $X$.
All the $(R_\ell Q) X$ are fibrant-cofibrant objects and, when restricted to fibrant-cofibrant objects, $R_\ell Q$, $\rho$ and 
$\lambda_\ell\, Q$ coincide with the identity.
\end{proposition}
From Remark \ref{Qwe=we} and its dual it follows
\begin{remark} \label{RQwe=we}
Note that if $f$ is a weak equivalence, so is $(R_\ell Q) f$. \cqd
\end{remark}

\section{The localization theorem.}  \label{sec:locthm}

Much as in the 1-dimensional case, Lemma \ref{homotopia a izquierda da a derecha} can be used together with its dual to give \emph{niceness} {properties of homotopies between arrows in $\cc{C}(W,\,Z)$ when $W$ is cofibrant and $Z$ is fibrant.}  Note that, using also Proposition \ref{2-cellsWofc}, it follows that in this case all notions of homotopy {($w$ or $q$, right or left, right cofibrant or left fibrant)} coincide {in the strong sense that the equality (given by the switch functors) is an isomorphism of categories}
$\cc{X}(W,Z) =  \cc{Y}(W,Z)$, where $\cc{X}$, $\cc{Y}$ stand for the homotopy bicategories corresponding to any two choices of these concepts. We have
$$
\HoC = \cc{H}o_{fc}(\C,\cc{W}) 
 = \cc{H}o^c_{fc}(\C,\cc{W})^r = \cc{H}o^f_{fc}(\C,\cc{W})^\ell
$$

As a consequence, although for the fibrant-cofibrant replacement both left and right homotopies are necessary, the latter ones become \emph{implicit}. Thus in {this section} we will drop the superscript ``$\,\ell\,$" in the 
{ $\cc{H}o$ bicategories}, which \emph{will always have left homotopies as $2$-cells.} 

\vspace{1ex}

\begin{definition} \label{sin:defdeHoCcomoHofc}
We {set} $\HoC =\Wofc$, that is the full sub-bicategory of $\Wo$ given by the fibrant-cofibrant objects.
\end{definition}
From the Lemma \ref{lemacompo} we have
\begin{remark}
The 2-cells of $\HoC$ are classes of a single $q$-homotopy. For any pair of $q$-homotopies $[H_1,\,H_2]$ in $\HoC$,
there is a single $q$-homotopy $[H]$ in $\HoC$ such that 
$[H] =  [H_1,\,H_2]$.
\end{remark}

We consider the $2$-functor 
$\cc{C} \mr{i} \cc{H}o^f(\C,\cc{W})$ in \ref{sin:varios2}, Remark \ref{choose_I^mu}, and the fibrant-cofibrant replacement pseudofunctor 
$R_\ell Q$ in Proposition \ref{fib-cofib}. 
Note that $R_\ell Q$ takes its values in $\HoC$, {here it is convenient to denote its co-restriction as}
$\cc{H}o^f(\C,\cc{W}) \mr{r} \HoC$.
We have thus the following commutative diagram {, where $q$ is defined as the composite $q = r\,i$ and $j$ denotes the inclusion}: 
\begin{equation} \label{diagram_q}
\xymatrix@C=8ex
    {
     \cc{C} \ar[d]_i  \ar[r]^{q} 
   & \HoC   \ar[d]^j  
   \\
     \cc{H}o^f(\C,\cc{W})  \ar[r]^{R_\ell Q} \ar[ur]^{r}
   & \cc{H}o^f(\C,\cc{W})   
    }
\end{equation}
We will show that $q$ is the localization of $\C$ with respect to $\cc{W}$, that is, we will prove the following theorem:

\vspace{1ex}

\begin{theorem} \label{teo:localizationformodel}
The pseudofunctor $q = r \,i$ is the localization of $\C$ with respect to $\cc{W}$, in the sense that it maps the arrows of $\cc{W}$ to equivalences, and furthermore for each bicategory $\cc{D}$, precomposition with $q$,
\mbox{$Hom(\HoC,\cc{D}) \mr{q^*} Hom_{\cc{W},\Theta}(\C,\cc{D})$} is a biequivalence of bicategories, where $Hom_{\cc{W},\Theta}(\C,\cc{D})$ stands for the full sub-bicategory of $Hom(\C,\cc{D})$ given by those pseudofunctors that send weak equivalences into equivalences.   
\end{theorem}

The proof of this theorem depends on the following three results.

\begin{proposition} \label{qok}
The pseudofunctor $q$ maps the arrows of $\cc{W}$ to equivalences.
\end{proposition}
\begin{proof}
If $f$ is a weak equivalence, by Remark \ref{RQwe=we} $q(f)$ is also a weak equivalence, but now between fibrant-cofibrant objects. Thus by Proposition \ref{prop:compderetractos} it is a composite of split weak equivalences (see Definition \ref{def:retrsect}), which by  
\cite[Prop. 3.45]{DDS.loc_via_homot} is an equivalence. 
\end{proof}

Let $i\cc{W}$ be the class of arrows in 
$\cc{H}o^{f}(\C,\cc{W})$ of the form $i\,f$ with $f \in \cc{W}$.  
\begin{proposition} \label{medioteorema}
{Consider the diagram \eqref{diagram_q}.}
For each bicategory $\cc{D}$, precomposing with $r$,
$Hom(\HoC,\cc{D}) \mr{r^*} Hom(\cc{H}o^{f}(\C,\cc{W}),\cc{D})
%Hom_{(i\cc{W},\Theta)}(\cc{H}o^{f}(\C,\cc{W})^\ell,\cc{D})
$
factors through the full sub-bicategory 
$Hom_{i\cc{W},\Theta}(\cc{H}o^{f}(\C,\cc{W}),\cc{D})$ and together with $j^*$ establishes a biequivalence of bicategories:
$$
Hom(\HoC,\cc{D}) \mrl{r^*}{j^*}  
Hom_{i\cc{W},\Theta}(\cc{H}o^{f}(\C,\cc{W}),\cc{D})
$$
\end{proposition}
\begin{proof}
Let $F \in Hom(\HoC,\cc{D})$ and 
$f \in \cc{W}$, then $(r^* F)(if) = F r\,if = Fqf$ which is an equivalence by Proposition \ref{qok}. This shows the factorization of $r^*$. 

We already have $j^*\, r^* = (r\,j)^* = id^* = id$, so it only remains to establish an equivalence 
$id  \simeq r^*\,j^* {= (jr)^* = (R_\ell Q)^*}$. 
Let $id \Ml{\rho^*} Q^*
\xymatrix@C=6ex{{} \ar@2{->}[r]^{{(\lambda_\ell Q)}^*}&{}}  {(R_\ell Q)}^*$ be the pseudonatural transformations induced by those in Proposition \ref{fib-cofib}, to conclude the proof we will show that they are equivalences.
By item 3. in \ref{equivalencias}, it suffices to check that $((\rho^*)_F)_X = F(\rho_X)$ and $(({(\lambda_\ell Q)}^*)_F)_X = F(\lambda_{QX})$ are equivalences, for each 
$\cc{H}o^f(\C,\cc{W}) \mr{F} \cc{D}$ that maps weak equivalences to equivalences, and for each $X$.
{Since $\rho_X$ and $\lambda_{QX}$ are indeed weak equivalences (see Propositions \ref{cofibrantprop} and \ref{fibrantprop}), this is precisely the case.}
\end{proof}

\vspace{1ex}

We have also the following result proven in
\cite[Theorem 3.42, Remark 3.47]{DDS.loc_via_homot}:
\begin{theorem} \label{teo:mediapuposta}
The $2$-functor $\C \mr{i} \cc{H}o^{f}(\C,\cc{W})$ is such that precomposing with $i$ establishes a biequivalence  of bicategories, which in fact is an isomorphism:
$$
Hom_{i\cc{W},\Theta}(\cc{H}o^{f}(\C,\cc{W}),\cc{D}) \mr{i^*} Hom_{\cc{W}, \Theta}(\C,\cc{D})
$$

\vspace{-4.5ex}

\cqd 
\end{theorem}

\vspace{1ex}

Finally, since $q^* = (r\,i)^* = i^*\,r^*$, putting together Proposition \ref{medioteorema} and Theorem \ref{teo:mediapuposta} finishes the proof of the localization theorem, Theorem \ref{teo:localizationformodel}.  
\qed

\vspace{2ex}

We finish the paper by showing that the set-theoretical difficulties in the construction of localizations are resolved for the homotopy bicategory. From Proposition \ref{homotopia a izquierda da a derecha} (and its dual version) it follows immediately:

\begin{proposition}[Single cylinder] \label{singlecylinder} 
Let $X \mrr{f}{g} Y$ be any pair of arrows in 
\mbox{$\HoC = \cc{H}o_{fc}(\C,\cc{W})\,$,} and fix a cylinder $C$. Then all the $q$-homotopies $\homotopy{f}{}{g}$ can be \mbox{chosen} with $C$ as its cylinder. \qed
\end{proposition}
 
Recall that a bicategory is said to be locally small if, for each pair of objects $X,Y$, the category $\C(X,Y)$ is small, that is, has a set of objects and a set of arrows. From Proposition \ref{singlecylinder} it follows:
\begin{corollary} \label{coro:size}
If $\C_{fc}$ is locally small (in particular when $\C$ is so), then so is $\HoC$. \qed
\end{corollary}

\begin{appendix}

\section{{Transfer of structure for bicategories}}
\label{sec:transfer}

\subsection{Assignments and preAssignments.} \label{sub:prelimenappendix}

We {introduce} the notions of assignment and preassignment between bicategories, as well as lax transformations between them, and show how they are as lax functors and lax natural transformations but with some of the structure omitted.
{ The computations in this Appendix are written in the following {\em elevator notation}, so in order to make them easier to follow we write these definitions using that notation too.}

\vspace{2ex}
\noindent
{\bf Elevator calculus.\footnote{Developed in 1969 by the second author for draft use.}} 
A $2$-cell \; $f \Mr{\alpha} g$ \;is written \; 
$\vcenter{\xymatrix@C=0ex{f \dcell{\alpha} \\ g }} \;$. 
Each elevator diagram represents a composition of $2$-cells in a fixed bicategory. Objects are omitted, arrows are composed from right to left, and 2-cells from top to bottom. Using the basic move ``$(el)$", {which corresponds to the middle-four interchange as shown below,} we form configurations of cells that fit valid equations in order to prove new equations. {This is justified by the Coherence result in \ref{basic notation}.}
$$ \label{ascensor}
\!\!\!\!\!\!\!\!\!
\begin{tabular}{c}
\mbox{middle-four interchange:} \\
     \xymatrix{
A \ar@<2ex>[r]^{f_1} \ar@<-2ex>[r]_{f_2} \ar@{}[r]|{\Downarrow  \ \alpha} & B \ar@<2ex>[r]^{g_1} \ar@<-2ex>[r]_{g_2} \ar@{}[r]|{\Downarrow  \ \beta} & C
}  \\
$(g_2 * \alpha) \circ (\beta * f_1) = 
(\beta * f_2) \circ (g_1 * \alpha) = 
\beta * \alpha$
\end{tabular}
%\vcenter{},
 \leadsto  \qquad
\vcenter{\xymatrix@C=0ex
         {
             g_1 \dcell{\beta} & f_1 \did
          \\
             g_2 \did & f_1 \dcell{\alpha}
          \\
             g_2  &  f_2 
         }}
{\;\;\;\meq{(el)}\;\;\;}
\vcenter{\xymatrix@C=0ex
         {
             g_1 \did & f_1 \dcell{\alpha}
          \\
             g_1 \dcell{\beta} & f_2 \did
          \\
             g_2 & f_2 
         }}
{\;\;\;\meq{(el)}\;\;\;}
\vcenter{\xymatrix@C=0ex
         {g_1 \dcell{\beta} 
             & f_1 \dcell{\alpha} \\
             g_2 & f_2 
         }}
%\hspace{5ex} \meq{(el)}
$$
{We emphasize that the elevator calculus is nothing more and nothing else than a different notation for 2-cells and their compositions when written as word sequences (not pasting diagrams), which renders the middle-four interchange transparent and allows a visual guide to find and prove equations.}
\begin{definition} \label{def:assignment} \label{PF} ${}$

1) A \emph{preassignment} between bicategories $\cc{B} \Mrl{F} \cc{D}$ consists of the following data: 

For each object $X$ and for each arrow 
$X \mr{f} Y$ of $\cc{B}$, an object $FX$ and an arrow 
\mbox{$FX \mr{Ff} FY$ of $\cc{D}$.}

\vspace{1ex}
 
2) An \emph{assignment} between bicategories is a preassignment, plus:

For each $2$-cell $f \Mrl{\alpha} g$ in $\cc{B}$, a $2$-cell 
$Ff \Mrl{F\alpha} Fg$ in $\cc{D}$ such that for all $X,Y$, 
\mbox{$\cc{B}(X,Y) \mr{F} \cc{D}(FX,FY)$} satisfies the functor axioms.

\vspace{1ex}

3) A \emph{lax functor}  between bicategories is an assignment, plus:

For each object $X$ of $\cc{B}$, a 2-cell
$id_{FX} \Xr{\xi_X} F(id_X)$, referred to as the \emph{unit} of the lax functor,
and for each pair of arrows \mbox{$X \mr{f} Y \mr{g}  Z$} of 
$\cc{B}$, 
a 2-cell $Fg \ast Ff \Xr{\phi_{f,g}} F(g \ast f)$, referred to as the \emph{multiplication} of the lax functor, such that
(we will omit the subindices of $\xi$ and $\phi$):

i) $\xi$ and $\phi$ satisfy the unit and associativity monoidal axioms:

For each $X \mr{f} Y\in \cc{B}$, \hspace{.2cm}
{\bf LF1.} 
$\vcenter{\xymatrix@C=-2ex{Ff \did && \dcell{\xi} \\
Ff && Fid_X  \\
& Ff \clold{\phi} }}
\vcenter{\xymatrix@C=0ex{ \; = \; }}
\vcenter{\xymatrix@C=0ex{Ff \did \\ Ff }} \quad$
\hspace{.2cm}{\bf LF2. } 
$\vcenter{\xymatrix@C=-2ex{\dcell{\xi} && Ff \did \\
Fid_X  && Ff \\
& Ff \clold{\phi} }}
\vcenter{\xymatrix@C=0ex{ \; = \; }}
\vcenter{\xymatrix@C=0ex{Ff \did \\ Ff }}$

\vspace{1ex}
 
For each $X \mr{f} Y \mr{g} Z \mr{h} W\in \cc{B}$, 
\hspace{.2cm}{\bf LF3.} 
$\vcenter{\xymatrix@C=-2.5ex{Fh && Fg {\color{white}XX}  & Ff \did \\
& F(hg) \ar@{}[u]|{\phi} 
\ar@{-}[lu]
\ar@<.5ex>@{-}[ru]
&& Ff \\
&& F(hgf) \clold{\phi}   }}
\vcenter{\xymatrix@C=0ex{ \; = \; }}
\vcenter{\xymatrix@C=-2.5ex{Fh \did & {\color{white}XX} Fg && Ff \\
Fh && F(gf) \ar@{}[u]|{\phi} 
\ar@<-.5ex>@{-}[lu]
\ar@{-}[ru] \\
& F(hgf) \clold{\phi} }}$

\vspace{1ex}

ii)  \mbox{$\ast \circ (F\times F) \Mr{\phi} F\circ \ast:\cc{B}(X,Y)\times \cc{B}(Y,Z) \mr{} \cc{B}(X,Z)$} satisfies the axioms of a natural transformation with components $\phi_{f,g}$. We will often use this naturality, thus we make it explicit: 

For each $\xymatrix{X \ar@<1.6ex>[r]^{f} 
             \ar@{}@<-1.3ex>[r]^{\!\! {\alpha} \, \!\Downarrow}
             \ar@<-1.1ex>[r]_{g} & Y      \ar@<1.6ex>[r]^{s} 
             \ar@{}@<-1.3ex>[r]^{\!\! {\beta} \, \!\Downarrow}
             \ar@<-1.1ex>[r]_{t} & Z}\in \cc{B}$, \hspace{2ex}              
      {\bf N$\phi$.} 
$\vcenter{\xymatrix@C=-2ex{Fs && Ff \\
& F(s f) \clold{\phi} \dcellbb{F(\beta\alpha)} \\
& F(t g) }}
\vcenter{\xymatrix@C=0ex{ \; = \; }}
\vcenter{\xymatrix@C=-2ex{Fs \dcellb{F\beta} && Ff \dcellb{F\alpha} \\ 
Ft && Fs \\
& F(t g) \clold{\phi} }}$

\vspace{1ex}

A \emph{pseudofunctor} is a lax functor such that all the 2-cells $\phi$ and $\xi$ are invertible.
\end{definition}

\begin{definition} ${}$ \label{psntd} \label{PN}
Let $\cc{B} \Mrl{G} \cc{D}$ and $\cc{B} \Mrl{F} \cc{D}$ be  preassignments of bicategories:

\vspace{1ex}

1) 
A \emph{lax transformation} $\theta:{F}\,\hpy \,{G}$ between preassignments consists of a family of arrows 
$FX \xr{\theta_{X}} GX$, one for each $X\in \cc{B}$, 
and a family of 2-cells 
$$
\,\theta_f\!:\, Gf \ast \theta_X \Mr{} \theta_Y \ast Ff, \hspace{5ex}
\vcenter{\xymatrix{ FX \ar@{}[dr]|{\Downarrow \theta_f} \ar[d]_{Ff} \ar[r]^{\theta_X} & GX \ar[d]^{Gf} \\
 FY \ar[r]^{\theta_Y} & GY }} 
$$

2) If $F$ and $G$ are assignments, a \emph{lax transformation} is required to satisfy the condition

\noindent {\bf LN0.} For each ${X}\cellrd{{f}}{\alpha}{{g}}{Y}\in \cc{B}$, $\quad\vcenter{\xymatrix@C=-0pc{
                      {G}{f} \dcellb{{G}\alpha}  
		      &&
		      \theta_{X}  \did 
		      \\
		       {G}{g} \dl
		       &&
		      \theta_{X} \dr \ar@{}[dll]|{\theta_{g}}
		      \\
		       \theta_{{Y}} 
		       && 
		      {F}{g} 
		      }}
     \vcenter{\xymatrix@C=-.4pc{\quad = \quad }}
      \vcenter{\xymatrix@C=-0pc{
		      {G}{f} \dl
		      &&
		      \theta_{X} \dr \ar@{}[dll]|{\theta_{f}} 
		      \\
		      \theta_{{Y}} \did
		      &&
		      {F}{f} \dcellb{{F}\alpha} 
		      \\ 
		      \theta_{Y}
		      &&
		      {F}{g} 
		      }}$

\vspace{1ex}

3) If $F$ and $G$ are lax functors, a \emph{lax transformation} is required, in addition, to satisfy:

\noindent {\bf LN1.} For each ${X}\in \cc{B}$, 
$\quad \vcenter{\xymatrix@C=0pc
	   {
		   \theta_{{X}} \did 
		& \dcell{\xi}  
		\\
		\theta_{{X}} 
	        &
	        {F}id_{X}
		}}
\vcenter{\xymatrix@C=0pc{\; = \; }}
\vcenter{\xymatrix@C=0pc
       {
         \dcell{\xi} 
	& 
	\theta_{X} \did
	\\
	{G}id_{X} \dl
	& \theta_{X} \ar@{}[dl]|{\theta_{id_{X}}}  \dr
	\\
        \theta_{X}
	&
	{F}id_{X}
	}}$   
	
\vspace{1.5ex}

\noindent {\bf LN2.} For each ${X}\stackrel{{f}}\rightarrow {Y}\stackrel{{g}}\rightarrow {Z}\in \cc{B}$, 
$\quad \vcenter{\xymatrix@C=-0pc{
 	   {G}{g} \did	 
	   & & 
 	   {G}{f} \dl 
	   &&
 	   \theta_{X} \ar@{}[dll]|{\theta_{f}} \dr
 	   \\
 	   {G}{g} \dl
 	   & & 
 	   \theta_{Y} \ar@{}[dll]|{\theta_{g}} \dr
 	   &&
 	    {F}{f} \did
 	   \\ 
 	   \theta_{{Z}} \did
            & &  {F}{g} &&
            {F}{f} \\
            \theta_{{Z}} 
            &&&
            \!\!\!\!\! {F}({g}{f}) \!\!\!\!\! \clold{\phi} &
            }}
 \vcenter{\xymatrix@C=-.4pc{\quad = \quad }}
\vcenter
   {
   \xymatrix@C=-0.5pc
        {{G}{g} 
 		 &&
 		 {G}{f} 
 		 & \quad & \theta_{X} \did 
 	   \\
 	   &
 		 {G}({g}{f}) \clold{\phi}   \dl
 		&& &
 		\theta_{X} \ar@{}[dll]_{\theta_{{g}{f}}}  \dr
		\\
 		&
 		 \theta_{Z} 
 		& &&
 		 {F}( {g}{f}) 
 		}
   }
 $

\smallskip

\vspace{1ex}

Lax transformations between lax functors as defined above are no other than the {\em usual lax natural transformations} (as in for example \cite{Lackcompanion}), and we refer to them like that as well.

An op-lax transformation (in all the three cases above) is as a lax transformation but with the 2-cells $\theta_f$ reversed, satisfying, as the case may be, dual axioms.

A \emph{pseudo transformation} is a lax transformation with the 2-cells $\theta_f$ invertible.
 Note than in this case the lax and op-lax axioms are equivalent. The collection $\theta_f^{-1}$ determines an op-pseudo transformation.

\emph{Note that we can consider pseudonatural transformations between lax functors.}
\end{definition}

\begin{definition}
A \mbox{\emph{modification}} $\rho: \theta \rightarrow \eta$ between lax transformations (in the three cases in Definition \ref{psntd}) is a family of 2-cells $\theta_{X}\Xr{\rho_{X}}\eta_{X}$ of $\cc{D}$, one for each $X\in \cc{B}$, such that:

\smallskip

\noindent {\bf M.} For each $ X\mr{f} Y\in \cc{B},
\; $\hspace{.2cm}
$\vcenter{\xymatrix@C=-0pc{
		      Gf \dl 
		      && 
		      \theta_{X} \dr \ar@{}[dll]|{\theta_f} 
		      \\
		      \theta_Y \dcellb{\rho_Y}
		      && 
		      Ff \did  
		      \\
		      \eta_Y
		      &&
		      Ff
		      }}
     \vcenter{\xymatrix@C=-.4pc{\quad = \quad }}
      \vcenter{\xymatrix@C=-0pc{
		      Gf \did 
		      && 
		      \theta_X \dcellb{\rho_X} 
		      \\
		      Gf \dl 
		      && 
		      \eta_X \dr \ar@{}[dll]|{\eta_f} 
		      \\
		      \eta_Y
		      && 
		      Ff
		      }}$
\end{definition}

\vspace{1ex}

\begin{notation} \label{notation} 
For each pair of bicategories $\cc{B},\cc{D}$, the definitions above determine the 2-categories in the following diagram, 
with objects as indicated in the notation, arrows the pseudo transformations, and 2-cells the modifications.
We have forgetful 2-functors $|-|$, faithful at the level of arrows and fully-faithful at the level of 2-cells:
\begin{equation} \label{eq:42-cat}
\xymatrix@R=0pt
    {
     lax\cc{F}unc(\cc{B}, \cc{D}) \ar[rd]^{|-|} & {} & {}
  \\ {} & \cc{A}ssg(\cc{B}, \cc{D}) \ar[r]^{|-|} 
        & pre\cc{A}ssg(\cc{B}, \cc{D})
  \\ psd\cc{F}unc(\cc{B}, \cc{D}) \ar[ru]^{|-|} & {} & {}
    }
\end{equation}

We write $\cc{X}(\cc{B}, \cc{D})$ to indicate that we refer to one of these four 2-categories.
We omit to describe the compositions of arrows and 2-cells in these 2-categories, as they are ubiquitous in the literature.  
We note that we have other {\em larger} 2-categories 
$\cc{X}_\ell(\cc{B}, \cc{D})$,
%$pre\cc{A}ssg_\ell(\cc{B}, \cc{D})$,  
%$\cc{A}ssg_\ell(\cc{B}, \cc{D})$, 
%$lax\cc{F}unc_\ell(\cc{B}, \cc{D})$, and 
%$psd\cc{F}unc_\ell(\cc{B}, \cc{D})$,
whose arrows are lax transformations, 
connected by similar forgetful 2-functors,
which will not be relevant for our work here.
\end{notation} 

\subsection{Quasiequivalences}
Recall from \ref{sin:varios1} that an arrow $X \mr{f} X'$ in a bicategory $\cc{D}$ is a 
{\em quasiequivalence} if for every object $Y$ in $\cc{D}$ the functors $\cc{D}(Y,X) \mr{f_*} \cc{D}(Y,X')$ and 
$\cc{D}(X',Y) \mr{f^*} \cc{D}(X,Y)$ are full and faithful.

\begin{proposition} [{\bf pointwise quasiequivalences}]\label{quasipointwise}
{If an arrow in any of the four 2-categories 
$\cc{X}(\cc{B}, \cc{D})$
in \eqref{eq:42-cat}, that is a 
pseudo transformation \mbox{$F \Mr{\theta} G$},} is such that for every $X$ in 
$\cc{B}$, $FX \Mr{\theta_X} GX$ is a \mbox{quasiequivalence} in $\cc{D}$, then it is a quasiequivalence {in 
$\cc{X}(\cc{B}, \cc{D})$}. In this case we say that $\theta$ is a \emph{pointwise quasiequivalence}.  
\end{proposition}

\begin{proof}
We will show the case where $\theta_*$ is full and faithful, the case 
$\theta^*$ is dual.  
Let $\cc{B} \mr{H} \cc{D}$ be any lax or pseudo functor, and 
$H \Mr{\alpha} F$, $H \Mr{\beta} F$ be pseudonatural transformations. Consider for each $X \in \cc{B}$ the following diagram:
$$
\xymatrix@C= 10ex
   {
    \hspace{5ex} 
    \cc{X}(\cc{B},\, \cc{D})(H, F)(\alpha, \beta) 
    \ar[r]^{\theta_\ast}
    \ar[d]
  & \cc{X}(\cc{B},\, \cc{D})(H, G)
            (\theta \!\circ \!\alpha, \theta \!\circ\! \beta)
    \ar[d]
 \\   
    \hspace{5ex} 
    \cc{D}(HX, FX)(\alpha_X, \beta_X) 
    \ar[r]^{{(\theta_X)}_\ast}
  & \cc{D}(HX, GX)
     (\theta \!\circ \!\alpha)_X, (\theta \!\circ\! \beta)_X)
   }
$$
where the arrow in the bottom is a bijection by hypothesis. We will show that then the upper horizontal arrow is also a bijection.

Let $\theta \!\circ \!\alpha \mr{\mu} \theta \!\circ\! \beta$ be a modification, so for each $X$ we have a $2$-cell 
\mbox{$(\theta \!\circ \!\alpha)_X \Mr{\mu_X} (\theta \!\circ\! \beta)_X$, }
thus there is a unique $2$-cell $\alpha_X \Mr{\rho_X} \beta_X$ such that $\theta_X \ast \rho_X = \mu_X$. It remains to prove the modification axiom {\bf M.} for $\rho$.
By the modification axiom for $\mu$ we have
$$
\vc{\xymatrix@C=-4pt               %1%           
    { %1%
     Gf \sd{-2}{4} \na{20}{(\theta \circ \alpha)_f}
   & (\theta \!\circ\!\alpha)_X \sd{2}{-4} 
  \\  %2%
     (\theta \!\circ\!\alpha)_Y \cl{12}{\mu_Y}
   & Hf \eq{0}
  \\  %3%
     (\theta \!\circ\!\beta)_Y
   & Hf
   }} 
\hs{10} \meq{M} \hs{10}
\vc{\xymatrix@C=-4pt               %2%      
    { %1%
     Gf \eq{0} 
   & (\theta \!\circ\!\alpha)_X \cl{12}{\mu_X}
  \\  %2%
    Gf \sd{0}{4} \na{22}{(\theta \circ \beta)_f}
  & (\theta \!\circ \!\beta)_X \sd{2}{-4}
  \\  %3%
    (\theta \!\circ\!\beta)_Y
  & Hf
   }} 
$$
Recalling the definition of the vertical composition of pseudonatural transformations and that 
$\theta_X \ast \rho_X = \mu_X$, the previous equation becomes equation (1) below
$$
\vc{\xymatrix@C=0pt
    { %1%
     Gf \sd{-2}{4} \na{12}{\theta_f}
   & \theta_X \sd{2}{-4} 
   & \alpha_X \eq{0}
  \\  %2%
     \theta_Y \eq{0}
   & Ff \sd{-2}{4} \na{12}{\alpha_f}
   & \alpha_X \sd{2}{-4}
  \\  %3%
     \theta_Y \eq{0} 
   & \alpha_Y \cl{10}{\rho_Y}
   & Hf \eq{0}
 \\  %4%
     \theta_Y
   & \beta_Y
   & Hf 
   }}
\hs{10} \meq{(1)} \hs{10}
\vc{\xymatrix@C=0pt
    { %1%
     Gf \eq{0}
   & \theta_X \eq{0}
   & \alpha_X \cl{10}{\rho_X}
  \\  %2%
     Gf \sd{-2}{4} \na{12}{\theta_f}
   & \theta_X \sd{2}{-4} 
   & \beta_X \eq{0}
  \\  %3%
     \theta_Y \eq{0}
   & Ff \sd{-2}{4} \na{12}{\beta_f}
   & \beta_X \sd{2}{-4}
 \\  %4%
     \theta_Y
   & \beta_Y
   & Hf 
   }}
\hs{10} \meq{(el)} \hs{10}
\vc{\xymatrix@C=0pt
    { %1%
     Gf \sd{-2}{4} \na{12}{\theta_f}
   & \theta_X \sd{2}{-4} 
   & \alpha_X \eq{0}
  \\  %2%
     \theta_Y \eq{0}
   & Ff \eq{0}
   & \alpha_X \cl{10}{\rho_X}
  \\  %3%
     \theta_Y \eq{0}
   & Ff \sd{-2}{4} \na{12}{\beta_f}
   & \beta_X \sd{2}{-4}
 \\  %4%
     \theta_Y
   & \beta_Y
   & Hf 
   }}
$$
Since $\theta_f \ast \alpha_X$ is invertible and $(\theta_Y)_\ast$ is faithful, it follows that equation {\bf M.} holds for $\rho$.
\end{proof}

\vspace{1ex}

As far as we know, unlike the case of equivalences, quasiequivalences may fail to be pointwise in general.

\subsection{Transfer of laxfunctor structure.}

\begin{definition} \label{transferdatum}
A transfer datum $F \Mrl{\theta} G$, in any of the four categories in diagram (\ref{eq:42-cat}), is a  pseudo transformation such that  
\mbox{$\cc{D}(W,FX) \Xr{{(\theta_X \!)}_{\!*}} \cc{D}(W,GX)$} 
%($\cc{D}(GX,W) \Xr{{(\theta_X \!)}^{\!*}} \cc{D}(FX,W)$) 
is full and \mbox{faithful} for each $W$ in the image of $F$, and all $X$ in 
$\cc{B}$. Note that if the $\theta_X$ are pointwise quasiequivalences or equivalences, $\theta$ is a  transfer datum.
\end{definition}

\begin{theorem}[\bf{Backward transfer}] \label{thetheorem}
${}$

1. Let $F$, $G$ be preassignments and $F \Mrl{\theta} G$ be a transfer datum in $pre\cc{A}ssg$, 
%then the structures can be transferred backwards from $G$ to $F$:
if $G$ has an assignment structure, then $F$ can be uniquely furnished with an assignment structure in such a way that 
$\theta$ becomes a transfer datum in $\cc{A}ssg$.

2. Let $F$, $G$ be assignments and $F \Mrl{\theta} G$ be a transfer datum in $\cc{A}ssg$, if G has a lax or a pseudo functor structure, then $F$ can be uniquely furnished with (respectively) a lax or a pseudo functor structure, in such a way that $\theta$ becomes a pseudo transformation.

3. Remark that from 1. and 2. it follows that given preassignments $F$, $G$ \mbox{and a transfer} datum $F \Mr{\theta} G$, a lax or pseudo functor structure on $G$ can be transferred backwards to $F$. 
\end{theorem}
Before proving Theorem \ref{thetheorem}, we state an equivalent dual Theorem \ref{theoptheorem}, and we interpret these results in the context of equi-fibrations in the sense of Lack \cite{Lack2Mod}.
We let the reader check the following which holds by the very definitions involved.
\begin{proposition} \label{yoga}
Let $\cc{X}(\cc{B}, \cc{D})$ be any of the 2-categories appearing in \eqref{eq:42-cat}. Then:
 $$
 \cc{X}(\cc{B}, \cc{D})^{op} = 
 \cc{X}_{op}(\cc{B}^{op},\cc{D}^{op})
 \hspace{2ex} and \hspace{2ex}
 \cc{X}(\cc{B}, \cc{D})(F, G) = 
 \cc{X}_{op}(\cc{B}^{op},\cc{D}^{op})
                             (\overline{G}, \overline{F}),
 \hspace{2ex} where:                             
 $$ 
\hspace{3ex} 1. $\cc{X}_{op}$ denotes the respective 2-categories with optransformations instead of transformations (that is, the 2-cells $\theta_f$ go in the opposite direction, see Definition \ref{PN}).

2. Given $\cc{B} \mr{F} \cc{D}$ in 
{$\cc{X}(\cc{B}, \cc{D})$}, we denote 
$\cc{B}^{op} \mr{\overline{F}} \cc{D}^{op}$  
the corresponding object in the dual category 
$\cc{X}(\cc{B}, \cc{D})^{op}$. Given a lax transformation (optransformation)  $F \Mrl{\theta} G$, we have a lax op-transformation (transformation) $\overline{G} \Mrl{\theta} \overline{F}$
\end{proposition}

\begin{definition} \label{optransferdatum}
An optransfer datum $F \Mrl{\theta} G$ in any of the four categories in Diagram (\ref{eq:42-cat}), is a  pseudo optransformation such that  
%$\cc{D}(W,FX) \Xr{{(\theta_X \!)}_{\!*}} \cc{D}(W,GX)$ 
$\cc{D}(GX,W) \Xr{{(\theta_X \!)}^{\!*}} \cc{D}(FX,W)$ 
is full and faithful for each $W$ in the image of $F$, and all $X$ in 
$\cc{B}$. Note that if the $\theta_X$ are pointwise quasiequivalences or equivalences, then $\theta$ is a  optransfer datum.
\end{definition}

\begin{proposition} \label{duality}
With the notations in Proposition \ref{yoga}, $F \Mrl{\theta} G$ is a \mbox{optransfer} datum if and only if 
$\overline{G} \Mrl{\theta} \overline{F}$ is a transfer datum.
\end{proposition}

It follows that we have a theorem equivalent to Theorem \ref{thetheorem}:
\begin{theorem} [{\bf Forward transfer}] \label{theoptheorem}
Let $F$, $G$ be preassignments and $F \Mrl{\theta} G$ be an optransfer datum, then the structures can be transferred forward from $F$ to $G$. We let the reader specify the three items corresponding to the three items in Theorem \ref{thetheorem}.
\end{theorem}  

\vspace{1ex}

Given a transfer datum $F \Mrl{\theta} |G'|$ for any of the forgetful 2-functors in the diagram \eqref{eq:42-cat}, Theorem \ref{thetheorem} means that there is a unique 
$F'$ and $F' \Mrl{\theta'} G'$ such that $F = |F'|$ and 
$|\theta'| = \theta$. 
{This is reminiscent of the concept of fibration}, and we have a dual statement for an optransfer datum $|F'| \Mrl{\theta} G$ {and opfibrations}.

\vspace{1ex}

In fact, since equivalences in the four 2-categories in the diagram \eqref{eq:42-cat} are both transfer and optransfer data, and recalling item 3. in \ref{equivalencias}, from Theorems \ref{thetheorem} and \ref{theoptheorem} it follows:

\begin{theorem} \label{fibrations}
The forgetful 2-functors in the diagram \eqref{eq:42-cat} are equi-fibrations and equi-opfibrations 
$\cc{X} \mr{} \cc{Z}$ in the sense of Lack
\cite[\S 2, p.188]{Lack2Mod} (see also
\cite{nL}). Note that the second condition in the definition of equi-fibration holds automatically in this case, because the forgetful 2-functors are fully faithful on 2-cells.
\end{theorem} 

\begin{remark}
Given two bicategories $\cc{X}, \;\cc{Z}$ with  distinguished classes of arrows $\Psi \subset \cc{X}$, $\Omega \subset \cc{Z}$, there is a natural definition of 
$(\Omega$,$\Psi)$-fibration (and 
$(\Omega$,$\Psi)$-opfibration), 
{in which arrows of $\Omega$ are lifted to arrows of $\Psi$,
equivalent to} the notion of $equi$-$fibration$ when $\Psi = \Omega = Equivalences$.  
\end{remark} 

{With this definition, the considerations above Theorem \ref{fibrations} show that} the forgetful \mbox{2-functors} in \eqref{eq:42-cat} are $(\Omega$,$\Psi)$-fibrations (resp. opfibrations) for the classes 
$\Omega$ and $\Psi$ of transfer data (resp. optransfer data).
Finally, pointwise quasi-equivalences are also transfer and optransfer data, so Theorem \ref{fibrations} also holds for 
$(\Omega$,$\Psi)$-fibration and opfibrations with $\Psi = \Omega = Pointwise \; quasiequivalences$.
\vspace{2ex}

\noindent
{\bf Proof of Theorem \ref{thetheorem}}. 

1.  we define a functor
$
\cc{B}(X, Y) \mr{\widetilde{G}} \cc{D}(FX,\, GY)
$
by the formulas, for $X \mrr{f}{g} Y$

\vspace{1ex}

\noindent and  $f \Mr{\alpha} g$ in $\cc{B}$:
$
\; \widetilde{G}{f} = \theta_Y \circ Ff, \;\;
\widetilde{G}{g} = \theta_Y \circ Fg \;\; 
\; and \;\;\;  
\widetilde{G}\alpha = 
\theta_g \circ (G\alpha \ast \theta_X) \circ \theta_f^{-1}.
$ 
The functoriality of 
$\widetilde{G}$ follows from that of 
${G}$. 
Consider the diagram
\begin{equation} \label{Gtilde}
\xymatrix
   {
    & \cc{D}(FX, FY) \ar[rd]^{(\theta_Y)_\ast} 
   \\
    \cc{B}(X, Y) \ar[rr]^{\widetilde{G}}
                 \ar@{-->}[ru]^F
    && \cc{D}(FX, GY)
   }
\end{equation}
Since $(\theta_Y)_\ast$ is full and faithful it follows there is a unique $\beta$ such that $(\theta_Y)_\ast(\beta) = \widetilde{G}\alpha$. This determines a factorization $F$ of 
$\widetilde{G}$ as indicated in the diagram, 
$F\alpha = \beta$. The functoriality of $F$ follows from that of $\widetilde{G}$. Finally, the equation 
 $(\theta_Y)_\ast(F\alpha) = \widetilde{G}\alpha$ is equivalent to axiom {\bf LN0} in Definition \ref{psntd}, which shows that $\theta$ becomes an assignment transformation.
 
\vspace{1ex}
 
\hspace{2ex} 2. 
\emph{Definition of the unit for $F$}. Consider
$\cc{D}(FX, FX) \xr{(\theta_X)_\ast} \cc{D}(FX, GX)$. From the right hand elevator below, we see that there is a unique 
$id_{FX} \Xr{\xi_X} F(id_X)$ such that the equation holds:
$$
\vc{\xymatrix@C=0pt                 %1%
    {  %1%
     \theta_X \eq{0} & {} \cl{-4}{\xi}
   \\  %2% 
     \theta_X & Fid_X
   }}
\hs{10} \meq{(\xi)} \hs{10}
\vc{\xymatrix@C=0pt                %2%
    {  %1%
     {} \cl{-4}{\xi} & \theta_X \eq{0}
  \\  %2%
     Gid_X \sd{-6}{6} \na{16}{\theta_{id_X}} 
    & \theta_X \sd{0}{-6}
  \\  %3%
     \theta_X & Fid_X
   }}
$$
This equation is clearly equivalent to axiom {\bf LN1} in Definition \ref{PN}, and if the structural 2-cell $\xi$ of $G$ is invertible then so will be the one of $F$.  

\vspace{1ex}

\emph{Definition of the multiplication for $F$}. Consider 
$\cc{D}(FX, FZ) \xr{(\theta_Z)_\ast} \cc{D}(FX, GZ)$ and the right side elevator below. It follows there is a unique
$Fg \ast Ff \Xr{\phi_{f,g}} F(g \ast f)$ such that the equation holds: 
$$
\vc{\xymatrix@C=0pt              %1%
    { %1%
   \theta_Z \eq{0} 
   & Fg \sd{-2}{6} \na{12}{\phi}
   & Ff \sd{2}{-6}
  \\  %2%
   \theta_Z 
   & {}
   & \hs{-20} F(gf) 
  }}
\hs{10} \meq{(\phi)} \hs{10}
\vc{\xymatrix@C=0pt
    { %1%
     \theta_Z \sd{-2}{4} \na{12}{\theta_g^{-1}}
   & Fg \sd{2}{-4} 
   & Ff \eq{0}
  \\  %2%
     Gg \eq{0}
   & \theta_Y \sd{-2}{4} \na{12}{\theta_f^{-1}}
   & Ff \sd{2}{-4}
  \\  %3%
     Gg \sd{0}{6} \na{12}{\phi}
   & Gf \sd{2}{-4}
   & \theta_X \eq{0}
 \\  %4%
     {}
   & \hs{-15} G(gf) \sd{-13}{6}
   & \theta_X \sd{0}{-6} \na{-17}{\theta_{gf}}
 \\  %5%
     {}
   & \hs{-15} \theta_Z 
   & \hs{-5} F(gf)
   }}
$$
This equation is clearly equivalent to axiom {\bf LN2} in Definition \ref{PN}, and if the structural 2-cell $\phi$ of $G$ is invertible, so is that of $F$.

\vspace{1ex}

We have shown that there is a unique possible way of furnishing $F$ with a unit and a multiplication in such a way that the arrows 
$\theta_X$ and the 2-cells $\theta_f$ form a pseudo transformation.
Now we will prove that this is indeed a lax functor structure for $F$, that is, we will show the axioms in Definition \ref{PF}. We will prove that the equations of 
2-cells hold after composing with an arrow of the form 
$\theta_X$, and use the faithfulness of $(\theta_X)_\ast$.

\vspace{1ex}

\emph{Proof of Axiom LF1}. 
Consider
$\cc{D}(FX, FY) \xr{(\theta_Y)_\ast} \cc{D}(FX, GY)$,
$$
\vc{\xymatrix@C=0pt                      %1%
    { %1%
     \theta_Y \eq{0}
   & Ff \eq{0}
   & {} \cl{-4}{\xi}
  \\  %2%
     \theta_Y \eq{0} 
   & Ff \sd{-2}{4} \na{12}{\phi} 
   & Fid_X \sd{2}{-4}
  \\  %3%
     \theta_Y
   & {}
   & \hs{-30}{Ff} 
   }}
\hs{10} \meq{(\phi)} \hs{10}             %2% 
\vc{\xymatrix@C=0pt
    { %1%
     \theta_Y \eq{0}
   & Ff \eq{0}
   & {} \cl{-4}{\xi}
  \\  %2% 
     \theta_Y \sd{-2}{4} \na{15}{\theta_f^{-1}}
   & Ff \sd{2}{-4} 
   & Fid_X \eq{0}
  \\  %3%
     Gf \eq{0}
   & \theta_X \sd{-2}{4} \na{16}{\theta_{id_X}^{-1}}
   & Fid_X \sd{0}{-4}
  \\  %4%
     Gf \sd{0}{6} \na{12}{\phi}
   & Gid_X \sd{0}{-6}
   & \theta_X \eq{0}
  \\  %5%
     {}
   & \hs{-25} Gf \sd{-12}{8} \na{10}{\theta_f}
   & \hs{0} \theta_X \sd{-2}{-8}
  \\  %6%
     {}
   & \hs{0} \theta_Y
   & \hs{-15} Ff
   }}
\hs{10} \meq{(el)} \hs{10}
\vc{\xymatrix@C=0pt                      %3%
    { %1%
      \theta_Y \sd{-2}{4} \na{15}{\theta_f^{-1}} 
    & Ff \sd{2}{-4}
    & {} \eq{0}
   \\  %2%
      Gf \eq{0}
    & \theta_X \eq{0}
    & {} \cl{-4}{\xi}
   \\  %3%
         Gf \eq{0}
   & \theta_X \sd{-2}{4} \na{16}{\theta_{id_X}^{-1}}
   & Fid_X \sd{0}{-4}
  \\  %4%
     Gf \sd{0}{6} \na{12}{\xi}
   & Gid_X \sd{0}{-6}
   & \theta_X \eq{0}
  \\  %5%
     {}
   & \hs{-25} Gf \sd{-12}{8} \na{10}{\theta_f}
   & \hs{0} \theta_X \sd{-2}{-8}
  \\  %6%
     {}
   & \hs{0} \theta_Y
   & \hs{-15} Ff
   }}  
\hs{10} \meq{(\xi)} 
$$
$$
\meq{(\xi)} \hs{10}
\vc{\xymatrix@C=0pt                      %4%
    { %1%
     {}  %-18
   & \cl{18}{\theta_f^{-1}} \hs{-35}\theta_Y 
   & \hs{-10} Ff
  \\  %2%
     Gf \eq{0} \sd{20}{-5} \na{27}{\xi} 
   & {} %\cl{-4}{\xi}
   & \theta_X \eq{0} \sd{-20}{5}
  \\ %3%
     Gf \eq{0} 
   & Gid_X \sd{-4}{6} \na{17}{\theta_{id_X}}
   & \theta_X \sd{0}{-6}
  \\ %4%
     Gf \eq{0}
   & \theta_X \sd{-4}{6} \na{17}{\theta_{id_X^{-1}}}
   & Fid_X \sd{0}{-6}
  \\ %5%
     Gf \sd{0}{6} \na{15}{\xi}
   & Gid_X \sd{2}{-6}
   & \theta_X \eq{0}
  \\ %6%
     {}
   & \hs{-20} Gf \sd{-10}{6} \na{13}{\theta_f}
   & \theta_X \sd{0}{-6}
  \\ %7%
     {}
   & \hs{-20} \theta_Y
   & Ff 
   }}
\hs{10} \meq{} \hs{10}
\vc{\xymatrix@C=0pt                      %5%
    { %1%
     {}  %-18
   & \cl{18}{\theta_f^{-1}} \hs{-35}\theta_Y 
   & \hs{-10} Ff
  \\  %2%
     Gf \eq{0} \sd{20}{-5} \na{27}{\xi} 
   & {} %\cl{-4}{\xi}
   & \theta_X \eq{0} \sd{-20}{5}
  \\ %3%
     Gf \sd{0}{6} \na{15}{\xi}
   & Gid_X \sd{2}{-6}
   & \theta_X \eq{0}
  \\ %4%
     {}
   & \hs{-20} Gf \sd{-10}{6} \na{10}{\theta_f}
   & \theta_X \sd{0}{-6}
  \\ %5%
     {}
   & \hs{-20} \theta_Y
   & Ff 
   }}
\hs{10} \meq{LF1} \hs{10}
\vc{\xymatrix@C=0pt                      %6%
    { %1%
     \theta_Y \sd{-2}{4}\na{12}{\theta_f^{-1}}  
   & Ff \sd{2}{-4}
  \\ %2%
     Gf \eq{0}
   & \theta_X \eq{0}
  \\ %3%
     Gf \sd{-2}{4} \na{12}{\theta_f}
   & \theta_X \sd{2}{-4}
  \\ %4%
    \theta_Y
  & Ff  
   }}
\hs{10} \meq{} \hs{10}
\vc{\xymatrix@C=0pt                      %7%
    { 
     \theta_Y \eq{0} & Ff \eq{0}
    \\
     \theta_Y & Ff 
   }}                     
$$ 

\emph{Proof of Axiom LF2}. The proof is analogous to the one of LF1.

\vspace{1ex}

\emph{Proof of Axiom LF3}.
Consider 
$\cc{D}(FX, FW) \xr{(\theta_W)_\ast} \cc{D}(FX, GW)$,
$$
\vc{\xymatrix@C=-2pt                      %1%
    {  %1% 
     \theta_W \eq{0} 
   & Fh \eq{0}  
   & Fg \sd{-2}{8} \na{13}{\phi}
   & Ff \sd{2}{-8}
  \\  %2%       
    \theta_W \eq{0}
   & Fh \sd{2}{10}   
   & {} \na{-2}{\phi} 
   & \hs{-27} F(gf) \sd{-8}{-10} 
  \\  %3%
    \theta_W  
   & {}
   & \hs{-2} F(hgf)
   & {}      
   }}
\hs{10} \meq{(\phi)} \hs{10}           %2% 
\vc{\xymatrix@C=-2pt
    {  %1%
     \theta_W \eq{0} 
   & Fh \eq{0}  
   & Fg \sd{-2}{8} \na{14}{\phi}
   & Ff \sd{2}{-6}
 \\  %2%
    \theta_W \sd{-2}{4} \na{14}{\theta_h^{-1}} 
   & Fh \sd{2}{-4}   
   & {} 
   & \hs{-20} F(gf) \eq{-10} 
 \\  %3%
     Gh \eq{0}
   & \theta_Z \sd{-2}{8} \na{23}{\theta_{gf}^{-1}}
   & {}
   & \hs{-20} F(gf) \sd{-8}{-8}
 \\  %4%
     Gh \sd{0}{12}%
   & {} \na{-4}{\phi}
   & \hs{-27} G(gf) \sd{-14}{-12}   & \hs{-27} \theta_X  \eq{-18} 
 \\  %5%
     {} 
   & G(hgf) \sd{-4}{6}
   & {} \na{-10}{\theta_{hgf}}
   & \hs{-27} \theta_X \sd{-16}{-6}
 \\  %6%
    {}                  %reemplazo para no usar \hs positivo
  & \hs{8} \theta_W     %{} 
  & \hs{-4} F(hgf)        % \hs{-45} \theta_W
  & {}                  % \hs{-40} Fhgf                    
   }}
\hs{10} \meq{(el)} \hs{10}               
\vc{\xymatrix@C=-2pt                      %3%
    {  %1%
     \theta_W \sd{-2}{4} \na{14}{\theta_h^{-1}}
   & Fh \sd{2}{-4}
   & Fg \eq{0}
   & Ff \eq{0}
 \\  %2%
     Gh \eq{0}
   & \theta_Z \eq{0}       
   & Fg \sd{-2}{8} \na{14}{\phi}
   & Ff \sd{2}{-6}
 \\  %3%
     Gh \eq{0}
   & \theta_Z \sd{-2}{8} \na{23}{\theta_{gf}^{-1}}
   & {}
   & \hs{-20} F(gf) \sd{-8}{-8}
 \\       					%4%
     Gh \sd{0}{12}
   & {} \na{-4}{\phi}
   & \hs{-27} G(gf) \sd{-14}{-12}   & \hs{-27} \theta_X  \eq{-18} 
 \\  %5%
     {} 
   & G(hgf) \sd{-4}{6}
   & {} \na{-10}{\theta_{hgf}}
   & \hs{-27} \theta_X \sd{-16}{-6}
 \\  %6%
    {}
  & \hs{8} \theta_W
  & \hs{-4} F(hgf)
  & {}
   }}
\hs{10} \meq{(\phi)}                 
$$
$$
\meq{(\phi)} \hs{10}                 
\vc{\xymatrix@R=1.5pc@C=-2pt                   %4%
    { %1%
     \theta_W \sd{-2}{4} \na{14}{\theta_h^{-1}}
   & Fh \sd{2}{-4}
   & Fg \eq{0}
   & Ff \eq{0}
 \\  %2%
     Gh \eq{0}
   & \theta_Z \sd{-2}{4} \na{14}{\theta_g^{-1}}       
   & Fg \sd{-2}{-4} 
   & Ff \eq{0} 
 \\  %3%
     Gh \eq{0}
   & Gg \eq{0}
   & \theta_Y \sd{-2}{4} \na{14}{\theta_f^{-1}}
   & Ff \sd{2}{-4}
 \\  %4%
     Gh \eq{0}
   & Gg \sd{2}{8} \na{16}{\phi}
   & Gf \sd{2}{-6}
   & \theta_X \eq{0}
 \\  %5%
     Gh \eq{0}
   & {}
   & \hs{-20} G(gf) \sd{-13}{6}
   & \theta_X \sd{0}{-6} \na{-18}{\theta_{gf}}
 \\  %6%
     Gh \eq{0}
   & {}
   & \hs{-15} \theta_Z \sd{-8}{4} \na{8}{\theta_{gf}^{-1}}
   & \hs{-5} F(gf)  \sd{0}{-4}
 \\  %7%
     Gh \sd{0}{12}
   & {} \na{-4}{\phi}
   & \hs{-20} G(gf) \sd{-14}{-12}   
   & \hs{-20} \theta_X  \eq{-10} 
 \\  %8%
     {} 
   & G(hgf) \sd{-4}{6}
   & {} \na{-10}{\theta_{hgf}}
   & \hs{-20} \theta_X \sd{-12}{-6}
 \\  %9%
    {}
  & \hs{8} \theta_W
  & \hs{-4} F(hgf)  
  & {}
   }}
\hs{10} \meq{} \hs{10}                 %5%
\vc{\xymatrix@C=-2pt
    { %1%
     \theta_W \sd{-2}{4} \na{14}{\theta_h^{-1}}
   & Fh \sd{2}{-4}
   & Fg \eq{0}
   & Ff \eq{0}
 \\  %2%
     Gh \eq{0}
   & \theta_Z \sd{-2}{4} \na{14}{\theta_g^{-1}}       
   & Fg \sd{-2}{-4} 
   & Ff \eq{0} 
 \\  %3%
     Gh \eq{0}
   & Gg \eq{0}
   & \theta_Y \sd{-2}{4} \na{14}{\theta_f^{-1}}
   & Ff \sd{2}{-4}
 \\  %4%
     Gh \eq{0}
   & Gg \sd{0}{6} \na{16}{\phi}
   & Gf \sd{2}{-4}
   & \hs{-4}\theta_X \eq{-4}
 \\  %5%
     Gh \sd{0}{12}
   & {} \na{-4}{\phi}
   & \hs{-25} G(gf) \sd{-14}{-12}   
   & \hs{-4} \theta_X  \eq{-4} 
 \\  %6%
     {} 
   & \hs{-5} G(hgf) \sd{-4}{6}
   & {} \na{-10}{\theta_{hgf}}
   & \hs{-4} \theta_X \sd{-6}{-8}
 \\  %7%
    {}
  & \hs{8} \theta_W
  & \hs{-4} F(hgf)  
  & {}
   }}
\hs{10} \meq{LF3} \hs{10}          %6%
\vc{\xymatrix@C=0pt
    { %1%
     \theta_W \sd{-2}{4} \na{14}{\theta_h^{-1}}
   & Fh \sd{2}{-4}
   & Fg \eq{0}
   & Ff \eq{0}
 \\  %2%
     Gh \eq{0}
   & \theta_Z \sd{-2}{4} \na{14}{\theta_g^{-1}}       
   & Fg \sd{-2}{-4} 
   & Ff \eq{0} 
 \\  %3%
     Gh \eq{0}
   & Gg \eq{0}
   & \theta_Y \sd{-2}{4} \na{14}{\theta_f^{-1}}
   & Ff \sd{2}{-4}
 \\  %4%
     Gh \sd{0}{6} \na{12}{\phi}
   & Gg \sd{0}{-6} 
   & \hs{-10}Gf \eq{-4}
   & \hs{-4}\theta_X \eq{-2}
 \\  %5%
     {} %\sd{10}{6} 
   & \hs{-25} G(hg) \sd{-16}{6} \na{6}{\phi}
   & \hs{-10} Gf \sd{0}{-6}   
   & \hs{-4} \theta_X \eq{-2}  
 \\  %6%
     {} 
   & {}
   & \hs{-40} G(hgf) \sd{-28}{8}\na{-4}{\theta_{hgf}}
   & \hs{-4} \theta_X \sd{-4}{-8}
 \\  %7%
    {}
  & \hs{8} \theta_W
  & {} 
  & \hs{-30} F(hgf)
   }}
\hs{10} \meq{(el)}          
$$
$$
\meq{(el)} \hs{10}          
\vc{\xymatrix@C=0pt       %7%
    { %1%
     \theta_W \sd{-2}{4} \na{14}{\theta_h^{-1}}
   & Fh \sd{2}{-4}
   & Fg \eq{0}
   & Ff \eq{0}
 \\  %2%
     Gh \eq{0}
   & \theta_Z \sd{-2}{4} \na{12}{\theta_g^{-1}}       
   & Fg \sd{-2}{-4} 
   & Ff \eq{0} 
 \\  %3%
     Gh \sd{0}{6} \na{14}{\phi}
   & Gg \sd{2}{-6}
   & \theta_Y  \eq{0} 
   & Ff \eq{0} 
 \\  %4%
     {} 
   & \hs{-25} G(hg) \eq{-10}  
   & \hs{0}\theta_Y  \sd{-2}{4} \na{14}{\theta_f^{-1}}
   & \hs{-4} Ff \sd{2}{-4}
 \\  %5%
     {} 
   & \hs{-25} G(hg) \sd{-16}{6} \na{6}{\phi}
   & \hs{-10} Gf \sd{0}{-6}   
   & \hs{-4} \theta_X \eq{-2}  
 \\  %6%
     {} 
   & {}
   & \hs{-40} G(hgf) \sd{-28}{8}\na{-4}{\theta_{hgf}}
   & \hs{-4} \theta_X \sd{-4}{-8}
 \\  %7%
    {}
  & \hs{8} \theta_W
  & {} 
  & \hs{-30} F(hgf)
   }}
\hs{10} \meq{} \hs{10}          
\vc{\xymatrix@R=1.5pc@C=0pt                 %8%
    { %1%
     \theta_W \sd{-2}{4} \na{14}{\theta_h^{-1}}
   & Fh \sd{2}{-4}
   & Fg \eq{0}
   & Ff \eq{0}
 \\  %2%
     Gh \eq{0}
   & \theta_Z \sd{-2}{4} \na{14}{\theta_g^{-1}}       
   & Fg \sd{0}{-4} 
   & Ff \eq{0} 
 \\  %3%
     Gh \sd{0}{6} \na{14}{\phi}
   & Gg \sd{2}{-6}
   & \theta_Y \eq{0} 
   & Ff \eq{0}
 \\  %4%
     {}  \na{28}{\theta_{hg}}
   & \hs{-20} G(hg) \sd{-15}{6} 
   & \hs{0} \theta_Y \sd{0}{-6}
   & \hs{-4} Ff \eq{0}
 \\  %5%
     {}  \na{30}{\theta_{hg}^{-1}}
   & \hs{-20} \theta_W  \sd{-6}{-6} 
   & \hs{-10} F(hg) \sd{-8}{6}   
   & \hs{-4} Ff \eq{0} 
 \\  %6%
     {} 
   & \hs{-20} G(hg) \eq{-10}
   & \hs{0} \theta_Y \sd{-2}{4} \na{12}{\theta_f^{-1}}
   & \hs{-4} Ff \sd{0}{-4} 
 \\  %7%
     {} \na{32}{\phi}
   & \hs{-20} G(hg) \sd{-10}{6} 
   & \hs{0} Gf \sd{-2}{-6}
   & \hs{-4} \theta_X \eq{-4}     
 \\  %8%
     {} 
   & {}
   & \hs{-40} G(hgf) \sd{-28}{8}\na{-4}{\theta_{hgf}}
   & \hs{-4} \theta_X \sd{-4}{-8}
 \\  %9%
    {}
  & \hs{8} \theta_W
  & {} 
  & \hs{-30} F(hgf)
  }}
\hs{10} \meq{(\phi)} \hs{10}      
\vc{\xymatrix@C=0pt                    %9%
    { %1%
    \theta_W \eq{0}
  & Fh \sd{-2}{6} \na{12}{\phi}
  & Fg \sd{2}{-6} 
  & \hs{0}Ff \eq{-4}
 \\  %2%
    \theta_W \eq{0}
  &  {}  
  & \hs{-25} F(hg) \sd{-14}{6} \na{4}{\phi}
  & \hs{-10} Ff \sd{-2}{-6}   
 \\  %3%
    \theta_W 
  & {} 
  & {}
  & \hs{-40} F(hgf) 
  }} 
$$

\emph{Proof of the naturality $N\phi$.}  
Consider 
$\cc{D}(FX, FZ) \xr{(\theta_Z)_\ast} \cc{D}(FX, GZ)$,
$$
\vc{\xymatrix@C=0pt                 %1%
    { %1%
     \theta_Z \eq{0}
   & Fg \sd{-2}{4} \na{10}{\phi}
   & Ff \sd{2}{-4}
  \\  %2%
     \theta_Z \eq{0}
   & {}
   & \hs{-20} F(gf) \sd{-26}{4} \sd{6}{-4} 
                  \na{-10}{F(\beta\alpha)}
  \\  %3%
     \theta_Z
   & {}
   & \hs{-20} F(ts)
   }}
\hs{10} \meq{(\phi)} \hs{10}      %2%
\vc{\xymatrix@C=0pt
    { %1%
     \theta_Z \sd{-2}{4} \na{12}{\theta_g^{-1}}
   & Fg \sd{2}{-4} 
   & Ff \eq{0}
  \\  %2%
     Gg \eq{0}
   & \theta_Y \sd{-2}{4} \na{12}{\theta_f^{-1}}
   & Ff \sd{2}{-4}
  \\  %3%
     Gg \sd{0}{6} \na{12}{\phi}
   & Gf \sd{2}{-4}
   & \theta_X \eq{0}
 \\  %4%
     {}
   & \hs{-15} G(gf) \sd{-13}{6}
   & \theta_X \sd{0}{-6} \na{-15}{\theta_{gf}}
 \\  %5%
     {} 
   & \hs{-22} \theta_Z \eq{-10}
   & \hs{-20} F(gf) \sd{-26}{4} \sd{6}{-4} 
                  \na{-10}{F(\beta\alpha)}
  \\  %6%
     {} 
   & {} \hs{-22} \theta_Z
   & \hs{-20} F(ts)
   }}
\hs{10} \meq{LN0} \hs{10}      %3%
\vc{\xymatrix@C=0pt
    { %1%
     \theta_Z \sd{-2}{4} \na{12}{\theta_g^{-1}}
   & Fg \sd{2}{-4} 
   & Ff \eq{0}
  \\  %2%
     Gg \eq{0}
   & \theta_Y \sd{-2}{4} \na{12}{\theta_f^{-1}}
   & Ff \sd{2}{-4}
  \\  %3%
     Gg \sd{0}{6} \na{12}{\phi}
   & Gf \sd{2}{-4}
   & \theta_X \eq{0}
  \\  %4%
     {}
   & \hs{-15} G(gf) \sd{-24}{4} \sd{9}{-4} 
                  \na{-7}{G(\beta\alpha)}
   & \theta_X \eq{0}
  \\  %5%
     {}
   & \hs{-15} G(ts) \sd{-15}{4} \na{5}{\theta_{ts}}
   & \theta_X  \sd{2}{-4}
  \\  %6%
     {} 
   & {} \hs{-20} \theta_Z
   & \hs{-15} F(ts)
  }}
\hs{10} \meq{N\phi} \hs{10}      %4%
\vc{\xymatrix@C=0pt
    { %1%
     \theta_Z \sd{-2}{4} \na{12}{\theta_g^{-1}}
   & Fg \sd{2}{-4} 
   & Ff \eq{0}
  \\  %2%
     Gg \eq{0}
   & \theta_Y \sd{-2}{4} \na{12}{\theta_f^{-1}}
   & Ff \sd{2}{-4}
  \\  %3%
     Gg \cl{10}{G\beta}
   & Gf \cl{10}{G\alpha}
   & \theta_X \eq{0}
  \\  %4%
    Gt \sd{0}{6} \na{12}{\phi}
  & Gs \sd{2}{-4}
  & \theta_X \eq{0}
 \\  %5%
  {}
  & \hs{-15} G(ts) \sd{-15}{4} \na{5}{\theta{ts}}
  & \theta_X \sd{2}{-4}
 \\  %6%
    {} 
  & {} \hs{-20} \theta_Z
  & \hs{-15} F(ts)
   }}
\hs{10} \meq{(el)} 
$$
$$
\meq{(el)} \hs{10}             %5%
\vc{\xymatrix@C=0pt
    { %1%
     \theta_Z \sd{-2}{4} \na{12}{\theta_g^{-1}}
   & Fg \sd{2}{-4} 
   & Ff \eq{0}
  \\  %2%
     Gg \cl{10}{G\beta}
   & \theta_Y \eq{0}
   & Ff \eq{0}
  \\  %3%
     Gt \eq{0}
   & \theta_Y \sd{-2}{4} \na{12}{\theta_f^{-1}}
   & Ff \sd{2}{-4}
  \\  %4%
     Gg \eq{0}
   & Gf \cl{10}{G\alpha}
   & \theta_X \eq{0}
  \\  %5%
    Gt \sd{0}{6} \na{12}{\phi}
  & Gs \sd{2}{-4}
  & \theta_X \eq{0}
 \\  %6%
  {}
  & \hs{-15} G(ts) \sd{-15}{4} \na{5}{\theta{ts}}
  & \theta_X \sd{2}{-4}
 \\  %7%
    {} 
  & {} \hs{-20} \theta_Z
  & \hs{-15} F(ts)
   }}
\hs{10} \meq{LN0^{op}} \hs{10}             %6%
\vc{\xymatrix@C=0pt
    { %1%
     \theta_Z \eq{0} 
   & Fg \cl{10}{F\beta}
   & Ff \eq{0}
  \\  %2%
     \theta_Z \sd{-2}{4} \na{12}{\theta_t^{-1}}
   & Ft \sd{2}{-4} 
   & Ff \eq{0}
  \\  %3%
     Gt \eq{0} 
   & \theta_Y \eq{0}
   & Ff \cl{10}{F\alpha}
  \\  %4%
     Gt \eq{0}
   & \theta_Y \sd{-2}{4} \na{12}{\theta_s^{-1}}
   & Fs \sd{2}{-4}
  \\  %5%
    Gt \sd{0}{6} \na{12}{\phi}
  & Gs \sd{2}{-4}
  & \theta_X \eq{0}
 \\  %6%
  {}
  & \hs{-15} G(ts) \sd{-15}{4} \na{5}{\theta{ts}}
  & \theta_X \sd{2}{-4}
 \\  %7%
    {} 
  & {} \hs{-20} \theta_Z
  & \hs{-15} F(ts)
   }}
\hs{10} \meq{(el)} \hs{10}             %7%
\vc{\xymatrix@C=0pt
    { %1%
     \theta_Z \eq{0}
   & Fg \cl{10}{F\beta}
   & Ff \cl{10}{F\alpha}
  \\  %2%
     \theta_Z \sd{-2}{4} \na{12}{\theta_t^{-1}}
   & Ft \sd{2}{-4} 
   & Fs \eq{0}
  \\  %3%
     Gt \eq{0}
   & \theta_Y \sd{-2}{4} \na{12}{\theta_s^{-1}}
   & Fs \sd{2}{-4}
  \\  %4%
    Gt \sd{0}{6} \na{12}{\phi}
  & Gs \sd{2}{-4}
  & \theta_X \eq{0}
 \\  %5%
  {}
  & \hs{-15} G(ts) \sd{-15}{4} \na{5}{\theta{ts}}
  & \theta_X \sd{2}{-4}
 \\  %6%
    {} 
  & {} \hs{-20} \theta_Z
  & \hs{-15} F(ts)     
   }}
\hs{10} \meq{(\phi)} \hs{10}             %8%
\vc{\xymatrix@C=0pt
    { %1%
     \theta_Z \eq{0}
   & Fg \cl{10}{F\beta}
   & Ff \cl{10}{F\alpha}
  \\  %2%
     \theta_Z \eq{0}
   & Ft \sd{-2}{4} \na{10}{\phi}
   & Fs \sd{2}{-4}
  \\  %3%
      \theta_Z
   & {}
   & \hs{-20} F(ts)  
    }}
$$
\cqd

\end{appendix} 

\bibliographystyle{unsrt}

\end{document}